\newtheorem*{rep@theorem}{\rep@title}
\newcommand{\newreptheorem}[2]{%
\newenvironment{rep#1}[1]{%
 \def\rep@title{#2 \ref{##1}}%
 \begin{rep@theorem}}%
 {\end{rep@theorem}}}
\newcommand{\dl}{\langle\langle}
\newcommand{\dr}{\rangle\rangle}
\newcommand{\C}[1]{{\mathcal #1}}
\newcommand{\bracenom}{\genfrac{\llbracket}{\rrbracket}{0pt}{}}
\newtheorem{theorem}{Theorem}[section]
\newtheorem{lemma}[theorem]{Lemma}
\newtheorem{corollary}[theorem]{Corollary}
\newtheorem{proposition}[theorem]{Proposition}
\theoremstyle{definition}\newtheorem{definition}[theorem]{Definition}
\theoremstyle{remark}\newtheorem{remark}[theorem]{Remark}
\theoremstyle{definition}
\makeatletter\@addtoreset{case}{example}\makeatother
\theoremstyle{definition}
\begin{document}

\title{Bounded generation by root elements for Chevalley groups defined over rings of integers of function fields with an application in strong boundedness}

\author{Alexander A. Trost}
\address{Fakult\"{a}t f\"{u}r Mathematik, Ruhr Universit\"{a}t Bochum, D-44780 Bochum, Germany}
\email{Alexander.Trost@ruhr-uni-bochum.de}

\begin{abstract}
Bounded generation by root elements is a property which has been widely studied for various types of linear algebraic groups defined over rings of integers in algebraic number fields \cite{MR2357719,MR3892969,MR704220,MR1044049}. However, when considering global function fields, there are not many results beyond the treatment of two special cases due to Nica \cite{Nica} and Queen \cite{MR379441}. In this paper, we use model theoretic methods due to Carter, Keller and Paige written up by Morris \cite{MR2357719} to prove bounded generation by root elements for simply connected, split Chevalley groups defined over the ring of all integers in a global function field. We further apply this bounded generation result together with previous results from our paper \cite{General_strong_bound} to derive that the aforementioned Chevalley groups satisfy the strong boundedness property introduced by Kedra, Libman and Martin in \cite{KLM}.
\end{abstract}

\maketitle

\section{Introduction} 
\label{intro}

A group $G$ is called \textit{boundedly generated} if there are finitely many cyclic groups $Z_1,\dots,Z_L$ such that $G=Z_1\cdots Z_L.$ This property was introduced, among else, as a possible unified approach to proving the Congruence Subgroup Property of Arithmetic Groups as stated in the (now disproven) Rapinchuk-Conjecture \cite[Question~A]{corvaja2021nonvirtually} and has applications concerning the representation theory of the group in question. There are many papers proving this property for various classical, arithmetic groups like ${\rm SL}_n(\mathbb{Z})$ in a paper by Carter and Keller \cite{MR704220} or for more general arithmetic (twisted) Chevalley groups by Tavgen \cite{MR1044049}. For ${\rm SL}_2(R)$ for $R$ a ring of algebraic integers with infinitely many units there are papers by Carter, Keller and Paige written up by Morris \cite{MR2357719} and another one by Rapinchuk, Morgan and Sury \cite{MR3892969}. More recently, there was also a result for certain isotropic orthogonal groups over number fields \cite{MR2228948}. In case of classical arithmetic, matrix groups like ${\rm SL}_n$ or ${\rm Sp}_{2n}$, bounded generation results are usually proven by showing that each element of the group can be written as some bounded product of elementary matrices (or the corresponding generalization in the group, so called \textit{root elements}). This commonly referred to as \textit{bounded generation by root elements} or \textit{bounded elementary generation}.

However, essentially all of the aforementioned results concerning bounded generation by root elements are concerned with groups defined using algebraic number fields and there seem to be few comparable results for the case of global function fields besides the following two: First, there is a paper due to Nica stating the following:

\begin{theorem}\cite[Theorem~2]{Nica}\label{Nica_thm}
Let $\mathbb{F}$ be a finite field and $n\geq 3.$ Then ${\rm SL}_n(\mathbb{F}[T])$ is boundedly generated by root elements.
\end{theorem}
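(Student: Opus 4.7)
The plan is to mimic the classical Carter-Keller-Tavgen paradigm for $\mathrm{SL}_n$ over rings of integers and adapt it to the polynomial ring $\mathbb{F}[T]$. The argument naturally divides into a rank-reduction step, from general $n\geq 3$ down to $n=3$, followed by a careful treatment of the base case.

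For the rank-reduction, I would invoke a Tavgen-style principle: every elementary matrix $e_{ij}(a)\in\mathrm{SL}_n(\mathbb{F}[T])$ can be realised as a commutator of elementary matrices supported in embedded copies of $\mathrm{SL}_{n-1}$, so inductively bounded elementary generation of $\mathrm{SL}_3(\mathbb{F}[T])$ propagates to every higher rank with constants depending only linearly on $n$.

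For the base case $\mathrm{SL}_3(\mathbb{F}[T])$, the strategy is a column-by-column reduction of a given $M\in\mathrm{SL}_3(\mathbb{F}[T])$: first reduce the first column to $(1,0,0)^{T}$ by a bounded number of elementary row operations, then clear the first row by column operations, and finally dispose of the remaining $\mathrm{SL}_2$-block in the lower-right by using the additional elementary generators $e_{1j}(\cdot),e_{j1}(\cdot)$ with $j\in\{2,3\}$ that are available only in rank $\geq 3$. This last step crucially exploits $n\geq 3$, since $\mathrm{SL}_2(\mathbb{F}[T])$ alone is famously \emph{not} boundedly elementarily generated. The heart of the matter is thus the very first step: given a unimodular triple $(a,b,c)\in\mathbb{F}[T]^3$, produce $(1,0,0)$ via a number of elementary operations independent of $(a,b,c)$. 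The plan is to first invoke the stable range $2$ property of $\mathbb{F}[T]$ to pass in bounded steps to a unimodular pair $(a',b')$, and then to collapse this pair to $(1,0)$ using elementary operations in $\mathrm{SL}_3$.

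The main obstacle is precisely this final collapse. A naive Euclidean reduction of $(a',b')$ takes $O(\max(\deg a',\deg b'))$ steps, which is unbounded in the input. The critical arithmetic input is a function-field analogue of Dirichlet's theorem on primes in arithmetic progressions for $\mathbb{F}[T]$, yielding an irreducible polynomial of suitably controlled degree in the coset $a'+b'\mathbb{F}[T]$. Once $a'$ has been replaced, via a single elementary move, by such an irreducible $p$, the pair $(p,b')$ can be reduced to $(1,0)$ in a bounded number of further moves by exploiting the B\'ezout relation for $p$ and $b'$ together with the third coordinate as an auxiliary pivot, through a Suslin-type factorisation argument in $\mathrm{SL}_3$. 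A secondary technical nuisance is the finiteness of $\mathbb{F}^{\times}$, which restricts the available unit normalisations and must be tracked carefully so that the final bound depends only on $n$ and $|\mathbb{F}|$, and not on the input matrix $M$.
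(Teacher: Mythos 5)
Note first that the paper does not prove this statement at all: it is quoted verbatim from Nica, and the paper only remarks that Nica's proof is an adaptation of the Carter--Keller argument using that $\mathbb{F}_q[T]$ is a PID (the paper's own generalization, Theorem~\ref{main_thm}, is proven by an entirely different, model-theoretic route). Your proposal sits in the correct Carter--Keller lineage, and several of its steps are genuinely routine: since $\mathbb{F}[T]$ has stable range $2$, a unimodular triple $(a,b,c)$ collapses to $(1,0,0)$ in a \emph{bounded} number of moves (two moves to shorten to a unimodular pair $(a',b')$ with third coordinate $0$, then the B\'ezout/auxiliary-pivot trick: add $u R_1+v R_2$ with $ua'+vb'=1$ to the third row and clear), and the reduction from $n\geq 3$ to $n=3$ is standard --- although your formulation of it is off: writing each elementary matrix as a commutator of elementaries in smaller blocks does not propagate bounded generation, since one must decompose \emph{arbitrary} matrices; the actual Tavgen reduction (Proposition~\ref{Tavgen_reduction}) bootstraps decompositions of the form $E(\Phi,R)=(U^+U^-)^{L}$, and the Carter--Keller-style alternative uses stable range to peel off columns.

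The genuine gap is the final step. After the bounded column and row reductions you hold $\mathrm{diag}(1,M')$ with $M'\in {\rm SL}_2(\mathbb{F}[T])$, i.e.\ a unimodular pair together with its Mennicke class, and your proposed finishing move --- B\'ezout relation for $(p,b')$ plus the third coordinate as pivot plus ``Suslin-type factorisation'' --- only recycles the problem: the pivot trick collapses a \emph{column}, but it leaves behind another $2\times 2$ block carrying the same Mennicke symbol $\{b',a'\}$, so iterating it never terminates. Were your argument valid, it would prove bounded elementary generation of ${\rm SL}_3$ over every stable-range-$2$ ring with a Dirichlet theorem, and in particular would trivialize Carter--Keller for ${\rm SL}_3(\mathbb{Z})$; Suslin's factorization gives normality of the elementary subgroup and completion of unimodular rows, but carries no bound. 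Passing to an irreducible $p\in a'+b'\mathbb{F}[T]$ via Kornblum/Dirichlet (Theorem~\ref{Dirichlet_positive_char}) is indeed the right preparatory move, but by itself it does not kill the symbol: the missing heart of the proof is the arithmetic showing that $\bracenom{b'}{p}$ vanishes through a \emph{bounded} chain of Mennicke-symbol identities ${\rm MS}(1)$, ${\rm MS}(2)$ --- each such identity costs $O(1)$ elementary moves --- exploiting that $(\mathbb{F}_q[T]/p)^*$ is cyclic of order $q^{\deg p}-1$ with $\deg p$ controllable modulo prescribed integers. In Carter--Keller this is the power-residue-symbol computation; in Nica it is the PID/cyclic-residue-field analogue; and in the present paper it is precisely what is packaged into the first-order properties ${\rm Gen}(t,r)$ and ${\rm Exp}(t,l)$ (Lemma~\ref{gen_pos_char}, Proposition~\ref{positive_char_exp_prep}) before Theorem~\ref{finiteness_mennicken} and the compactness argument Theorem~\ref{compactness_bounded} are invoked. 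Without supplying this exponent/residue computation, your proposal establishes only the (easy) bounded reductions surrounding the difficulty, not the theorem itself.
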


Nica's result is essentially an adaptation of the result by Carter and Keller for bounded elementary generation in the number field case \cite{MR704220} and uses that $\mathbb{F}_q[T]$ is a principal ideal domain to avoid the more involved techniques employed by Carter and Keller.
Second, there is a partial result concerning certain localizations of the ring of all integers in global function fields, namely:

\begin{theorem}\cite[Theorem~2]{MR379441}
Let $K$ be a global function field that is a finite extension of $\mathbb{F}_q(T)$ with ring of integers $\C O_K$ and field of constants $\mathbb{F}_q.$ Further, let $S$ be a finite set of maximal ideals in $\C O_K$ such that $\gcd\{{\rm deg}(\mathfrak{P})\mid\mathfrak{P}\in S\}=1$ and such that $|S|\geq 2$. Also assume that the class number of $\C O_K$ as a Dedekind domain is coprime to $q-1$ or that $S$ contains an element of degree $1.$ Set 
\begin{equation*}
R:=\{a/b\mid a\in\C O_K,b\neq 0\text{ and }\{\text{prime divisors of }bR\}\subset S\}.
\end{equation*}
Then ${\rm SL}_2(R)$ is boundedly generated by root elements.
\end{theorem}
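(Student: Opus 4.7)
My proof plan follows the Carter--Keller--Paige strategy described in the introduction, adapted from number fields to the function field setting. The objective is to exhibit a uniform $L = L(K, S)$ such that every $M \in {\rm SL}_2(R)$ factors as a product of at most $L$ elementary matrices $E_{12}(r), E_{21}(r)$. Given $M = \begin{pmatrix} a & b \\ c & d \end{pmatrix}$, the standard matrix manipulations reduce the problem to the following core task: for any $a, c \in R$ with $aR + cR = R$, produce $r \in R$ with $a + rc \in R^*$, where $r$ itself decomposes as a bounded sum of ring elements each realizable in boundedly many elementary moves. Once the $(1,1)$-entry is made into a unit, a bounded number of further elementary operations return $M$ to the identity.

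The crucial substep is existence of such an $r$, and the hypotheses on $S$ and $\C O_K$ are engineered precisely for this. Since $R$ is the localization of $\C O_K$ at the complement of $S$, the $S$-unit group $R^*$ has rank $|S| - 1 \geq 1$, so there are infinitely many units. The condition $\gcd\{\deg(\F P) : \F P \in S\} = 1$ ensures that the valuation map $R^* \to \bigoplus_{\F P \in S} \mathbb{Z}$ has image of full rank in the natural codomain, so units are well distributed among residue classes modulo $cR$. The alternative on the class number (either coprimality with $q-1$, or the presence of a degree-one prime in $S$) guarantees that ideal classes do not obstruct the relevant coset from meeting $R^*$. Combining these, a strong approximation argument for $\mathbb{G}_m$ together with the function field Dirichlet theorem on primes in arithmetic progressions (available unconditionally in positive characteristic via Weil's theorem) furnishes $r$ with $a + rc \in R^*$.

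The main obstacle will be upgrading mere existence of $r$ to a uniform bound $L$ on the number of elementary factors needed to realize it. Here I would invoke the model theoretic method of Morris referenced in the introduction: encode bounded elementary generation as a first-order statement in the language of rings augmented by a predicate for the unit group, and pass to an ultraproduct over all valid pairs $(K, S)$. In the resulting pseudo-finite limit, class field theory and Dirichlet density become elementary facts, unifying the two alternative hypotheses into a single argument. The most delicate point is ensuring that $L$ does not degenerate as the class number $h_{\C O_K}$ or the degrees of primes in $S$ grow; this is where the coprimality of $h_{\C O_K}$ with $q-1$ plays its essential role, by making the reduction modulo $cR$ of the unit group interact cleanly with the class group of $R$, and where the dichotomy in the hypotheses collapses into a single workable bound.
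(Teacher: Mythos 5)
First, note what you are comparing against: the paper contains no proof of this statement at all --- it is quoted as background from Queen \cite{MR379441}, and the machinery the paper does develop (Mennicke symbols, Propositions~\ref{normality_a2} and \ref{sp_4_decomposition}) is specific to rank $\geq 2$. So your proposal must stand on its own, and it has a fatal gap at its declared core. The ``core task'' --- for every unimodular pair $(a,c)$ in $R$ find $r$ with $a+rc\in R^*$ --- is exactly the assertion that $R$ has stable range $1$, and this is false. Indeed $R$ is a Dedekind domain with infinitely many maximal ideals (only the finitely many primes in $S$ are inverted), while $R^*$ is a finitely generated group by the $S$-unit theorem, say generated by $\rho$ elements. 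Fix a prime $\ell$ and infinitely many maximal ideals of $R$ with residue field of order $\equiv 1 \bmod \ell$ (take $\ell\mid q-1$ if $q>2$, or $\ell=3$ and primes of even degree if $q=2$); choose $\rho+2$ of them, say $\F q_1,\dots,\F q_{\rho+2}$, and any nonzero $c\in \F q_1\cdots\F q_{\rho+2}$. By the Chinese Remainder Theorem, $(R/cR)^*$ surjects onto $\prod_i(R/\F q_i)^*$ and hence onto an elementary abelian $\ell$-group of rank $\rho+2$. The image of $R^*$ there is generated by at most $\rho$ elements, hence is a proper subgroup; choosing $a$ coprime to $c$ whose class maps outside this image gives a unimodular pair $(a,c)$ with $a+rc\notin R^*$ for every $r\in R$. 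This is not a repairable technicality: it is precisely why bounded generation of ${\rm SL}_2$ over arithmetic rings is hard, and why \cite{MR2357719} and \cite{MR3892969} work with Mennicke symbols and multiplicative arithmetic modulo a well-chosen prime rather than one-step unit production.

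Your model-theoretic patch does not rescue this, because it is circular as stated. The compactness mechanism (Theorem~\ref{compactness_bounded}) converts the statement ``every model of a fixed first-order theory $\C T$ has finite $C/E$'' into a uniform bound; the entire mathematical content is the model-by-model finiteness, which for ${\rm SL}_2$ is exactly the hard part and requires verifying, for all models, conditions like stable range $3/2$, ${\rm Gen}(t,r)$, ${\rm Exp}(t,l)$ and the rank-one-specific input of infinitely many units --- none of which your sketch establishes. Simply ``passing to an ultraproduct over all pairs $(K,S)$'' in which density theorems ``become elementary'' proves nothing, since bounded elementary generation is an infinite disjunction of first-order statements and is not reflected from an ultraproduct without that finiteness input. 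You also misattribute the role of the hypotheses: Queen's conditions ($\gcd$ of degrees equal to $1$, and the class number coprime to $q-1$ or a degree-one prime in $S$) feed a direct arithmetic argument producing division chains of uniformly bounded length in these particular rings; they cannot mean that ``units are well distributed among residue classes modulo $cR$,'' since the counterexample above shows the reduction $R^*\to (R/cR)^*$ is very far from surjective for suitable $c$. A viable route in the spirit of this paper would instead be to verify Morris' ${\rm SL}_2$ hypotheses for these localizations (note $R^*$ is infinite here because $|S|\geq 2$), parallel to what Section~\ref{proof_main} does for rank $\geq 2$; your proposal gestures at this but does not carry out any of the required verifications.
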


One of the main goals of this preprint is to generalize such results to all rings of integers in global function fields: 

\begin{theorem}\label{main_thm}
Let $K$ be a global function field that is a finite extension of $\mathbb{F}_q(T)$ with ring of integers $R:=\C O_K$ and field of constants $\mathbb{F}_q.$ Further, let $\Phi$ be an irreducible root system of rank at least $2.$ Then $G(\Phi,R)$ is boundedly generated by root elements. More precisely, for $k:=[K:\mathbb{F}_q(T)],$ there is a constant $L(q,k)$ only depending on $q$ and $k$ and not on $K$ such that each element of $G(\Phi,R)$ can be written as a product of $L(q,k)|\Phi|$ many root elements. 
\end{theorem}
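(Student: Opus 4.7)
My plan is to combine two ingredients: (i) Tavgen's rank reduction theorem \cite{MR1044049}, which for irreducible root systems of rank $\geq 2$ reduces bounded generation of $G(\Phi,R)$ to uniformly bounded root-element generation for the rank-$2$ simply connected Chevalley subgroups $G(\Psi,R)$ with $\Psi \in \{A_2, B_2, G_2\}$, yielding a final bound proportional to $|\Phi|$; and (ii) the model-theoretic technology of Carter--Keller--Paige as written up by Morris \cite{MR2357719}, to handle those rank-$2$ groups uniformly across all $\mathcal{O}_K$ with $[K:\mathbb{F}_q(T)] \leq k$.

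\textbf{Step 1: Rank reduction.} Tavgen's theorem applied to $\Phi$ of rank $\geq 2$ reduces the problem to a finite list of rank-$2$ cases: essentially $\mathrm{SL}_3$, $\mathrm{Sp}_4$, and the split $G_2$-group. The proof proceeds by repeated use of Chevalley commutator identities between root subgroups of overlapping rank-$2$ subsystems, and a careful count shows the contribution to the overall length is linear in $|\Phi|$. Notably, this reduction does \emph{not} require bounded generation for $\mathrm{SL}_2(\mathcal{O}_K)$, which in fact fails already for $\mathcal{O}_K = \mathbb{F}_q[T]$ by Nagao's amalgamated product structure, so the finer rank-$2$ geometry is indispensable.

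\textbf{Step 2: Model theory.} For each fixed rank-$2$ root system $\Psi$, I would express ``every element of $G(\Psi,\mathcal{O}_K)$ is a product of at most $L$ root elements'' as a first-order scheme in the language of rings, parameterized by $q$ and by the degree $k$. If no uniform $L(q,k)$ exists, then for every $L$ there is a field $K_L$ of degree $\leq k$ over $\mathbb{F}_q(T)$ producing a counterexample; an ultraproduct $R^* = \prod_L \mathcal{O}_{K_L}/\mathcal U$ then inherits the first-order features of the $\mathcal{O}_K$ -- it is an integral domain whose residue fields at prime ideals are pseudo-finite of characteristic $p$, it satisfies strong approximation at each prime, and its class group structure is controllable. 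Inside $R^*$ one reruns the CKP/Morris argument in a function-field adapted form to produce an explicit bounded factorization of the offending element, giving the required contradiction and, by compactness, the uniform bound $L(q,k)$.

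\textbf{Main obstacle.} The critical technical difficulty is adapting the unit-theoretic step of CKP/Morris. In the number field case the Dirichlet unit theorem provides a rich supply of units with prescribed congruences to clear denominators and engineer the required conjugations; over a function field with a single place at infinity, however, $\mathcal{O}_K^{\times}=\mathbb{F}_q^{\times}$ is finite. A workable substitute must be built from prime elements with controlled reduction supplied by the Chebotarev density theorem, strong approximation in the adelic group, and bounds on the class number of $\mathcal{O}_K$. Ensuring that these substitutes can be phrased first-order and are inherited by $R^*$, uniformly in $q$ and $k$, is where I expect essentially all of the work to lie.
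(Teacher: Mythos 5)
Your overall architecture---Tavgen's rank reduction down to rank-$2$ base cases, plus a compactness/ultraproduct argument rerunning the Carter--Keller--Paige machinery with a function-field substitute for the number-field input---is exactly the paper's strategy (the paper phrases the compactness step via Theorem~\ref{compactness_bounded} rather than an explicit ultraproduct, which is the same argument in dual form), and your observation that the reduction must not pass through ${\rm SL}_2$ because of Nagao is correct. Your diagnosis of the ``main obstacle'' is close but slightly misaimed: since the rank is at least $2$, the relevant part of Morris' paper is the Mennicke-symbol argument for ${\rm SL}_3$, where the infinitude of units plays no role; what actually needs replacing is the verification of the exponent property ${\rm Exp}(t,l)$. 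The paper does this in Proposition~\ref{positive_char_exp_prep} by a clean application of Kornblum's function-field Dirichlet theorem (Theorem~\ref{Dirichlet_positive_char}) with congruence conditions on ${\rm ord}_{\C P}$ at the places over infinity, which via the degree formula (Lemma~\ref{degree_func}) controls $e(f'R)=q^{{\rm deg}(f'R)}-1$ modulo prime powers and yields ${\rm Exp}(2\cdot(q-1)!^{\lceil k\log_2(q)\rceil},2)$; no Chebotarev, strong approximation, or class-number input is needed, and this is also where the dependence on $q$ and $k$ (and only on these) enters.

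There are, however, three genuine gaps. First and most seriously, your Step~2 assumes the CKP/Morris argument ``reruns'' uniformly for all three rank-$2$ cases, but Morris' Mennicke-symbol machinery is specific to $A_n$: for $C_2$ the natural map $\{b,a\}_I$ into $C(C_2,R,I)/E(C_2,R,I)$ satisfies ${\rm MS}(1)$ but \emph{not} ${\rm MS}(2)$, so finiteness of this quotient does not follow from Theorem~\ref{finiteness_mennicken} as stated. The paper needs a new argument here: the squared map $[b,a]_I:=\{b^2,a\}_I$ is a genuine Mennicke symbol (Proposition~\ref{square_mennicken}), so the subgroup it generates is bounded by Theorem~\ref{finiteness_mennicken}, and a first-order adaptation of an argument of Bass--Milnor--Serre (Proposition~\ref{technical_square_prop}, using ${\rm Gen}(2,1)$) shows that subgroup has index at most $2$; nothing in your proposal supplies this. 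Second, your base-case list is off: Tavgen's reduction (Proposition~\ref{Tavgen_reduction}) excludes $G_2$ both as hypothesis and as conclusion, and no Mennicke theory for $G_2$ exists in Morris' framework, so $G_2$ cannot be handled ``directly by model theory'' as you propose; the paper instead reduces any element of $G_2(R)$ to the long-root ${\rm SL}_3(R)$ subgroup at a cost of $18$ root elements and then quotes the $A_2$ case. Third, your argument at best bounds $E(\Phi,R)$, whereas the theorem concerns $G(\Phi,R)$; the identification $G(\Phi,R)=E(\Phi,R)$ is not automatic over a Dedekind domain and is obtained in the paper from Matsumoto's theorem combined with the Bass--Milnor--Serre triviality of the Mennicke group for rings of integers of global function fields---this step is absent from your outline.
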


The techniques from \cite{MR704220} are cumbersome to adapt to the general global function field setup. Instead Theorem~\ref{main_thm} will be shown by using Morris' approach \cite{MR2357719} for the global function field case. Morris' arguments are model theoretic in nature. Said arguments proceed by deriving bounded generation by root elements for ${\rm SL}_n(R)$ for $R$ the ring of all algebraic integers of a number field from a first order compactness argument applied to a first order theory of commutative rings satisfied by all rings of all algebraic integers of a number field.

Our proof of Theorem~\ref{main_thm} at least for $\Phi=A_n$ (that is ${\rm SL}_{n+1}$) essentially consists of deriving a modified version of said theory satisfied by the rings of all integers in a global function field $K.$ However, new arguments have to be found to derive bounded generation results for some of the other root systems: Assuming a bounded generation result for the cases of $\Phi=C_2$ and $G_2$, the general Theorem~\ref{main_thm} for all root systems $\Phi$ can be proven by induction. Thus one is left with dealing with proving bounded generation for ${\rm Sp}_4(R)$ and $G_2(R).$ The latter case can be easily reduced to the ${\rm SL}_3(R)$-case, but the ${\rm Sp}_4$-case requires a different argument, which consists of a model theoretic argument inspired in part by the proof of Bass, Milnor and Serre's \cite[Theorem~3.7]{MR244257}. 

Next, the reason why we are interested in bounded generation by root elements for $G(\Phi,R)$ in the first place is the fact that it can be used to derive strong boundedness for the corresponding $G(\Phi,R).$ A group $G$ is called \textit{strongly bounded} if for each finite collection of conjugacy classes $T$ generating $G,$ there is a natural number $L(G,|T|)$ such that each element of $G$ can be written as a product of $L(G,|T|)$ many elements of $T\cup T^{-1}.$ The smallest such $L(G,|T|)$ is denoted by $\Delta_{|T|}(G).$ Strong boundedness was initially introduced by Kedra, Libman and Martin in \cite{KLM} and we showed in our previous paper \cite{General_strong_bound} that this property holds for essentially all arithmetic Chevalley groups:  

\begin{theorem}\cite[Theorem~5.13]{General_strong_bound}\label{alg_integers_strong_bound}
Let $R$ be the ring of all S-algebraic integers in a number field and $\Phi$ an irreducible root system of rank at least $2$. Then there is a constant $C(\Phi,R)\geq 1$ such that $\Delta_l(G(\Phi,R))\leq C(\Phi,R)\cdot l$ holds for all $l\in\mathbb{N}.$
\end{theorem}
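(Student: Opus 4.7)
The plan is to derive the strong-boundedness bound from two ingredients: (i)~bounded generation by root elements for $G(\Phi,R)$ in the $S$-integer, number-field setting, and (ii)~a uniform ``conjugate-length'' lemma showing that every root element can be realised as a short product of factors drawn from an arbitrary finite family $T$ of conjugacy classes generating $G(\Phi,R)$.

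For ingredient~(i), I would combine the work of Carter--Keller~\cite{MR704220}, Tavgen~\cite{MR1044049}, Morris~\cite{MR2357719} and Rapinchuk--Morgan--Sury~\cite{MR3892969}: together these yield a constant $N=N(\Phi,R)$ such that every element of $G(\Phi,R)$ is a product of at most $N$ root elements $x_\alpha(r)$ with $\alpha\in\Phi$ and $r\in R$. For ingredient~(ii), fix $T=\{C_1,\dots,C_l\}$ generating $G(\Phi,R)$ and aim for a constant $M=M(\Phi,R)$, independent of $T$, such that each root element $x_\alpha(r)$ is a product of at most $M\cdot l$ factors from $T\cup T^{-1}$. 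Multiplying the two bounds then yields $\Delta_l(G(\Phi,R))\le N\cdot M\cdot l$, so one may take $C(\Phi,R):=N\cdot M$.

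The heart of the argument is ingredient~(ii), which I would attack in two phases. In the first phase, combine well-chosen representatives $g_i\in C_i$ via iterated commutators to extract a single non-trivial root element $u_0:=x_{\beta_0}(r_0)$; since $T$ generates $G(\Phi,R)$ and $\Phi$ has rank $\ge 2$, Chevalley's commutator formula guarantees that such an extraction exists, and a priori it may require contributions from every class, which is where the linear factor $l$ enters. In the second phase, use conjugation by torus and Weyl-group elements, themselves built from a bounded number of root elements, to transport $u_0$ into any prescribed root subgroup $U_\alpha$, and use further Chevalley commutators to realise any desired coefficient $r\in R$. The arithmetic control needed for this last step, namely writing an arbitrary $r\in R$ as a bounded-length combination of products of controlled shape, is where the finiteness of the class number of $R$ together with the Dirichlet unit theorem for $S$-integers enters; both the transport and coefficient-realisation contribute only $O(1)$ additional factors to the word length, so the overall bound stays $O(l)$.

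The main obstacle is proving the uniform bound $M$ in~(ii): individual classes $C_i$ can be arbitrarily ``tame'' (near-central, or contained in proper subgroups such as Levi factors), so no single class need yield a root element directly. The linear dependence on $l$ reflects precisely the potential need to combine contributions from all $l$ classes to escape such proper subgroups and produce a genuine root element, and pinning down a uniform $M$ requires a careful case analysis organised by the behaviour of the $C_i$ relative to the root-subgroup structure of $G(\Phi,R)$.
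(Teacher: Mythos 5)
Your high-level skeleton does match the strategy behind this theorem (the paper cites it from \cite[Theorem~5.13]{General_strong_bound} and summarises the proof as: structure theory of normal subgroups, made uniform by a compactness argument, followed by bounded generation by root elements). But your ingredient~(ii) --- the uniform constant $M$ such that every root element lies in $B_T(M\cdot l)$ --- is exactly the content of that theorem, and your sketch of it has a genuine gap. The claim that ``Chevalley's commutator formula guarantees'' one can extract a nontrivial root element from well-chosen representatives $g_i\in C_i$ by iterated commutators is unsupported: commutator relations alone do not show that the normal closure $\dl X\dr$ of an arbitrary element $X$ contains a root element, let alone within a word length bounded independently of $X$, of $T$, and of the coefficient $r\in R$. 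The actual mechanism is the normal-subgroup structure theory via level ideals: normal generation is characterised by $\sum_{X\in T}l(X)=R$ (\cite[Corollary~3.11]{General_strong_bound}), Costa--Keller-type results place elements such as $\varepsilon_{2\alpha+\beta}(2x+x^2)\varepsilon_{\alpha+\beta}(x^2)$, $x\in l(X)$, inside $\dl X\dr$ (see Lemma~\ref{technical_keller_lemma}), and --- crucially --- the \emph{uniformity} of the length bound is obtained by a first-order compactness argument (as in \cite[Proposition~4.9]{General_strong_bound}), not by the ``careful case analysis organised by the behaviour of the $C_i$'' that you defer. The linear factor $l$ then enters by writing $1\in R$ as a sum of elements of the $l(X)$, $X\in T$, a partition-of-unity argument, rather than by ``escaping proper subgroups.'' Your appeal to the class number and the Dirichlet unit theorem for the coefficient-realisation step does not reflect how the argument runs; those inputs belong to the bounded-generation results of ingredient~(i), not to~(ii).

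A second concrete defect: for $\Phi=C_2$ or $G_2$ one cannot realise every root element $\varepsilon_\phi(r)$ this way. The extraction lemmas only yield root elements whose coefficients lie (essentially) in the finite-index ideal $2R$ --- this is precisely why the underlying theorem \cite[Theorem~3.1]{General_strong_bound} carries the hypothesis $(R:2R)<\infty$, and why the present paper must replace $2R$ by the booleanizing ideal ${\rm vn}_2(R)$ in characteristic $2$ (Proposition~\ref{char2_uniform_boundedness_tech_prop}). Consequently the conclusion is not ``each root element is a product of at most $M\cdot l$ factors from $T\cup T^{-1}$''; rather, one controls the root elements with coefficients in a finite-index ideal $I$, and then handles arbitrary group elements by splitting each coefficient as $b+x$ with $b\in I$ and $x$ ranging over a finite set of coset representatives, absorbing the finitely many residual products into an additive constant via the finite index of the associated normal subgroup (compare equation~(\ref{char2_uniform_boundedness_tech_prop_eq2})). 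Your purely multiplicative bound $N\cdot M\cdot l$ therefore needs to be replaced by a bound of the shape $C_1(\Phi,R)\cdot l + C_2(\Phi,R)$, which still gives $\Delta_l\leq C(\Phi,R)\cdot l$, but only after this coset argument is supplied.
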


The proof of this theorem is done by an application of a compactness argument to older results concerning the structure of normal subgroups of the $G(\Phi,R)$ followed by an application of bounded generation by root elements. Having obtained a bounded generation result in the form of Theorem~\ref{main_thm}, one can derive a corresponding theorem for global function fields:

\begin{theorem}\label{strong_bound_pos}
Let $K$ be a global function field that is a finite extension of $\mathbb{F}_q(T)$ with ring of integers $R:=\C O_K$ and field of constants $\mathbb{F}_q.$ Further, let $\Phi$ be an irreducible root system of rank at least $2$ and set $k:=[K:\mathbb{F}_q(T)].$ Then there is a constant $C(\Phi,R)\in\mathbb{N}$ such that 
$\Delta_l(G(\Phi,R))\leq C(\Phi,q,k)\cdot l$ holds for all $l\in\mathbb{N}.$ Additionally, if $\Phi\neq G_2$ or $C_2,$ the constant $C(\Phi,R)$ only depends on $q,k$ and $\Phi.$ In particular, $G(\Phi,R)$ is strongly bounded for all irreducible $\Phi$ of rank at least $2$.
\end{theorem}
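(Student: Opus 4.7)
The plan is to mirror the proof of Theorem~\ref{alg_integers_strong_bound} from \cite{General_strong_bound}, substituting Theorem~\ref{main_thm} of the present paper for the bounded generation by root elements input that was available in the number field setting. The general reduction scheme from bounded root-element generation to strong boundedness has two ingredients: first, expressing each element of $G(\Phi,R)$ as a bounded product of root elements, and second, expressing each root element $x_\alpha(r)$ as a product of at most $O(l)$ conjugates of elements of $T \cup T^{-1}$ whenever $T$ is a finite set whose conjugacy classes generate $G(\Phi,R)$ and $|T| = l$.

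For the first ingredient, Theorem~\ref{main_thm} directly gives at most $L(q,k)|\Phi|$ factors. For the second, one exploits that the normal closure of $T$ equals $G(\Phi,R)$: by the standard structural results for normal subgroups of Chevalley groups over commutative rings (which show that they are essentially parametrized by ideals of $R$), together with the Chevalley commutator formulas, a suitable sequence of conjugation-and-multiplication steps applied to elements of $T$ produces an element of a prescribed root subgroup, and from there an arbitrary $x_\alpha(r)$. The task is to bound the resulting conjugation cost by a linear function of $l$, with implicit constant depending only on $\Phi, q, k$ in general (and also on $R$ in the special cases $\Phi \in \{C_2, G_2\}$). Multiplying the two bounds yields $\Delta_l(G(\Phi,R)) \leq C \cdot l$, which is the desired inequality; strong boundedness of $G(\Phi,R)$ then follows formally from the definition.

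The main obstacle is obtaining the uniform bound in the second ingredient. In the number field setting, \cite{General_strong_bound} achieves this through a first order compactness argument applied to the common first order theory of rings of all algebraic integers. My plan is to replay this compactness argument using the modified first order theory of rings of integers of global function fields which is developed in this paper for the purposes of Theorem~\ref{main_thm}: once that theory is on the table, the same model theoretic reduction that worked in \cite{General_strong_bound} applies verbatim to the function field case. The case distinction in the statement, between dependence on just $(\Phi,q,k)$ and dependence on the full ring $R$ when $\Phi \in \{G_2, C_2\}$, will arise exactly from whether the bounded generation input enters through the clean model theoretic form available for the other root systems or through the more delicate, ring-specific arguments needed for $\mathrm{Sp}_4(R)$ and $G_2(R)$ in Theorem~\ref{main_thm} itself.
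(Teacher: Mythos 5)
Your plan is correct, and identical to the paper's proof, for every $\Phi\neq C_2,G_2$ (and for $C_2,G_2$ when ${\rm char}(K)\neq 2$): there one feeds Theorem~\ref{main_thm} into \cite[Theorem~3.1]{General_strong_bound} (restated as Theorem~\ref{strong_bound_original}), whose constant depends on $R$ only through the elementary-generation bound, hence only on $q,k,\Phi$. But for $\Phi=C_2$ or $G_2$ your proposal has a genuine gap: Theorem~\ref{strong_bound_original} carries the extra hypothesis $(R:2R)<\infty$, and when ${\rm char}(K)=2$ this fails outright, since $2R=0$ has infinite index in the infinite ring $R$. The number-field argument you want to replay ``verbatim'' works by placing the elements $\varepsilon_{\phi}(2x)$ in small conjugation balls and then using the finite index of $2R$; in characteristic $2$ all of these elements are trivial and the argument produces nothing. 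Re-running the compactness machinery over the function-field first order theory does not repair this, because the failure is not model theoretic but structural: the Costa--Keller normal-subgroup outputs of Lemma~\ref{technical_keller_lemma}, namely $\varepsilon_{2\alpha+\beta}(2x+x^2)\varepsilon_{\alpha+\beta}(x^2)$ for $x\in l(A)$, lose their linear term mod $2$, so the old route from $l(S)=R$ to a full root subgroup is blocked.

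The paper's actual proof supplies exactly the missing idea: replace $2R$ by the booleanizing ideal ${\rm vn}_2(R)=(x-x^2\mid x\in R)$, which is a non-zero (hence finite-index) ideal of $\C O_K$ generated by $k$ elements. Explicit commutator and Weyl-element computations (Propositions~\ref{char2_uniform_boundedness_tech_prop} and \ref{char2_uniform_boundedness_tech_prop_g2}) show ${\rm vn}_2(R)\subset\varepsilon(A,\phi,18k)$ resp.\ $\varepsilon(A,\phi,24k)$ for $A=\varepsilon_{\alpha+\beta}(1)\varepsilon_{2\alpha+\beta}(1)$ resp.\ $A=\varepsilon_{\beta}(1)$, while the Frobenius identity (``Freshman's dream'') converts $l(S)=R$, i.e.\ $1=\sum y_{ij}x_{ij}+\sum z_{ij}(x_{ii}-x_{jj})$, into the squared relation $1=\sum y_{ij}^2x_{ij}^2+\sum z_{ij}^2(x_{ii}-x_{jj})^2$, so that the characteristic-$2$ specialization $\varepsilon_{\alpha+\beta}(x^2)\varepsilon_{2\alpha+\beta}(x^2)$ of Lemma~\ref{technical_keller_lemma} reaches $A$ in $O(|S|)$ conjugation steps; bounded generation and the finite index of the normal closure $N$ of $A$ then finish the proof. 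Note finally that your diagnosis of the case distinction is also off: the bounded generation bounds of Theorem~\ref{main_thm} for $C_2$ and $G_2$ depend only on $q$ and $k$ (the $C_2$ case is itself model theoretic, and $G_2$ reduces to $A_2$). The dependence on $R$ in Theorem~\ref{strong_bound_pos} for these two root systems enters instead through the finite-index subgroup $N$: the diameter $\|N\|_A$ and the cost of covering the finitely many cosets of $N$ are controlled by the quotient modulo ${\rm vn}_2(R)$, whose size depends on the field $K$ itself and not merely on $q$ and $k$.
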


The main problem in proving Theorem~\ref{strong_bound_pos} comes from the specific cases of $\Phi=C_2$ or $G_2$ as the proof of Theorem~\ref{alg_integers_strong_bound} uses the fact that $2R$ has finite index in $R$ for this cases, which can clearly fail if the global function field in question has characteristic $2.$ Last, we present two corollaries which describe collections of conjugacy classes in $G(\Phi,R)$ with diameters proportional to the size of the corresponding collections. These two corollaries are:

\begin{corollary}\label{lower_bounds}
Let $K$ be a global function field that is a finite extension of $\mathbb{F}_q(T)$ with ring of integers $R:=\C O_K.$ Further, let $\Phi$ be an irreducible root system of rank at least $2$, which is not $\Phi=C_2$ or $G_2$. Then $\Delta_l(G(\Phi,R))\geq l$ holds for all $l\in\mathbb{N}.$
\end{corollary}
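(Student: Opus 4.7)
The plan is, for each $l \geq 1$, to exhibit a normally generating collection of $l$ conjugacy classes together with an element of $G(\Phi,R)$ whose expression in those classes must involve at least $l$ letters. Choose $l$ distinct maximal ideals $\mathfrak{p}_1,\ldots,\mathfrak{p}_l \subset R$ (possible since the Dedekind domain $R$ has infinitely many maximal ideals). Using that $R$ has finite class number $h$, one may choose generators $b_j$ of $\mathfrak{p}_j^h$ and set $a_i := \prod_{j\neq i} b_j$; then $a_i \in \mathfrak{p}_j$ for $j \neq i$, $a_i \notin \mathfrak{p}_i$, and the ideals $(a_1),\ldots,(a_l)$ are jointly coprime in the sense that $(a_1)+\cdots+(a_l) = R$. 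Fix any root $\alpha \in \Phi$; set $g_i := e_\alpha(a_i)$, let $C_i$ be the $G(\Phi,R)$-conjugacy class of $g_i$, and define $T := C_1 \cup \cdots \cup C_l$.

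I will first argue that $T$ normally generates $G(\Phi,R)$. By the standard structure theorems for normal subgroups of simply-connected Chevalley groups of rank at least $2$ over Dedekind domains (Matsumoto, Abe, Vaserstein, Stein and others), applicable when $\Phi \neq C_2, G_2$, the normal closure of $e_\alpha(a_i)$ in $G(\Phi,R)$ contains the elementary subgroup of level $(a_i)$; the joint normal closure of the $g_i$ therefore contains the elementary subgroup of level $(a_1)+\cdots+(a_l) = R$, which is all of $E(\Phi,R) = G(\Phi,R)$. The restriction $\Phi \neq C_2, G_2$ is precisely what avoids the characteristic $2$ and $3$ pathologies in the normal subgroup lattice possible over function fields.

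Now consider $g := g_1 g_2 \cdots g_l$. Since the $g_i$ all lie in the commutative root subgroup indexed by $\alpha$, we have $g = e_\alpha(a_1 + \cdots + a_l)$. For each $i$, the reduction $\pi_i \colon G(\Phi,R) \to G(\Phi,R/\mathfrak{p}_i)$ sends every element of $C_j$ with $j \neq i$ to the identity (since such an element is conjugate to $e_\alpha(a_j)$ and $a_j \in \mathfrak{p}_i$), while $\pi_i(g) = e_\alpha(\bar a_i) \neq 1$; in particular the $C_i$ are pairwise disjoint conjugacy classes. Given any expression $g = c_1^{\epsilon_1}\cdots c_n^{\epsilon_n}$ with $c_k \in C_{j_k}$, the projection $\pi_i(g)$ reduces to a product of precisely $|\{k : j_k = i\}|$ possibly-nontrivial factors, and nontriviality of $\pi_i(g)$ forces this set to be nonempty for every $i = 1,\ldots,l$. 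Since the $C_i$ partition the letters, we obtain $n \geq l$, proving $\Delta_l(G(\Phi,R)) \geq l$. The main obstacle in this plan is the normal generation step above: one must carefully invoke the Dedekind-domain normal-subgroup theory for $G(\Phi,R)$ and verify that jointly-coprime level ideals really do produce all of $E(\Phi,R)$, and it is precisely here that the exclusion of $\Phi = C_2, G_2$ enters.
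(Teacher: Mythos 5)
Your proof is correct, but it takes a genuinely different route from the paper. The paper disposes of this corollary in two lines: it cites Theorem~\ref{lower_bounds_number_fields} (that is, \cite[Proposition~6.1]{General_strong_bound}, after Kedra--Libman--Martin \cite{KLM}), and merely observes that $R=\C O_K$ is a Dedekind domain with finite class number and infinitely many maximal ideals, with Theorem~\ref{main_thm} supplying the bounded-generation hypothesis of that theorem. You instead reprove the lower bound from scratch, and your construction --- the $l$ maximal ideals $\mathfrak{p}_i$, the class-number trick $\mathfrak{p}_j^h=(b_j)$, the elements $a_i=\prod_{j\neq i}b_j$ with $(a_1)+\cdots+(a_l)=R$, and the letter-counting via the reductions $\pi_i$ --- is essentially the argument behind the cited black box; it is also the same device the paper deploys explicitly in its proof of Corollary~\ref{lower_bounds_rank2}. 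One substantive difference in your favor: your argument nowhere uses bounded generation by root elements, which the cited theorem assumes; you replace it by the normal-subgroup structure theory, so the lower bound comes out independently of Theorem~\ref{main_thm}. The counting half of your argument is airtight: the classes $C_i$ are pairwise disjoint because $\pi_i$ kills every conjugate of $g_j^{\pm 1}$ for $j\neq i$ while $\pi_i(g)=\varepsilon_{\alpha}(\bar a_i)\neq 1$, and each index $i$ must therefore absorb at least one letter, giving $n\geq l$.

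One step is overstated and worth flagging, though it is repairable. For doubly-laced $\Phi$ ($B_n$, $C_n$ with $n\geq 3$, $F_4$) over a field of constants of characteristic $2$, the normal closure of $\varepsilon_{\alpha}(a_i)$ with $\alpha$ short does \emph{not} contain the full elementary subgroup of level $(a_i)$: by the commutator formulas of Lemma~\ref{commutator_relations}(3), the long-root level one extracts is only $2a_iR+a_i^2R=a_i^2R$, and by Abe's admissible-pair phenomena the group $E(\Phi,R,(a_i))$ is genuinely larger; so the characteristic-$2$ pathologies are not entirely confined to $C_2$ and $G_2$, contrary to your closing remark. The repair is immediate: the joint normal closure of $g_1,\dots,g_l$ contains all short-root elements of level $(a_1)+\cdots+(a_l)=R$ and all long-root elements of level $(a_1^2)+\cdots+(a_l^2)$, and $(a_1)+\cdots+(a_l)=R$ forces $(a_1^2)+\cdots+(a_l^2)=R$ (expand $1=(\sum_i x_ia_i)^{2l}$ and note every monomial is divisible by some $a_i^2$). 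Hence the joint closure contains every root subgroup, i.e.\ $E(\Phi,R)$, which equals $G(\Phi,R)$ by Matsumoto's theorem together with the Bass--Milnor--Serre triviality of the Mennicken group, exactly as the paper itself argues at the end of Section~\ref{proof_main}. With this patch your proof is complete.
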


and

\begin{corollary}\label{lower_bounds_rank2}
Let $\Phi$ be $C_2$ or $G_2$ and let $K$ be a global function field with ring of integers $R:=\C O_K$. Further, let
\begin{equation*}
r:=r(R):=|\{\C P\mid \C P\text{ is a prime ideal and }R/\C P=\mathbb{F}_2\}|
\end{equation*}
be given. Then for $G(\Phi,R)$
\begin{enumerate}
\item{the inequality $\Delta_l(G(\Phi,R))\geq l$ holds for all $l\geq r(R)$ and}
\item{the equality $\Delta_l(G(\Phi,R))=-\infty$ holds for $l<r(R).$} 
\end{enumerate}
\end{corollary}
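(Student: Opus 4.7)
For part~(2), the non-generation claim follows from an abelianization obstruction. The Chevalley group $G(\Phi,\mathbb{F}_2)$ fails to be perfect when $\Phi\in\{C_2,G_2\}$: $\mathrm{Sp}_4(\mathbb{F}_2)\cong S_6$ admits the sign surjection onto $\mathbb{F}_2$, while $G_2(\mathbb{F}_2)$ likewise has a unique index-two subgroup yielding a surjection onto $\mathbb{F}_2$. Composing the simultaneous reductions at the $r=r(R)$ primes $\mathcal{P}$ with $R/\mathcal{P}=\mathbb{F}_2$ with these sign maps, and using CRT together with surjectivity of reduction on root subgroups, produces $\phi\colon G(\Phi,R)\twoheadrightarrow \mathbb{F}_2^{r}$. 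Since $\phi$ sends each conjugacy class to a single vector, any generating family of $l$ conjugacy classes descends to $l$ vectors spanning $\mathbb{F}_2^{r}$, forcing $l\geq r$. When $l<r$ no generating family exists, so $\Delta_l(G(\Phi,R))=-\infty$ by convention.

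For part~(1), fix $l\geq r$. In the base case $l=r$, choose $C_1,\ldots,C_r$ with $\phi(C_i)=e_i$ the standard basis of $\mathbb{F}_2^{r}$; concretely, take $C_i$ to be the conjugacy class of a sufficiently generic lift of a root element whose reduction is nontrivial only at the $i$-th $\mathbb{F}_2$-prime, so that the resulting $T$ generates $G(\Phi,R)$. For any $g$ with $\phi(g)=(1,\ldots,1)$, a factorization $g=c_1\cdots c_k$ in $T\cup T^{-1}$ yields $\sum_j\phi(c_j)=(1,\ldots,1)$ in $\mathbb{F}_2^{r}$, forcing every basis direction to occur and hence $k\geq r$. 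This gives $\Delta_r(G(\Phi,R))\geq r$.

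For $l>r$, the sign argument alone saturates at $r$, so I augment the $r$ sign-realizing classes with $l-r$ further classes $C_{r+1},\ldots,C_l\subseteq\ker\phi$, chosen so that a conjugation-invariant quasi-length $\nu\colon G(\Phi,R)\to\mathbb{Z}_{\geq 0}$ is bounded on $T$ by some constant $M$. A natural candidate for $\nu$ is the translation length on the Bruhat--Tits building of $G(\Phi,K_\infty)$ at the distinguished place $\infty$, or the transport to positive characteristic of the construction underlying Corollary~\ref{lower_bounds} in \cite{General_strong_bound}. An element $g$ with $\nu(g)>lM$ then admits no $T$-factorization of length less than $l$, provided $\nu$ is quasi-sub-additive; choosing $g$ also with $\phi(g)=(1,\ldots,1)$ aligns both obstructions and yields $\Delta_l(G(\Phi,R))\geq l$.

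The technical heart is the $l>r$ case: one must supply $\nu$ in positive characteristic with the requisite quasi-additivity under products of generators. The number-field analogue (Corollary~\ref{lower_bounds}) bypasses the $\mathbb{F}_2$-obstruction since its abelianization is trivial; transplanting that construction to $G(\Phi,R)$ for $R$ a function field integer ring, or replacing it with a direct geometric argument on the Bruhat--Tits building of $G(\Phi,K_\infty)$, is the step where most work concentrates.
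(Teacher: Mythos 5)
Your part~(2) is essentially the paper's argument: reduce modulo the $r(R)$ primes with residue field $\mathbb{F}_2$, use that $G(\Phi,\mathbb{F}_2)$ has $\mathbb{F}_2$ as a quotient (the paper cites \cite[Lemma~6.5]{General_strong_bound} rather than the identification $\mathrm{Sp}_4(\mathbb{F}_2)\cong S_6$), and conclude that $\mathbb{F}_2^{r(R)}$ is a quotient of $G(\Phi,R)$, so fewer than $r(R)$ conjugacy classes cannot normally generate. That part is fine.

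The genuine gap is in part~(1) for $l>r$, and you have located it yourself: you never construct the conjugation-invariant quasi-length $\nu$, and your proposed candidate would fail. Root elements are unipotent, so they and all their conjugates act on the Bruhat--Tits building of $G(\Phi,K_\infty)$ with translation length zero; moreover minimal displacement is not subadditive (infima over basepoints do not compose), so no estimate of the form $\nu(g)\leq kM$ for a product of $k$ conjugates of generators comes out of translation lengths. Worse, since $G(\Phi,R)$ is strongly bounded (Theorem~\ref{strong_bound_pos}), any subadditive conjugation-invariant $\nu$ bounded on a normally generating set is bounded on all of $G(\Phi,R)$, so the construction of $\nu$ cannot be decoupled from the choice of $T$: one must exhibit the pair $(T,\nu)$ together with an element on which $\nu$ is large relative to its bound on $T$. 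The paper does exactly this, with no case split between $l=r$ and $l>r$: choose auxiliary primes $\mathfrak{Q}_{r+1},\dots,\mathfrak{Q}_l$ outside $T_R$, let $C$ be the class number, and pick $y_i$ with $y_iR$ the $C$-th power of the product of all $l$ chosen primes \emph{omitting} the $i$-th (the power $C$ forces principality); then $S:=\{\varepsilon_{\phi}(y_i)\mid 1\leq i\leq l\}$ for $\phi$ a short root. Normal generation of $S$ is checked via the characterization in Corollary~\ref{characterization_generation} (level ideal $l(S)=R$ plus the image condition at the $\mathbb{F}_2$-primes) --- a verification your ``sufficiently generic lift'' also silently needs in the base case. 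The lower bound then follows from the implicit quasi-length you were looking for: the number of chosen primes at which $g\not\equiv 1$. Each $\varepsilon_{\phi}(y_i)^{\pm 1}$, hence each of its conjugates, is trivial modulo every chosen prime except the $i$-th, so a product of $k<l$ such conjugates is trivial modulo some chosen prime, while $\varepsilon_{\phi}(1)$ is not; hence $\|G(\Phi,R)\|_S\geq l$. This residue-support function transports to positive characteristic with no difficulty, which is why the paper can say the argument is virtually identical to \cite[Theorem~6.3]{General_strong_bound}; the only characteristic-$2$-specific input is Corollary~\ref{characterization_generation}, which your sketch does not invoke.
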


The rest of the paper is divided into five additional sections: In the second section, we introduce all concepts used in the rest of the paper and state some lemmas needed later on. In particular, we will recall the definition of global function fields and their rings of integers, define Mennicken groups and give a short account of simply-connected, split Chevalley groups and conjugation generated word norms. In the third section, we derive Theorem~\ref{main_thm} from a theorem of Morris and in the fourth section we derive Theorem~\ref{strong_bound_pos}. The fifth section in turn will prove Corollary~\ref{lower_bounds} and \ref{lower_bounds_rank2}. The last section contains a couple of remarks concerning generalizations of Theorem~\ref{main_thm} and a (short) discussion of another possible application of said theorem.

\section*{Acknowledgments}

This work was funded by the LMS Early Career Fellowship.

\section{Basic definitions and notions}
\label{sec_basic_notions}

\subsection{Global function fields and their integers}

In this subsection, we introduce global function fields, their rings of integers and their primes. For a good introduction to the topic, we refer the reader to Rosen's book \cite{MR1876657}. We begin with the definition of global function fields:

\begin{definition}
A \textit{global function field over $\mathbb{F}_q$} is a finite field extension $K$ of the rational function field $\mathbb{F}_q(T)$ for $\mathbb{F}_q$ a finite field with $q$ elements. Then the ring of integers $\C O_K$ of $K$ is defined as the integral closure of $\mathbb{F}_q[T]$ in $K.$
\end{definition}

\begin{remark}
\hfill
\begin{enumerate}
\item{Equivalently, one can define global function fields as the fraction field of functions $\mathbb{F}_q(C)$ of a nonsingular, geometrically integral, affine curve $C$ defined over the finite field $\mathbb{F}_q.$}
\item{The principal example of a global function over $\mathbb{F}_q$ is the field of rational functions $\mathbb{F}_q(T).$ The associated curve is the affine line $\mathbb{A}^1$ of the field $\mathbb{F}_q.$}
\end{enumerate}
\end{remark}

In the rest of the paper, we will always replace the field $\mathbb{F}_q$ by its algebraic closure $\mathbb{F}$ in $K$. We leave it as an exercise to check that $\mathbb{F}$ is a finite field extension of $\mathbb{F}_q$. Assuming that $\mathbb{F}_q$ is algebraically closed in $K$, we call $\mathbb{F}_q$ the \textit{field of constants of $K$}. Later, we will also need the concept of primes in global function fields, not to be confused with the prime ideals of the corresponding ring of integers:
 
\begin{definition}
Let $K$ be a global function field with field of constants $\mathbb{F}_q$. 
\begin{enumerate}
\item{Then a \textit{prime of $K$} is a subring $R_{\C P}$ of $K$ containing $\mathbb{F}_q$, such that $R_{\C P}$ is a discrete valuation domain with fraction field $K$ and maximal ideal $\C P.$}
\item{We define the function
\begin{equation*}
{\rm ord}_{\C P}:K^*\to\mathbb{Z}
\end{equation*}  
as follows: If $a\in R_{\C P}-\{0\}$ is given, then ${\rm ord}_{\C P}(a)\in\mathbb{Z}$ is defined as the maximal $n\in\mathbb{N}_0$ such that $a\in\C P^n.$ If $a\in K^*$ is not an element of $R,$ then $a^{-1}$ is an element of $R_{\C P}$ and then ${\rm ord}_{\C P}(a)$ is defined as $-{\rm ord}_{\C P}(a^{-1}).$}
\item{For a given prime $R_{\C P}$ of $K,$ the \textit{degree} of said prime, denoted by ${\rm deg}(\C P),$ is defined as ${\rm deg}(\C P):=[R_{\C P}/\C P:\mathbb{F}_q].$ }
\end{enumerate}
\end{definition}

\begin{remark}
Especially later on, we will often refer to `the prime $\C P$ of $K$' rather than `to the prime $R_{\C P}$ of $K$.'
\end{remark}

Furthermore, for a finite extension $K\mid L$ of global function fields and $R_{\C P}$ a prime of $K,$ the intersection $S_p:=R_{\C P}\cap L$ is a prime of the global function field $L.$ In this case, we refer to the prime $\C P$ of $K$ as covering (or lying over) the prime $p$ of $L.$ In this case, let $x_p\in S_p$ be the generator of the maximal ideal $p$ in $S_p.$ We then define $e(\C P,p):={\rm ord}_{\C P}(x_p)$. The integer $e(\C P,p)$ does not depend on the choice of $x_p$ and is called the \textit{ramification index of $\C P$ over $p.$} Furthermore, the integer $f(\C P\mid p):=[R_{\C P}/\C P:S_p/p]$ is called \textit{the relative degree of $\C P$ over $p.$} The following theorem holds:

\begin{theorem}\cite[Theorem~7.6]{MR1876657}\label{sum_formula}
Let $K\mid L$ be a finite extension of global function fields and $p$ a prime of $L.$ Then $T_p$, the set of all primes of $K$ covering $p$, is finite and the equation
$[L:K]=\sum_{\C P\in T_p}f(\C P\mid p)\cdot e(\C P\mid p)$ holds.
\end{theorem}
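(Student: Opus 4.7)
The plan is to study the integral closure $T$ of $S_p$ in $K$, whose prime ideal structure will parametrize the primes of $K$ lying above $p$. First I would argue that $T$ is a Dedekind domain with fraction field $K$ and that assigning to each maximal ideal $\F{m}$ of $T$ the localization $T_{\F{m}}$ establishes a bijection between the set of maximal ideals of $T$ and the set $T_p$: the localization $T_{\F{m}}$ is a DVR with fraction field $K$ containing $\mathbb{F}_q$, hence a prime of $K$ in the sense of the above definition; conversely, any prime $R_{\C P}$ of $K$ covering $p$ contains $S_p$ and is integrally closed in $K$, so it contains $T$, and $\C P \cap T$ is a maximal ideal of $T$.

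Next, I would prove finiteness of $T_p$ and derive the sum formula by computing the $\mathbb{F}$-dimension (where $\mathbb{F} := S_p/p$) of $T/pT$ in two ways. Since $T$ is Dedekind, the nonzero ideal $pT$ admits a unique factorization $pT = \prod_{\C P \in T_p} \C P^{e(\C P \mid p)}$ into finitely many prime powers, which simultaneously yields the finiteness of $T_p$. The Chinese Remainder Theorem then gives
\[
T/pT \;\cong\; \prod_{\C P \in T_p} T/\C P^{e(\C P \mid p)},
\]
and filtering each factor by the powers of $\C P$ shows its $\mathbb{F}$-dimension is $e(\C P \mid p)\cdot f(\C P \mid p)$, since each successive quotient $\C P^i/\C P^{i+1}$ is a one-dimensional $(T/\C P)$-vector space (the localization $T_{\C P}$ being a DVR) and $T/\C P$ has $\mathbb{F}$-dimension $f(\C P \mid p)$ by definition. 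On the other hand, if one can show that $T$ is a free $S_p$-module of rank $n := [K:L]$, then $\dim_{\mathbb{F}} T/pT = n$, and equating the two counts yields the formula.

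The main obstacle is precisely establishing that $T$ is a free $S_p$-module of rank $n$. In the separable case this is a classical consequence of the non-degeneracy of the trace pairing: starting from any $L$-basis of $K$ contained in $T$, the dual basis under the trace form generates a finitely generated $S_p$-lattice containing $T$, so $T$ is finitely generated and torsion-free, hence free of the correct rank over the PID $S_p$. Since finite extensions of global function fields in positive characteristic may fail to be separable, I would handle the inseparable case by factoring $K/L$ through its maximal separable subextension $K_s/L$, using the multiplicativity of $e$, $f$ and the degree in towers together with a direct length computation for the purely inseparable step $K/K_s$ to reduce to the separable case already treated.
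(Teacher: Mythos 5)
The paper offers no proof of this statement at all --- it is imported verbatim from Rosen's book --- so your proposal can only be compared with the standard literature proof, which it reproduces faithfully and, in outline, correctly. The dictionary between primes of $K$ covering $p$ and maximal ideals of the integral closure $T$ of $S_p$, the factorization $pT=\prod_{\C P\in T_p}\C P^{e(\C P\mid p)}$ (which indeed yields finiteness of $T_p$), the Chinese-Remainder-plus-filtration count $\dim_{\mathbb{F}}T/pT=\sum_{\C P\in T_p}e(\C P\mid p)f(\C P\mid p)$, and the competing count $\dim_{\mathbb{F}}T/pT=n$ from freeness of $T$ over the discrete valuation ring $S_p$ are exactly the classical steps. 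Note in passing that you prove the correctly oriented identity $[K:L]=\sum ef$; the ``$[L:K]$'' in the displayed statement is a typo in the paper. You also deserve credit for flagging the one genuine danger: in positive characteristic the trace argument for finiteness of $T$ is unavailable when $K/L$ is inseparable, and for general discretely valued fields the identity can honestly fail (defect), so the purely inseparable step is not a formality.

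That step is also the only thin point of your write-up: ``a direct length computation for the purely inseparable step'' is left unspecified, and as stated it is a fillable gap rather than a complete argument. To close it, use that the constant field is finite, hence perfect, so any function field $F$ of one variable over it satisfies $[F:F^{1/p}]^{-1}$-free bookkeeping, namely $[F^{1/p}:F]=p$; consequently the purely inseparable part of the tower is exactly the chain $K_s\subset K_s^{1/p}\subset\cdots\subset K_s^{1/p^m}=K$. For a single step $F\subset F^{1/p}$ and a prime $\C Q$ of $F$, the unique valuation ring above $R_{\C Q}$ is $\{x\in F^{1/p}\mid x^p\in R_{\C Q}\}$, which Frobenius identifies with $R_{\C Q}$; one reads off $e=p$ (a uniformizer $t$ of $R_{\C Q}$ has ${\rm ord}(t^{1/p})=1$ above) and $f=1$ (the residue extension is killed because finite residue fields are perfect), whence $ef=p$ with no defect, and multiplicativity of $e$ and $f$ in towers finishes the reduction exactly as you propose. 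Alternatively, you could avoid the case split altogether by invoking finiteness of the integral closure for function fields (the base rings here are Nagata/excellent), which restores the freeness argument even in the inseparable case. Either patch completes your outline; the approach itself is sound and is the standard one behind the cited result.
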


Also the following statement holds:

\begin{lemma}\cite[Proposition~5.1]{MR1876657}\label{class_field_formula}
Let $K$ be a global function field. Then for all $a\in K^*$, there are only finitely many primes $\C P$ of $K$ such that ${\rm ord}_{\C P}(a)\neq 0.$ Further, the equation $\sum_{\C P}{\rm deg}(\C P)\cdot{\rm ord}_{\C P}(a)=0$ holds with the sum taken over all primes of $K.$ 
\end{lemma}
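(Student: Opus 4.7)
The plan is to first dispose of the finiteness claim and then reduce the sum formula to the rational function case $K = \mathbb{F}_q(T)$, where it follows from a direct computation using the explicit description of primes.

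For finiteness, write $a = b/c$ with $b, c \in \C O_K \setminus \{0\}$. Any prime $\C P$ of $K$ corresponding to a maximal ideal of $\C O_K$ with ${\rm ord}_{\C P}(a) \neq 0$ must appear in the (finite) prime ideal factorizations of $b\C O_K$ or $c\C O_K$, since $\C O_K$ is a Dedekind domain. The remaining primes of $K$ are those not arising from maximal ideals of $\C O_K$; these lie above the unique prime at infinity of $\mathbb{F}_q(T)$, and by Theorem~\ref{sum_formula} there are only finitely many of them. Hence ${\rm ord}_{\C P}(a) \neq 0$ for at most finitely many $\C P$, and the sum in the statement is well-defined.

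For the sum formula, I would reduce to $\mathbb{F}_q(T)$ via the norm map $N := N_{K/\mathbb{F}_q(T)} \colon K^* \to \mathbb{F}_q(T)^*$. Standard local-global arguments (decomposing $K \otimes_{\mathbb{F}_q(T)} \mathbb{F}_q(T)_p \cong \prod_{\C P \mid p} K_{\C P}$ and applying the local norm-valuation formula) give
\begin{equation*}
{\rm ord}_p\bigl(N(a)\bigr) = \sum_{\C P \mid p} f(\C P \mid p) \cdot {\rm ord}_{\C P}(a)
\end{equation*}
for every prime $p$ of $\mathbb{F}_q(T)$, while the tower of residue fields yields $\deg(\C P) = f(\C P \mid p) \cdot \deg(p)$. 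Grouping the primes $\C P$ of $K$ by the prime $p$ they cover (legitimate by Theorem~\ref{sum_formula}), these two identities combine to
\begin{equation*}
\sum_{\C P} \deg(\C P) \cdot {\rm ord}_{\C P}(a) \;=\; \sum_p \deg(p) \cdot {\rm ord}_p\bigl(N(a)\bigr),
\end{equation*}
reducing the claim to the sum formula for $\mathbb{F}_q(T)$ applied to $N(a)$.

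For $\mathbb{F}_q(T)$ itself, the primes are known completely: each monic irreducible $\pi \in \mathbb{F}_q[T]$ gives a finite prime of degree $\deg(\pi)$, and there is one additional prime at infinity of degree $1$, associated with the valuation ${\rm ord}_\infty(f/g) = \deg(g) - \deg(f)$. Writing a general element of $\mathbb{F}_q(T)^*$ as $a = \alpha \prod_i \pi_i^{n_i}$ with $\alpha \in \mathbb{F}_q^*$, distinct monic irreducibles $\pi_i$, and $n_i \in \mathbb{Z}$, the finite-prime contribution to the sum is $\sum_i n_i \deg(\pi_i)$, while the contribution at infinity is $-\sum_i n_i \deg(\pi_i)$, and these cancel. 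The one genuinely technical ingredient is the norm-valuation identity above; everything else is routine bookkeeping, so I expect no serious obstacle.
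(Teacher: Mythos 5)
The paper offers no proof of this lemma at all --- it is imported verbatim from Rosen \cite[Proposition~5.1]{MR1876657} --- so there is no internal argument to compare against; your proposal must be judged on its own terms, and in outline it is the standard argument and essentially correct. The finiteness part is fine (Dedekind factorization of $b\C O_K$ and $c\C O_K$, plus finiteness of the set of primes over $R_\infty$ via Theorem~\ref{sum_formula}), the tower identity $\deg(\C P)=f(\C P\mid p)\cdot\deg(p)$ is exactly the computation the paper itself carries out inside the proof of Lemma~\ref{degree_func}, and the explicit verification for $\mathbb{F}_q(T)$ is correct.

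The one genuine gap sits precisely at the step you flag as the "technical ingredient": the decomposition $K\otimes_{\mathbb{F}_q(T)}\mathbb{F}_q(T)_p\cong\prod_{\C P\mid p}K_{\C P}$ is valid only when $K/\mathbb{F}_q(T)$ is \emph{separable}, whereas the paper's definition of a global function field is an arbitrary finite extension of $\mathbb{F}_q(T)$; in characteristic $p$ this extension may be inseparable (e.g.\ $\mathbb{F}_p(T^{1/p})/\mathbb{F}_p(T)$), in which case the tensor product has nilpotents and your cited isomorphism fails. The norm--valuation identity ${\rm ord}_p(N(a))=\sum_{\C P\mid p}f(\C P\mid p)\,{\rm ord}_{\C P}(a)$ does remain true in full generality --- one can obtain it from the ideal-theoretic norm, $N_{K/F}(a)\C O_F=N_{K/F}(a\C O_K)$ together with $N_{K/F}(\C P)=p^{f(\C P\mid p)}$, which needs no separability, or by splitting the extension into a separable step followed by a purely inseparable one and checking the latter by hand (unique prime above, $N(a)=a^{[K:E]}$, $ef=[K:E]$) --- but as written your justification does not cover all the fields the lemma applies to. Alternatively, since the statement is intrinsic to $K$, you are free to change the base: $\mathbb{F}_q$ is perfect, so $K$ is separably generated and $T$ may be replaced by a separating element $x$. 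A slicker route (essentially the one in \cite{MR1876657}) dispenses with norms entirely by adapting the base to the element: for nonconstant $a$ take $F=\mathbb{F}_q(a)$; then ${\rm ord}_{\C P}(a)\neq 0$ only for $\C P$ lying over the two degree-one primes $(a)$ and $\infty$ of $F$, where ${\rm ord}_{\C P}(a)=\pm e(\C P\mid\cdot)$, and Theorem~\ref{sum_formula} yields $\sum_{\C P}\deg(\C P)\,{\rm ord}_{\C P}(a)=[K:F]-[K:F]=0$ directly, with the finiteness claim falling out of the same computation.
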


As mentioned already, the field $\mathbb{F}_q(T)$ is a global function field in its own right. It has two distinct types of primes: First, for a monic, irreducible polynomial $f(T)\in\mathbb{F}_q[T]$, consider the localization $R_f:=\mathbb{F}_q[T]_{f(T)\mathbb{F}_q[T]}.$ This local ring is a prime of $\mathbb{F}_q(T).$ Second, consider the ring $A':=\mathbb{F}_q[T^{-1}]$ and its localization $R_{\infty}:=A'_{T^{-1}A'}.$ The ring $R_{\infty}$ is also a prime of $\mathbb{F}_q(T)$ and is commonly called \textit{the prime $\C P_{\infty}$ at infinity of $\mathbb{F}_q(T)$}. We leave it as an exercise to the reader to check that these two types make up all primes of $\mathbb{F}_q(T)$ and that the degree of the prime $R_f$ for $f$ monic and irreducible is the degree of $f$ as a polynomial and that the degree of $\C P_{\infty}$ is equal to $1$. 

Also note, that each maximal ideal $\mathfrak{P}$ of the ring of integers $\C O_K$ of a global function field $K$ defines a prime of $K$, namely the localization $R_{\mathfrak{P}}:=(\C O_K)_{\mathfrak{P}}.$ Additionally, the isomorphism $R_{\mathfrak{P}}/(\mathfrak{P}R_{\mathfrak{P}})=\C O_K/\mathfrak{P}$ implies that ${\rm deg}(R_{\mathfrak{P}})={\rm dim}_{\mathbb{F}_q}(\C O_K/\mathfrak{P}).$ We denote this number by ${\rm deg}(\mathfrak{P}).$ We leave it as an exercise to check that the primes of $K$ arising in this manner are precisely the ones not covering the prime $R_{\infty}$ of $\mathbb{F}_q(T)$ and that the map 
\begin{equation*}
{\rm MaxSpec}(\C O_K)\to\{\C P\text{ prime of }K\mid \C P\text{ does not cover }R_{\infty}\},\mathfrak{P}\mapsto R_{\mathfrak{P}}
\end{equation*}
is a bijection. By way of this bijection, we will usually identify maximal ideals of $\C O_K$ with their associated primes. In this context, the following lemma is useful later in the paper: 

\begin{lemma}\label{degree_func}
Let $K$ be a global function field that is a finite extension of $\mathbb{F}_q(T)$ and has ring of integers $\C O_K$. Next, let $S_{\infty}$ be the set of primes of $K$ covering the prime $R_{\infty}$ of $\mathbb{F}_q(T)$. Then 
\begin{equation*}
{\rm deg}(a\C O_K)=-\sum_{\C P\in S_{\infty}} f(\C P\mid R_{\infty})\cdot{\rm ord}_{\C P}(a)
\end{equation*}
holds for each principal, maximal ideal $a\C O_K$ of $\C O_K.$ 
\end{lemma}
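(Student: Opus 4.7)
The plan is to derive the identity from the sum formula in Lemma \ref{class_field_formula}, splitting the sum of the orders (weighted by degrees) over the two natural classes of primes of $K$, namely the primes that cover $R_\infty$ (those in $S_\infty$) and the primes that do not (which, by the bijection noted just before the lemma, correspond to maximal ideals of $\C O_K$).

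Concretely, I would first apply Lemma \ref{class_field_formula} to $a \in K^*$ to obtain
\[
\sum_{\mathfrak{P}\in{\rm MaxSpec}(\C O_K)}{\rm deg}(\mathfrak{P})\cdot{\rm ord}_{\mathfrak{P}}(a)\ +\ \sum_{\C P\in S_\infty}{\rm deg}(\C P)\cdot{\rm ord}_{\C P}(a)=0.
\]
The first sum collapses: since $a\C O_K$ is assumed to be a (nonzero) maximal ideal, it equals the prime factorization of $a\C O_K$, so the only nonzero contribution comes from $\mathfrak{P}=a\C O_K$ with ${\rm ord}_{\mathfrak{P}}(a)=1$, which gives exactly ${\rm deg}(a\C O_K)$ by the very definition of ${\rm deg}(\mathfrak{P})$ recalled in the paragraph preceding the lemma.

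For the second sum I need to replace ${\rm deg}(\C P)$ by $f(\C P\mid R_\infty)$. This uses the explicit description of $R_\infty=\mathbb{F}_q[T^{-1}]_{T^{-1}\mathbb{F}_q[T^{-1}]}$, whose residue field is $\mathbb{F}_q[T^{-1}]/(T^{-1})\cong\mathbb{F}_q$. Therefore
\[
{\rm deg}(\C P)=[R_{\C P}/\C P:\mathbb{F}_q]=[R_{\C P}/\C P:R_\infty/(T^{-1}R_\infty)]=f(\C P\mid R_\infty).
\]
Substituting this and solving for ${\rm deg}(a\C O_K)$ yields the claimed formula.

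The only delicate point, and really the only nontrivial step, is the residue-field identification in the previous paragraph; the rest is just bookkeeping against Lemma \ref{class_field_formula} and the standard bijection between maximal ideals of $\C O_K$ and primes of $K$ not covering $R_\infty$. I would also briefly note that the case $a\in\mathbb{F}_q^*$ is excluded because $a\C O_K=\C O_K$ is not maximal, so $a\not\in\mathbb{F}_q$ and hence $a\in K^*$ has some ${\rm ord}_{\C P}(a)\neq 0$, making Lemma \ref{class_field_formula} directly applicable without degenerate cases.
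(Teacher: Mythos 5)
Your proposal is correct and follows essentially the same route as the paper's own proof: both apply Lemma~\ref{class_field_formula}, observe that the only contribution away from $S_{\infty}$ comes from the single prime $\mathfrak{P}=a\C O_K$ with ${\rm ord}_{\mathfrak{P}}(a)=1$, and then convert ${\rm deg}(\C P)$ into $f(\C P\mid R_{\infty})$ using that $R_{\infty}$ has degree $1$, i.e.\ $R_{\infty}/\C P_{\infty}\cong\mathbb{F}_q$. Your direct residue-field identification is just an unwound version of the paper's tower formula $[R_{\C P}/\C P:\mathbb{F}_q]=[R_{\C P}/\C P:R_{\infty}/\C P_{\infty}]\cdot[R_{\infty}/\C P_{\infty}:\mathbb{F}_q]$, so there is no substantive difference.
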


\begin{proof}
Set $\mathfrak{P}:=a\C O_K.$ Then the only primes $\C Q$ of $K$ for which ${\rm ord}_{\C Q}(a)\neq 0$ is possible are elements of $S_{\infty}\cup\{\mathfrak{P}\}.$ Next, note that by Theorem~\ref{class_field_formula}, the equation 
\begin{equation*}
0=\sum_{\C Q\text{ prime of }K}{\rm deg}(\C Q)\cdot{\rm ord}_{\C Q}(a)={\rm deg}(\mathfrak{P})\cdot{\rm ord}_{\mathfrak{P}}(a)
+\sum_{\C P\in S_{\infty}}{\rm deg}(\C P)\cdot{\rm ord}_{\C P}(a) 
\end{equation*} 
holds. However, obviously ${\rm deg}(\mathfrak{P})={\rm deg}(a\C O_K)$ and ${\rm ord}_{\mathfrak{P}}(a)=1$ holds. Thus we obtain 
\begin{equation*}
{\rm deg}(a\C O_K)=-\sum_{\C P\in S_{\infty}}{\rm deg}(\C P)\cdot{\rm ord}_{\C P}(a).
\end{equation*} 
Last, note that as $R_{\infty}$ has degree $1$ as a prime of $\mathbb{F}_q(T),$ we obtain 
\begin{equation*}
{\rm deg}(\C P)=[R_{\C P}/\C P:\mathbb{F}_q]=[R_{\C P}/\C P:R_{\infty}/\C P_{\infty}]\cdot[R_{\infty}/\C P_{\infty}:\mathbb{F}_q]=f(\C P\mid R_{\infty})\cdot{\rm deg}(\C P_{\infty})=f(\C P\mid R_{\infty})
\end{equation*}
for any prime $\C P\in S_{\infty}.$ This finishes the proof.
\end{proof}

\subsection{Mennicken symbols and the universal Mennicken group}

Mennicken symbols and the universal Mennicken group will play an important role in Section~\ref{proof_main}. These symbols were initially defined by Mennicken and subsequently used to great effect by Bass, Milnor and Serre \cite{MR244257} to investigate the Congruence Subgroup Property for arithmetic Chevalley groups. We first introduce the set $W(I):$

\begin{definition}
Let $R$ be a commutative ring with $1$ and $I$ a non-zero ideal of $R.$ 
\begin{enumerate}
\item{
Then the set $W(I)$ is defined as 
\begin{equation*}
W(I)=\{(b,a)\in R^2\mid (b,a)\equiv (0,1)\text{ mod }I, aR+bR=R\}.
\end{equation*}}
\item{For $(b,a)\in W(I)$, an element $(d,c)$ is called \textit{$I$-equivalent to $(b,a)$}, if there is an $x\in R$ or $y\in I$, such that either $c=a+xb$ or $d=b+ya.$}
\end{enumerate}
\end{definition}

\begin{remark}
Beware that the order of the condition I require of elements of $W(I)$ is flipped when compared to the classical definition given, say in the Bass, Milnor, Serre paper \cite{MR244257}; that is I require $(b,a)\equiv (0,1)\text{ mod }I$ rather than $(a,b)\equiv (1,0)\text{ mod }I$. This is mostly done to avoid having to switch the coordinates of an element $(b,a)\in W(I)$, when applying a Mennicken symbol to them. 
\end{remark}

Next, we define Mennicken symbols:

\begin{definition}
Let $R$ be a commutative ring with $1$ and $I$ a non-zero ideal of $R$ and $G$ a group. Then a map $f:W(I)\to G,(b,a)\mapsto f(b,a)$ is called a Mennicken symbol, if it satisfies the following properties 
\begin{enumerate}
\item{${\rm MS}(1):$ If $(b,a),(d,c)\in W(I)$ are $I$-equivalent, then $f(b,a)=f(d,c)$ holds.}
\item{${\rm MS}(2):$ If $(b_1,a),(b_2,a)$ are elements of $W(I),$ then 
\begin{equation*}
f(b_1\cdot b_2,a)=f(b_1,a)\cdot f(b_2,a)
\end{equation*}
holds.}
\end{enumerate}
\end{definition}

Using the familiar construction of universal objects in abstract algebra, there is a universal Mennicken group as follows:

\begin{proposition}\label{universal_mennicken}\cite[Lemma~2.19(5)]{MR2357719}
Let $R$ be a commutative ring with $1$ with stable range at most $3/2$ and $I$ a non-zero ideal in $R.$ Then there is an abelian group $M(I)$ and a Mennicken symbol 
$\bracenom{\:}{\:}_I:W(I)\to M(I),(b,a)\mapsto\bracenom{b}{a}_I$ satisfying the following universal property: For each group $G$ and each Mennicken symbol $f:W(I)\to G$, there is a unique homomorphism $\theta_f:M(I)\to G$ such that $\theta_f\circ\bracenom{\:}{\:}_I=f$. The group $M(I)$ is called \textit{the universal Mennicken group (of the pair $(R,I)$)}. 
\end{proposition}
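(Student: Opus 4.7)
The plan is to proceed by the standard universal-object construction and then verify commutativity using the stable range hypothesis. First, let $F$ be the free group on the set of formal symbols $\{[b,a] : (b,a)\in W(I)\}$, and let $N\triangleleft F$ be the normal subgroup generated by all relators of the form $[b,a][d,c]^{-1}$ whenever $(b,a)$ and $(d,c)$ are $I$-equivalent, together with all relators $[b_1 b_2, a][b_1,a]^{-1}[b_2,a]^{-1}$ for pairs $(b_1,a),(b_2,a)\in W(I)$. Set $M_0(I):=F/N$ and define $\bracenom{\:}{\:}_I:W(I)\to M_0(I)$ by sending $(b,a)$ to the class of $[b,a]$. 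By construction this map satisfies ${\rm MS}(1)$ and ${\rm MS}(2)$, hence is a Mennicken symbol.

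The universal property is then a routine consequence of the universal property of free groups: given any Mennicken symbol $f:W(I)\to G$ into an arbitrary group $G$, the assignment $[b,a]\mapsto f(b,a)$ extends uniquely to a homomorphism $\tilde{f}:F\to G$, and since $f$ satisfies ${\rm MS}(1)$ and ${\rm MS}(2)$, $\tilde{f}$ vanishes on every generator of $N$ and thus factors uniquely through a homomorphism $\theta_f:M_0(I)\to G$ with $\theta_f\circ\bracenom{\:}{\:}_I=f$. Uniqueness is automatic because the image of $\bracenom{\:}{\:}_I$ generates $M_0(I)$ as a group.

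It then remains to show that $M_0(I)$ is actually abelian, and this is where the hypothesis on the stable range enters. The strategy is to prove that any two generators $\bracenom{b_1}{a_1}_I$ and $\bracenom{b_2}{a_2}_I$ commute. I would do this in two steps. First, using $I$-equivalence together with stable range $\leq 3/2$, I would replace the two given pairs by $I$-equivalent representatives $(b_1',a)$ and $(b_2',a)$ sharing a common right coordinate $a$. Then ${\rm MS}(2)$ together with the commutativity of the underlying ring yields
\begin{equation*}
\bracenom{b_1'}{a}_I\bracenom{b_2'}{a}_I=\bracenom{b_1' b_2'}{a}_I=\bracenom{b_2' b_1'}{a}_I=\bracenom{b_2'}{a}_I\bracenom{b_1'}{a}_I.
\end{equation*}
Setting $M(I):=M_0(I)$ then completes the argument.

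The main obstacle is the first step of the commutativity argument: producing a common right coordinate via $I$-equivalence is a genuinely non-formal manipulation, and stable range $\leq 3/2$ is precisely the hypothesis that allows one to adjust the first coordinate of a pair $(b,a)\in W(I)$ flexibly while preserving both unimodularity of $(b,a)$ and the congruence condition $(b,a)\equiv (0,1) \bmod I$. This is exactly the place where Morris's treatment \cite{MR2357719} and its Bass--Milnor--Serre predecessor \cite{MR244257} invoke the stable range hypothesis, and any alternative approach would have to confront the same technical obstruction.
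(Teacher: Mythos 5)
The paper offers no proof of this proposition at all --- it is quoted verbatim from Morris \cite[Lemma~2.19(5)]{MR2357719} with only the remark that it follows from ``the familiar construction of universal objects,'' so your attempt can only be measured against the standard argument in the cited source. Your construction of $M_0(I)$ as the free group on symbols $[b,a]$ modulo the ${\rm MS}(1)$- and ${\rm MS}(2)$-relators, and the verification of the universal property via the universal property of free groups, is exactly that standard argument and is airtight; note also that $I$-equivalence does preserve membership in $W(I)$ (since $b\equiv 0$ and $y\in I$ keep the congruence conditions intact), so the relators are well-posed.

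The one place where you are compressing a genuine lemma into a single sentence is the claim that any two pairs $(b_1,a_1),(b_2,a_2)\in W(I)$ are $I$-equivalent to pairs $(b_1',a),(b_2',a)$ with a \emph{common} right coordinate. This is true, but it is not a formal manipulation: in the Bass--Milnor--Serre/Morris treatment it is obtained in two steps --- first each pair is moved into $W(J)$ for a principal ideal $J=dR\subset I$ (this uses that every proper quotient of $R$ has stable range $1$, i.e.\ the stable range $\leq 3/2$ hypothesis; \cite[Lemma~2.3]{MR244257}), and only then are the right coordinates equalized, producing pairs of the form $(c_i d,a)$ \cite[Lemma~2.4]{MR244257}. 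The present paper itself replays precisely this two-lemma chain in the proof of Proposition~\ref{technical_square_prop}, so you have correctly located where the stable range enters, but as written your step is an assertion rather than a proof. Once those two lemmas are supplied, your computation
\begin{equation*}
\bracenom{b_1'}{a}_I\bracenom{b_2'}{a}_I=\bracenom{b_1'b_2'}{a}_I=\bracenom{b_2'b_1'}{a}_I=\bracenom{b_2'}{a}_I\bracenom{b_1'}{a}_I
\end{equation*}
does show that the generators commute pairwise, and since they generate $M_0(I)$, abelianness follows; so the proposal is correct in outline, with this single unproven (though citable) step as its only gap.
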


\begin{remark}
Stable range will only be defined in Definition~\ref{first_order_def}.
\end{remark}

\subsection{Simply-connected split Chevalley groups and their root elements}\label{Chevalley}

We will not give a complete definition of simply-connected split Chevalley groups and their root elements. An account of them as $\mathbb{Z}$-defined linear group schemes is given in \cite[Section~2.1]{General_strong_bound}. For a more complete description please consider also \cite{MR1611814} and \cite[Theorem~1, Chapter~1, p.7; Theorem~6(e), Chapter~5, p.38; Lemma~27, Chapter~3, p.~29]{MR3616493}. 

For the purposes of this paper, we merely note that for each irreducible root system $\Phi$ there is a unique simply-connected, split, $\mathbb{Z}$-defined group scheme $G(\Phi,\cdot)$ such that for an algebraically closed field $K$ the root system of the Lie algebra associated to the linear algebraic group $G(\Phi,K)$ is (isomorphic to) $\Phi.$ There is a well-known classification of all irreducible root systems in terms of Dynkin-diagrams \cite[Appendix]{MR0396773}. The group schemes $G(\Phi,\cdot)$ encompass many classical matrix groups: For example, $G(A_n,R)={\rm SL}_{n+1}(R)$ and $G(C_n,R)={\rm Sp}_{2n}(R)$ holds for a commutative ring $R$ and $n\geq 2.$ 

For us the important part is that $G(\Phi,\cdot)$ can also be defined using polynomial equations defined over $\mathbb{Z}$: For each irreducible, root system $\Phi$, there is a positive integer $n_{\Phi}$ and a finite set of polynomials $P\subset\mathbb{Z}[y_{i,j}\mid 1\leq i,j\leq n_{\Phi}]$ such that 
\begin{equation*}
G(\Phi,R)=\{A\in R^{n_{\Phi}\times n_{\Phi}}\mid \forall p\in P:p(a_{i,j})=0\}
\end{equation*}
holds. The important point is that membership in $G(\Phi,R)$ is a property of commutative rings with $1$ describable in first order terms. The equations given by the set $P$ correspond to a faithful, linear representation of the linear algebraic group in question. For example, for $\Phi=C_2$ or $G_2$ the classical representations of $G(C_2,R)={\rm Sp}_4(R)\subset{\rm GL}_4(R)$ or $G(G_2,R)=G_2(R)\subset{\rm GL}_8(R)$ found in the papers \cite{MR1162432,MR1487611} by Keller and Costa can be used. These descriptions as matrix groups can also be used to define so-called level ideals:

\begin{definition}
\label{central_elements_def}
Let $R$ be a commutative ring with $1$ and let $A\in G(\Phi,R)$ be given for $\Phi=C_2$ or $G_2$. The \textit{level ideal $l(A)$} is defined  
\begin{enumerate}
\item{in case $\Phi=C_2$ as $l(A):=(a_{i,j},(a_{i,i}-a_{j,j})|1\leq i\neq j\leq 4).$}
\item{in case $\Phi=G_2$ as $l(A):=(a_{i,j},(a_{i,i}-a_{j,j})|1\leq i\neq j\leq 8).$}
\end{enumerate}
For $T\subset G(\Phi,R),$ we define $l(T):=\sum_{A\in T}l(A).$
\end{definition}

\begin{remark}
There are of course definitions of level ideals for the other root systems, too, but the two cases of $\Phi=C_2$ and $G_2$ are the only ones which are needed later in the paper.
\end{remark}

For a ring homomorphism $f:R\to S$, we denote the corresponding group homomrphism $G(\Phi,R)\to G(\Phi,S)$ also by $f.$ In this context, for a proper ideal $I$ of $R,$ we denote the group homomorphism $G(\Phi,R)\to G(\Phi,R/I)$ induced by the quotient map $R\to R/I$ by $\pi_I.$

Next, we note that fixing a ring $R$ there are specific elements called \textit{root elements} of $G(\Phi,R)$. We will not define them either, again referring to \cite[Section~2.1]{General_strong_bound} for more details, but note that for each root $\phi\in\Phi$ there is a group isomorphism
\begin{equation*}
\varepsilon_{\phi}:(R,+)\to G(\Phi,R),x\mapsto\varepsilon_{\phi}(x)
\end{equation*}
definable in first order terms. The elements $\varepsilon_{\phi}(x)$ are called root elements of $G(\Phi,R)$ and the subgroup $\{\varepsilon_{\phi}(x)\mid x\in R\}$ is called the \textit{root subgroup associated to the root $\phi.$} Supressing the root system $\Phi$ and the ring $R$ in question, we usually denote the set of all root elements of $G(\Phi,R)$ by ${\rm EL}.$ Furthermore, we denote the subgroup of $G(\Phi,R)$ generated by its root elements by $E(\Phi,R).$ It is called the \textit{elementary subgroup of $G(\Phi,R).$}

Next, we introduce bounded generation by root elements, the central concept of the paper:

\begin{definition}
Let $R$ be a commutative ring with $1$ and $\Phi$ an irreducible root system. Then $E(\Phi,R)$ is called \textit{boundedly generated by root elements}, if there is a natural number $N(\Phi,R)$ such that each element of $E(\Phi,R)$ can be written as a product of at most $N(\Phi,R)$ root elements. In this context, we define the following function:
\begin{align*}
|\cdot|_{EL}:E(\Phi,R)\to\mathbb{N}_0,X\mapsto
\begin{cases}
\min\{n\in\mathbb{N}\mid X\text{ is a product of }n\text{ root elements}\}&\text{ ,if }X\neq 1\\
0 &\text{ ,if }X=1
\end{cases}
\end{align*}
Set further $|E(\Phi,R)|_{EL}:=\max\{|X|_{EL}\mid X\in E(\Phi,R)\}.$
\end{definition}

\begin{remark}
We often also say that the group $G(\Phi,R)$ is \textit{boundedly generated by root elements}. This simply means that $G(\Phi,R)=E(\Phi,R)$ holds and $E(\Phi,R)$ is boundedly generated by root elements. Then we also use the notation $|G(\Phi,R)|_{EL}$ for $|E(\Phi,R)|_{EL}.$
\end{remark}

Additionally, the root elements associated to various distinct roots of $\Phi$ satisfy the commutator relations stated in the following lemma:

\begin{lemma}\cite[Proposition~33.2-33.5]{MR0396773}
\label{commutator_relations}
Let $\Phi$ be an irreducible root system and let be $R$ a commutative ring with $1$. Further, let $a,b\in R$ and $\alpha,\beta\in\Phi$ be given with $\alpha+\beta\neq 0$.
\begin{enumerate}
\item{If $\alpha+\beta\notin\Phi$, then $(\varepsilon_{\alpha}(a),\varepsilon_{\beta}(b))=1.$}
\item{If $\alpha,\beta$ are positive, simple roots in a root subsystem of $\Phi$ isomorphic to $A_2$, then\\ 
$(\varepsilon_{\beta}(b),\varepsilon_{\alpha}(a))=\varepsilon_{\alpha+\beta}(\pm ab).$}
\item{If $\alpha,\beta$ are positive, simple roots in a root subsystem of $\Phi$ isomorphic to $C_2$ with $\alpha$ short and $\beta$ long, then
\begin{align*}
&(\varepsilon_{\alpha+\beta}(b),\varepsilon_{\alpha}(a))=\varepsilon_{2\alpha+\beta}(\pm 2ab)\text{ and}\\
&(\varepsilon_{\beta}(b),\varepsilon_{\alpha}(a))=\varepsilon_{\alpha+\beta}(\pm ab)\varepsilon_{2\alpha+\beta}(\pm a^2b).
\end{align*}
}
\item{If $\Phi=G_2$ and $\alpha,\beta$ are positive simple roots with $\alpha$ short and $\beta$ long in $\Phi$, then 
\begin{align*}
&(\varepsilon_{\beta}(b),\varepsilon_{\alpha}(a))=\varepsilon_{\alpha+\beta}(\pm ab)\varepsilon_{2\alpha+\beta}(\pm a^2b)\varepsilon_{3\alpha+\beta}(\pm a^3b)
\varepsilon_{3\alpha+2\beta}(\pm a^3b^2),\\
&(\varepsilon_{\alpha+\beta}(b),\varepsilon_{\alpha}(a))=
\varepsilon_{2\alpha+\beta}(\pm 2ab)\varepsilon_{3\alpha+\beta}(\pm 3a^2b)\varepsilon_{3\alpha+2\beta}(\pm 3ab^2),\\
&(\varepsilon_{2\alpha+\beta}(b),\varepsilon_{\alpha}(a))=\varepsilon_{3\alpha+\beta}(\pm 3ab),\\
&(\varepsilon_{3\alpha+\beta}(b),\varepsilon_{\beta}(a))=\varepsilon_{3\alpha+\beta}(\pm ab)\text{ and}\\
&(\varepsilon_{2\alpha+\beta}(b),\varepsilon_{\alpha+\beta}(a))=\varepsilon_{3\alpha+2\beta}(\pm 3ab).
\end{align*}
}
\end{enumerate}
\end{lemma}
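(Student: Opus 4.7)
The plan is to reduce each identity to an assertion about rank-at-most-$2$ root subsystems and then extract the commutators from the Chevalley commutator formula applied to explicit structure constants of the corresponding complex simple Lie algebra.

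First I would reduce to the universal setting. Both sides of each asserted equality are, in a faithful $\mathbb{Z}$-defined linear representation of $G(\Phi,\cdot)$, polynomials with integer coefficients in the indeterminates $a,b$. Hence it suffices to verify the identities in $G(\Phi,\mathbb{Z}[a,b])$, and by base change to $\mathbb{C}$ and flatness it suffices to check them inside $G(\Phi,\mathbb{C})$ after specialising $a,b$ to arbitrary complex numbers. In this analytic setting the root elements are genuine exponentials $\varepsilon_\alpha(x)=\exp(x E_\alpha)$ of a fixed Chevalley basis of the complex simple Lie algebra $\F{g}_\Phi$.

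Second I would reduce to rank $\leq 2$. For $\alpha,\beta\in\Phi$ with $\alpha+\beta\neq 0$, the intersection $\Psi:=(\mathbb{Z}\alpha+\mathbb{Z}\beta)\cap\Phi$ is a closed root subsystem of rank at most $2$ and must therefore be of type $A_1\sqcup A_1$, $A_2$, $B_2=C_2$ or $G_2$; the commutator $(\varepsilon_\beta(b),\varepsilon_\alpha(a))$ automatically lies in the simply-connected Chevalley subgroup $G(\Psi,R)\subseteq G(\Phi,R)$ generated by those root elements whose root belongs to $\Psi$. Parts (1)--(4) of the lemma thus correspond respectively to these four types. In each case one enumerates by hand the positive integer combinations $i\alpha+j\beta\in\Phi$: the empty set in type $A_1\sqcup A_1$, the single pair $(1,1)$ in type $A_2$, the pairs $(1,1),(2,1)$ in type $C_2$ and finally $(1,1),(2,1),(3,1),(3,2)$ in type $G_2$, which matches exactly the root indices appearing on the right-hand sides of (2)--(4).

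The heart of the matter, and the step I expect to be the most delicate, is the Chevalley commutator formula
\begin{equation*}
(\varepsilon_\beta(b),\varepsilon_\alpha(a))=\prod_{i,j\geq 1,\,i\alpha+j\beta\in\Phi}\varepsilon_{i\alpha+j\beta}\bigl(C^{\alpha,\beta}_{i,j}\,a^i b^j\bigr)
\end{equation*}
together with the evaluation of the integer coefficients $C^{\alpha,\beta}_{i,j}$. My approach would be to work inside a faithful representation in which each $\mathrm{ad}(E_\alpha)$ is nilpotent (so the exponential series truncate), expand $\exp(bE_\beta)\exp(aE_\alpha)\exp(-bE_\beta)\exp(-aE_\alpha)$ formally, and rearrange the result using the Serre-type relations $\mathrm{ad}(E_\alpha)^{k+1}(E_\beta)=0$, where $-k$ is determined by the $\alpha$-string through $\beta$. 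Case (1) is immediate since the index set is empty when $\alpha+\beta\notin\Phi$, so the product collapses and the commutator vanishes. In parts (2)--(4) the combinatorial factors $\pm 1,\pm 2,\pm 3$ emerge as binomial/factorial coefficients coming from repeated brackets such as $[E_\alpha,[E_\alpha,E_\beta]]$ and $[E_\alpha,[E_\alpha,[E_\alpha,E_\beta]]]$ in the $C_2$ and $G_2$ cases; the signs are not uniquely specified since they depend on the chosen normalisation of the Chevalley basis, which is why they are absorbed into the ``$\pm$'' notation. Once the identities are established in $G(\Phi,\mathbb{C})$ with complex parameters, the universal-coefficient observation of the first step transports them to arbitrary commutative $R$.
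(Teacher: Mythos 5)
The paper does not prove this lemma at all: it is quoted verbatim from the literature (Humphreys, \emph{Linear Algebraic Groups}, \S 33.2--33.5, equivalently Steinberg's Lemma~15 and its corollaries), so there is no in-paper argument to compare against. What you have written is, in substance, exactly the standard proof found in those cited sources, and it is sound: the reduction to the universal ring $\mathbb{Z}[a,b]$ via the $\mathbb{Z}$-defined faithful representation, the passage to $\mathbb{C}$ (a polynomial identity over $\mathbb{Z}$ holds iff it holds under all complex specialisations), and the evaluation of the Chevalley commutator formula by expanding truncated exponentials of a Chevalley basis, with the coefficients $\pm 2,\pm 3$ produced by the root strings $\mathrm{ad}(E_{\alpha})^{i}(E_{\beta})$ and the signs left ambiguous because they depend on the normalisation of the basis --- this is precisely how the result is established there. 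Two small imprecisions, neither fatal: first, your clean correspondence ``four types of $\Psi=(\mathbb{Z}\alpha+\mathbb{Z}\beta)\cap\Phi$ $\leftrightarrow$ items (1)--(4)'' is slightly off for several formulas in item (4), e.g.\ for the pair $(\alpha+\beta,\,2\alpha+\beta)$ the lattice $\mathbb{Z}(\alpha+\beta)+\mathbb{Z}(2\alpha+\beta)$ is the full root lattice of $G_2$, so $\Psi=G_2$ even though only the single root $3\alpha+2\beta$ occurs in the product; what governs each identity is the index set $\{i\gamma+j\delta\in\Phi\}$ and the string lengths for the \emph{specific} pair (which is also why that commutator carries the coefficient $\pm 3$ rather than the $\pm 1$ one might guess from ``only one positive combination''), but your method computes exactly this, so no harm is done. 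Second, the subgroup generated by the root subgroups for roots in $\Psi$ need not literally be the simply-connected $G(\Psi,R)$, only a homomorphic image of it; since your computation takes place at the level of the root elements themselves, this does not affect the argument, but the phrasing should be softened.
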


We also define certain congruence subgroups and some other notions that we need later on. 

\begin{definition}
Let $\Phi$ be an irreducible root system and let $R$ be a commutative ring with $1$ and $I$ an ideal of $R$. 
\begin{enumerate}
\item{The subgroup $E(\Phi,R,I)\subset G(\Phi,R)$ is defined as the normal subgroup of $G(\Phi,R)$ generated by the set $\{\varepsilon_{\phi}(x)\mid x\in I,\phi\in\Phi\}$.}
\item{The subgroup $C(\Phi,R,I)\subset G(\Phi,R)$ is defined as the kernel of the group homomorphism $\pi_I:G(\Phi,R)\to G(\Phi,R/I)$ if $I\neq R$ and as $C(\Phi,R,R):=G(\Phi,R)$ otherwise.}
\end{enumerate}
\end{definition}

\begin{remark}
As the root elements $\varepsilon_{\phi}(x)$ vanish under $\pi_I$ for $x\in I$ and $\phi\in\Phi,$ the inclusion $E(\Phi,R,I)\subset C(\Phi,R,I)$ holds.
\end{remark}

We also recall the word norms studied in Section~\ref{Strong_bound_section} and \ref{lower_bound_section}: 

\begin{definition}\label{conjugation_gen_word+basic_def}
Let $G$ be a group.
\begin{enumerate}
\item{We define $A^B:=BAB^{-1}$ and $(A,B):=ABA^{-1}B^{-1}$ for $A,B\in G$.}
\item{We denote two elements $A,B\in G$ being conjugate in $G$, by writing $A\sim B.$}
\item{For $T\subset G$, we define $\dl T\dr$ as the smallest normal subgroup of $G$ containing $T.$}
\item{A subset $T\subset G$ is called a \textit{normally generating set} of $G$, if $\dl T\dr=G$.}
\item{For $k\in\mathbb{N}$ and $T\subset G$ denote by 
\begin{equation*}
B_T(k):=\bigcup_{1\leq i\leq k}\{x_1\cdots x_i\mid\forall j\leq i: x_j\text{ conjugate to }A\text{ or } A^{-1}\text{ in }G\text{ and }A\in T\}\cup\{1\}.
\end{equation*}
Further set $B_T(0):=\{1\}.$ If $T$ only contains the single element $A$, then we write $B_A(k)$ instead of $B_{\{A\}}(k)$.}
\item{Define for a set $T\subset G$ the \textit{conjugation invariant word norm} $\|\cdot\|_T:G\to\mathbb{N}_0\cup\{+\infty\}$ by 
$\|A\|_T:=\min\{k\in\mathbb{N}_0|A\in B_T(k)\}$ for $A\in\dl T\dr$ and by $\|A\|_T:=+\infty$ for $A\notin\dl T\dr.$ The diameter 
$\|G\|_T={\rm diam}(\|\cdot\|_T)$ of $G$ is defined as the minimal $N\in\mathbb{N}$ such that $\|A\|_T\leq N$ for all $A\in G$ or as $+\infty$ if there is no such $N$.}
\item{Define for $l\in\mathbb{N}$ the invariant 
\begin{equation*}
\Delta_l(G):=\sup\{{\rm diam}(\|\cdot\|_T)|\ T\subset G\text{ with }|T|\leq l,\dl T\dr=G\}\in\mathbb{N}_0\cup\{\pm\infty\}
\end{equation*}
with $\Delta_k(G)$ defined as $-\infty$, if there is no normally generating set $T\subset G$ with $|T|\leq l.$ 
}
\item{The group $G$ is called \textit{strongly bounded}, if $\Delta_l(G)<+\infty$ for all $l\in\mathbb{N}$.}
\end{enumerate}
\end{definition}

Using this we also introduce the following 

\begin{definition}
Let $R$ be a commutative ring with $1$, $\Phi$ be an irreducible root system and $\phi\in\Phi.$ Further, let $T\subset G(\Phi,R)$ be given. For $k\in\mathbb{N}_0$ set $\varepsilon(T,\phi,k):=\{x\in R|\varepsilon_{\phi}(x)\in B_T(k)\}.$
\end{definition}

Before continuing, we will define the Weyl group elements in $G(\Phi,R)$:

\begin{definition}
Let $R$ be a commutative ring with $1$ and let $\Phi$ be a root system. Define for $\phi\in\Phi$ the element $w_{\phi}:=\varepsilon_{\phi}(1)\varepsilon_{-\phi}(-1)\varepsilon_{\phi}(1).$
\end{definition} 

Using these Weyl group elements, we can obtain the following lemma:

\begin{lemma}\cite[Lemma~20(b), Chapter~3, p.~23]{MR3616493}
Let $R$ be a commutative ring with $1$ and $\Phi$ an irreducible root system. Let $\phi\in\Phi,\alpha\in\Pi$ and $x\in R$ be given.
Then for each normally generating set $T$ of $G(\Phi,R)$ one has 
\begin{equation*}
\|\varepsilon_{\phi}(x)\|_T=\|\varepsilon_{w_{\alpha}(\phi)}(x)\|_T.
\end{equation*}   
Here, the element $w_{\alpha}(\phi)\in\Phi$ is defined by the action of the Weyl group $W(\Phi)$ on $\Phi.$ 
\end{lemma}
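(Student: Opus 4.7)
The plan is to reduce the equality of word norms to the classical fact that the Weyl group elements $w_\alpha$ realize the Weyl group action on root subgroups up to sign, and then use that $\|\cdot\|_T$ is invariant under both conjugation and inversion.

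First I would recall the cited Steinberg formula (Lemma~20(b), Chapter~3 in \cite{MR3616493}): for $\alpha\in\Pi$, $\phi\in\Phi$, and $x\in R$, conjugation by $w_\alpha$ satisfies
\begin{equation*}
w_\alpha\,\varepsilon_\phi(x)\,w_\alpha^{-1}=\varepsilon_{w_\alpha(\phi)}(c_{\alpha,\phi}\,x),
\end{equation*}
where $c_{\alpha,\phi}\in\{+1,-1\}$ is a sign depending only on the pair $(\alpha,\phi)$ (and not on $x$ or on $R$). This is where the Weyl group acts on the index set of root subgroups, sending $\phi$ to $w_\alpha(\phi)$.

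Next I would exploit the two built-in invariances of the conjugation-invariant word norm $\|\cdot\|_T$ from Definition~\ref{conjugation_gen_word+basic_def}. Since the balls $B_T(k)$ are defined using conjugates of elements of $T\cup T^{-1}$, they are closed under conjugation by arbitrary elements of $G(\Phi,R)$, and hence $\|HAH^{-1}\|_T=\|A\|_T$ for every $H\in G(\Phi,R)$. Applying this with $H=w_\alpha$ and $A=\varepsilon_\phi(x)$ gives
\begin{equation*}
\|\varepsilon_\phi(x)\|_T=\|w_\alpha\,\varepsilon_\phi(x)\,w_\alpha^{-1}\|_T=\|\varepsilon_{w_\alpha(\phi)}(c_{\alpha,\phi}\,x)\|_T.
\end{equation*}
If $c_{\alpha,\phi}=+1$ we are done. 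If $c_{\alpha,\phi}=-1$, then because $\varepsilon_{w_\alpha(\phi)}:(R,+)\to G(\Phi,R)$ is a group homomorphism we have $\varepsilon_{w_\alpha(\phi)}(-x)=\varepsilon_{w_\alpha(\phi)}(x)^{-1}$. The second invariance, $\|A^{-1}\|_T=\|A\|_T$, follows directly from the symmetric definition of $B_T(k)$ via $T\cup T^{-1}$ (so inverting a product of conjugates of elements of $T\cup T^{-1}$ yields another product of the same length of the same type). Combining these two invariances yields the desired equality in all cases.

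There is no serious obstacle here; the only point to be careful about is that the sign $c_{\alpha,\phi}$ really does only depend on $(\alpha,\phi)$ and is independent of $x$, so that it can be absorbed via inversion of $\varepsilon_{w_\alpha(\phi)}(x)$ rather than requiring $R$ to contain $-1$ in some nontrivial way. This is guaranteed by the standard Chevalley construction recalled in Subsection~\ref{Chevalley}, since the structure constants entering the Weyl conjugation formula are integers.
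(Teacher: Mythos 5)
Your proof is correct and is essentially the argument the paper intends: the lemma is quoted directly from Steinberg, whose Lemma~20(b) is exactly the conjugation formula $w_{\alpha}\,\varepsilon_{\phi}(x)\,w_{\alpha}^{-1}=\varepsilon_{w_{\alpha}(\phi)}(\pm x)$ with a sign independent of $x$, and the norm equality then follows from the conjugation-invariance and inversion-invariance of $\|\cdot\|_T$ exactly as you describe. Your handling of the sign $c_{\alpha,\phi}=-1$ via $\varepsilon_{w_{\alpha}(\phi)}(-x)=\varepsilon_{w_{\alpha}(\phi)}(x)^{-1}$ is the right (and in characteristic $2$, vacuous) final step.
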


\section{Bounded generation by root elements in positive characteristic}
\label{proof_main}

In this section, we will prove Theorem~\ref{main_thm}. As is often the case for theorems concerning Chevalley groups $G(\Phi,R)$, the proof will split in various subcases depending on the root system $\Phi$ in question. More precisely, the proof will be done as follows: In the first subsection, we will use a Theorem by Morris \cite[Theorem~3.11]{MR2357719} to derive the finiteness of the universal Mennicken groups $M(I)$ for the ring of integers $R$ of a global function field $K$ and a non-zero ideal $I$ in $R$. Then in the second subsection, we will use this result together with a couple of additional first order properties to deduce that the groups $G(A_2,R)/E(A_2,R)$ and $G(C_2,R)/E(C_2,R)$ are finite. This will (almost) yield Theorem~\ref{main_thm} in case of $\Phi=A_2$ and $C_2$. Disregarding $\Phi=G_2,$ the general case can be reduced to those two cases by invoking a result due to Tavgen \cite[Proposition~1]{MR1044049}. This reduction will be the subject of the fourth subsection; in the fourth subsection we will also resolve the remaining case of $\Phi=G_2$. The key observation underlying all the arguments in this section is the following compactness theorem:

\begin{theorem}\cite[Theorem~2.8]{MR2357719}\label{compactness_bounded}
Let $\Phi$ be an irreducible root system and let $\C T$ be a first order theory of a language $\C L$ containing at least
\begin{enumerate}
\item{a predicate symbol $\C P$ and}
\item{$n_{\Phi}^2$ predicate symbols $\C Q_{i,j}$ for $1\leq i,j\leq n_{\Phi}$ where $n_{\Phi}$ is chosen as in Section~\ref{Chevalley}}
\end{enumerate}
Assume that $\C T$ has the following two properties:
\begin{enumerate}
\item{The universe $R$ of each model $\C M$ of $\C T$ is a commutative ring with $1$ such that $I:=\{x\in R\mid\C P^{\C M}(x)\}$ is a non-zero ideal of $R$.}
\item{For $R,I$ and $\C M$ as in $(1),$ the set $\{A=(a_{i,j})\in R^{n_{\Phi}\times n_{\Phi}}\mid\bigwedge_{1\leq i,j\leq n_{\Phi}}\C Q^{\C M}_{i,j}(a_{i,j})\}$ generates $E(\Phi,R,I),$}
\item{For $R$ and $I$ as in $(1)$, the set $C(\Phi,R,I)/E(\Phi,R,I)$ is finite.}
\end{enumerate}
Then for any universe $S$ of a model $\C N$ of $\C T$ with associated ideal $J:=\{x\in S\mid\C P^{\C M}(x)\}$, the set $D_J:=\{A=(a_{i,j})\in S^{n_{\Phi}\times n_{\Phi}}\mid \bigwedge_{1\leq i,j\leq n_{\Phi}}\C Q^{\C N}_{i,j}(a_{i,j})\}$ boundedly generates $E(\Phi,S,J)$. In particular, the maximal number $|E(\Phi,S,J)|_{D_J}$ of factors of $D_J$ needed to write any element of $E(\Phi,S,J)$ has an upper bound only depending on $\C T.$ 
\end{theorem}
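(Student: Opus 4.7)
The plan is to prove the theorem by contradiction, using a compactness / ultraproduct argument applied to $\C T$. Assume no uniform bound $N=N(\C T)$ exists; then for every $k\in\mathbb{N}$ we may select a model $\C M_k\models\C T$ with universe $R_k$, associated ideal $I_k$, and a witness $X_k\in E(\Phi,R_k,I_k)$ that cannot be written as a product of fewer than $k$ elements of $D_{I_k}$.

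Two ingredients are first-order in the language $\C L$. First, membership in $C(\Phi,R,I)$ is first-order: $G(\Phi,\cdot)$ is cut out by finitely many polynomial identities in the matrix entries, and the condition $\pi_I(X)=1$ is expressed via $\C P$ as $\C P(a_{i,j})$ for $i\neq j$ together with $\C P(a_{i,i}-1)$ for all $i$. Second, for each fixed $k\in\mathbb{N}$ the assertion ``$X$ is a product of at most $k$ elements of $D_I$'' is expressed by the $\C L$-formula
\begin{equation*}
\psi_k(X):=\exists A^{(1)},\dots,A^{(k)}\;\Bigl(\bigwedge_{\ell=1}^{k}\bigwedge_{i,j}\C Q_{i,j}(A^{(\ell)}_{i,j})\Bigr)\wedge X=A^{(1)}\cdots A^{(k)},
\end{equation*}
with matrix multiplication spelled out via the defining polynomials of $G(\Phi,\cdot)$.

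Fix a non-principal ultrafilter $\C U$ on $\mathbb{N}$ and form the ultraproduct $\C N:=\prod_k\C M_k/\C U$, which by {\L}o\'s's theorem is a model of $\C T$ with some universe $S$ and ideal $J$. The ultraproduct class $X:=[X_k]_{\C U}$ defines a matrix in $S^{n_\Phi\times n_\Phi}$. Since $X_k\in C(\Phi,R_k,I_k)$ for all $k$ and $C$ is first-order, {\L}o\'s gives $X\in C(\Phi,S,J)$. On the other hand, for every fixed $k_0$ the index set $\{k:\C M_k\models\psi_{k_0}(X_k)\}$ is contained in $\{1,\dots,k_0\}$, hence is finite and thus not in $\C U$; {\L}o\'s then yields $\C N\models\neg\psi_{k_0}(X)$ for every $k_0$. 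Consequently $X$ is not a finite product of elements of $D_J$, and hypothesis $(2)$ forces $X\notin E(\Phi,S,J)$.

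The main obstacle is closing the argument, since at this stage $X$ lies in $C(\Phi,S,J)\setminus E(\Phi,S,J)$, which on its own is merely consistent with the finiteness of $C/E$ in hypothesis $(3)$. To convert this into a genuine contradiction, the plan is to run the preceding argument not with a single witness but with an $(h+1)$-tuple $X_k^{(1)},\dots,X_k^{(h+1)}\in E(\Phi,R_k,I_k)$ of witnesses simultaneously violating $\psi_k$, where $h$ is an upper bound on $|C(\Phi,S,J)/E(\Phi,S,J)|$ that one first extracts from hypothesis $(3)$ by a preliminary compactness argument. A pigeonhole on cosets of $E(\Phi,S,J)$ in $C(\Phi,S,J)$ then forces two of the ultraproduct classes to share a coset, so that their ratio lies in $E(\Phi,S,J)$, and provided the witnesses are chosen so that all pairwise ratios also fail every $\psi_{k_0}$, {\L}o\'s produces an element of $E(\Phi,S,J)$ that is not a finite product of elements of $D_J$, contradicting hypothesis $(2)$. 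Making this tuple choice watertight — in particular arranging the pairwise ratios to have unbounded factor length — is the technical heart of the argument and is exactly where hypothesis $(3)$ on the finiteness of $C/E$ enters essentially.
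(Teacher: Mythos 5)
You should first note that the paper does not prove this statement at all: it is imported verbatim from Morris \cite{MR2357719}, and your ultraproduct strategy is the same compactness argument in spirit as the Carter--Keller--Paige/Morris original. Your first-order reductions are all correct: membership in $C(\Phi,R,I)$ is expressible via $\C P$ applied to the entries of $X-I_{n_\Phi}$, each $\psi_{k_0}$ is an $\C L$-formula (you should allow inverses of elements of $D_I$ in $\psi_{k_0}$ unless $D_I$ is symmetric, which it is in the paper's applications), {\L}o\'s gives $X=[X_k]_{\C U}\in C(\Phi,S,J)$ failing every $\psi_{k_0}$, and hypothesis $(2)$ applied to $\C N$ then yields $X\notin\langle D_J\rangle=E(\Phi,S,J)$. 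You also correctly diagnose that this alone contradicts nothing, since $X\in C\setminus E$ is consistent with $(3)$.

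The genuine gap is that you leave the decisive step --- producing witnesses whose pairwise ratios have unbounded $D$-length --- as an unexecuted ``technical heart,'' and your proposed route through powers or generic tuples would not obviously work (word length can collapse under taking powers). The standard fix is the geodesic-prefix trick, which closes the argument and simplifies your plan. By hypothesis $(2)$, each witness $X_k\in E(\Phi,R_k,I_k)=\langle D_{I_k}\rangle$ admits a factorization of minimal length, $X_k=d_1\cdots d_{N_k}$ with $N_k:=|X_k|_{D_{I_k}}\geq k$ finite. For the prefixes $P^{(k)}_t:=d_1\cdots d_t$ minimality and subadditivity give $|P^{(k)}_t|_{D_{I_k}}=t$ exactly and $|(P^{(k)}_s)^{-1}P^{(k)}_t|_{D_{I_k}}=t-s$ for $s<t$, since $N_k\leq |P^{(k)}_s|+|(P^{(k)}_s)^{-1}P^{(k)}_t|+(N_k-t)$. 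Now set $Z_j:=[P^{(k)}_{\min(j\lfloor\sqrt{N_k}\rfloor,\,N_k)}]_{\C U}$ for all $j\in\mathbb{N}$: each $Z_j$ lies in $C(\Phi,S,J)$ by {\L}o\'s, while for each fixed pair $i<j$ the ratio $(Z_i)^{-1}Z_j$ has length $(j-i)\lfloor\sqrt{N_k}\rfloor\geq\lfloor\sqrt{k}\rfloor$ at cofinitely many indices, hence fails every $\psi_{k_0}$ in $\C N$ and lies outside $E(\Phi,S,J)$. So the $Z_j$ occupy infinitely many distinct cosets of $E(\Phi,S,J)$ in $C(\Phi,S,J)$, contradicting hypothesis $(3)$ applied to the ultraproduct model directly. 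Note two consequences for your outline: your preliminary extraction of a uniform bound $h$ on $|C/E|$ from $(3)$ becomes unnecessary (countably many scaled prefixes replace the pigeonhole on $h+1$ witnesses; your extraction would in any case be another instance of the same {\L}o\'s argument, and it does go through), and hypothesis $(3)$ enters only at the final coset count --- arranging pairwise ratios of unbounded length is a pure word-metric fact about geodesic prefixes and owes nothing to $(3)$, contrary to your closing remark.
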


\subsection{Bounding the cardinality of the universal Mennicken group} 

In order to apply Theorem~\ref{compactness_bounded}, we need a couple of first-order properties of rings defined by Morris \cite[Definition~3.2,3.6]{MR2357719}. However, in order to introduce them we need yet another definition:

\begin{definition}
Let $R$ be a commutative ring with $1$ and $a\in R$ given. Then the group $U(aR)$ is defined as the group of units of the ring $R/aR,$ if $a$ is not a unit in $R$ and as the trivial group if $a$ is a unit in $R.$ Furthermore, we define $e(aR)$ as the exponent ${\rm exp}(U(aR))$.
\end{definition}

\begin{remark}
Recall, that the exponent of a group $G$ is defined as the smallest $n\in\mathbb{N}$ such that $g^n=1_G$ holds for all $g\in G$ or as $+\infty$ if there is no such $n.$
\end{remark}

Next, we define the required first-order properties:

\begin{definition}\label{first_order_def}
Let $R$ be a commutative ring with $1$ and $t,r,m$ and $l$ be natural numbers. Then $R$ is said to  
\begin{enumerate}
\item{have \textit{(Bass) stable range} at most $m$, if for $a_0,a_1,\dots,a_m\in R$ with $a_0R+\cdots+a_mR=R,$ there are elements $x_1,\dots,x_m\in R$ such that 
$(a_1-x_1a_0)R+\cdots+(a_m-x_ma_0)=R.$}
\item{have \textit{stable range at most $3/2,$} if for each $a\in R-R^*,$ the ring $R/aR$ has stable range at most $1.$}
\item{satisfy the property ${\rm Gen}(t,r)$, if for all $a,b\in R$ with $aR+bR=R$, there is an element $h\in a+bR$ such that $U(hR)/U(hR)^t$ can be generated by $r$ or less elements.}
\item{satisfy the property ${\rm Exp}(t,l)$, if for any $q\in R-\{0\}$ and $(b,a)\in W(qR)$, there exist $a',c,d\in R$ as well as $u_i,f_i,g_i,b'_i,d'_i$ for $1\leq i\leq l$ such that the following properties hold:
\begin{enumerate}
\item{the element $a'$ is an element of $a+bR.$}
\item{The matrix 
\begin{equation*}
\begin{pmatrix}
a' & b\\
c & d
\end{pmatrix}
\end{equation*}
is an element of $C(A_1,R,qR).$
}
\item{The matrix 
\begin{equation*}
\begin{pmatrix}
a' & b'_i\\
c & d'_i
\end{pmatrix}
\end{equation*}
is an element of $C(A_1,R,qR)$ for all $1\leq i\leq l.$}
\item{The matrix 
\begin{equation*}
f_iI_2+g_i\cdot
\begin{pmatrix}
a' & b'_i\\
c & d'_i
\end{pmatrix}
\end{equation*}
is an element of $C(A_1,R,qR)$ for all $1\leq i\leq l.$}
\item{The product $(f_1+g_1\cdot a')^2\cdot(f_2+g_2\cdot a')^2\cdots(f_l+g_l\cdot a')^2$ is an element of $(a')^t+cR.$}
\item{The element $u_i$ is a unit in $R$ and $f_i+g_i\cdot a'\equiv u_i\text{ mod } b'_iR$ holds for all $1\leq i\leq l.$}
\end{enumerate}
}
\end{enumerate}
\end{definition}

Using these properties together with the aforementioned stable range conditions, Morris proves that

\begin{theorem}\label{finiteness_mennicken}\cite[Theorem~3.11]{MR2357719}
Let $R$ be a commutative ring with $1$ and $I$ a non-zero ideal of $R.$ Further, assume that $R$ is of stable range at most $3/2$ and satisfies ${\rm Gen}(t,r)$ and ${\rm Exp}(t,l)$ for positive integers $t,r$ and $l.$ Then the universal Mennicken group $M(I)$ is finite and its order is bounded above by $t^r.$
\end{theorem}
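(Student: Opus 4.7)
The plan is to prove two separate claims about $M(I)$: that it is generated modulo $M(I)^t$ by at most $r$ elements, and that $M(I)^t$ is trivial. These together give $|M(I)| \le t^r$. Throughout, the stable range $3/2$ assumption is used as a background hypothesis to ensure that representatives of $W(I)$ can be manipulated freely by elementary row/column operations, and in particular to justify the reciprocity-type identities on Mennicken symbols derived below.

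For the first claim, fix $(b,a) \in W(I)$ and apply $\operatorname{Gen}(t,r)$ to produce $h \in a + bR$ such that $U(hR)/U(hR)^t$ is generated by at most $r$ classes $\bar u_1,\dots,\bar u_r$. By MS(1) one has $\bracenom{b}{a}_I = \bracenom{b}{h}_I$. A standard consequence of MS(1)--(2) (together with stable range $3/2$) is that $\bracenom{b}{h}_I$ depends on $b$ only through its image in $U(hR)$ and is multiplicative there. Writing $\bar b = \bar u_1^{e_1}\cdots\bar u_r^{e_r} \cdot v^t$ in $U(hR)$, MS(2) then yields $\bracenom{b}{h}_I \equiv \prod_i \bracenom{u_i}{h}_I^{e_i} \pmod{M(I)^t}$, so $M(I)/M(I)^t$ is generated by at most $r$ elements.

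For the second claim, I would use $\operatorname{Exp}(t,l)$ to show $\bracenom{b}{a}_I^t = 1$ for every $(b,a) \in W(I)$. Given such a pair with $b \in qR$, the property produces witnesses $a', c, d, u_i, f_i, g_i, b_i', d_i'$ with $1 \le i \le l$. Condition (a) plus MS(1) replaces $a$ by $a'$. Each matrix in conditions (b)--(d) lies in $C(A_1,R,qR)$, and so encodes (up to the obstruction measured by $M(I)$ itself) an elementary row/column operation on the pair $(b,a')$; via MS(1)--(2) this yields identities in $M(I)$ relating $\bracenom{b}{a'}_I$, $\bracenom{b_i'}{a'}_I$, and the symbols with first coordinate $f_i + g_i a'$. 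Condition (f) says $f_i + g_i a' \equiv u_i$ is a unit mod $b_i'$, which trivializes the corresponding symbol (since a symbol with a unit second coordinate is $1$). Condition (e) then writes $(a')^t$ as a product $\prod_i (f_i + g_i a')^2$ mod $c$; feeding this through the identities above yields $\bracenom{b}{a'}_I^t = 1$, hence $\bracenom{b}{a}_I^t = 1$.

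The main obstacle will be the bookkeeping in the last step: extracting the clean equation $\bracenom{b}{a}_I^t = 1$ from the baroque list of conditions defining $\operatorname{Exp}(t,l)$. The difficulty is that $C(A_1,R,qR)$ strictly contains $E(A_1,R,qR)$ in general---the discrepancy being essentially $M(I)$---so each matrix identity in $C(A_1,R,qR)$ must be translated into a Mennicken-symbol identity by careful use of MS(1), rather than naive Gaussian elimination. The doubled squaring in condition (e), together with the auxiliary witnesses $b_i', d_i'$, is designed precisely to produce enough redundancy so that the residual obstructions cancel in pairs and the $t$-th power collapses to $1$.
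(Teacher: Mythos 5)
First, note that the paper does not prove this theorem at all: it is imported verbatim from Morris \cite[Theorem~3.11]{MR2357719}, so the only internal point of comparison is the closely analogous argument the paper does carry out in Proposition~\ref{technical_square_prop}, which adapts the same circle of ideas from the end of the proof of \cite[Theorem~3.7]{MR244257}. Your overall strategy --- ${\rm Exp}(t,l)$ forces $\bracenom{b}{a}_I^t=1$, ${\rm Gen}(t,r)$ controls the number of generators, and the two combine via abelianness of $M(I)$ to give $|M(I)|\leq t^r$ --- is indeed the Carter--Keller--Paige/Morris strategy, and your second claim (exponent dividing $t$), while sketchy about how conditions (b)--(f) of ${\rm Exp}(t,l)$ translate into symbol identities, is directionally faithful to the actual proof.

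The genuine gap is in your first claim. The element $h$ produced by ${\rm Gen}(t,r)$ depends on the pair $(b,a)$, so the classes $\bar u_1,\dots,\bar u_r$ generating $U(hR)/U(hR)^t$ vary with the symbol being analyzed. Showing that each individual $\bracenom{b}{a}_I$ lies, modulo $M(I)^t$, in \emph{some} $r$-generated subgroup does not show that $M(I)/M(I)^t$ is generated by $r$ elements: a priori the union of these varying subgroups could be all of an infinitely generated group. The known repair, visible in miniature in Proposition~\ref{technical_square_prop}, is a common-modulus reduction: given finitely many symbols, one uses stable range (via the analogues of \cite[Lemmas~2.3 and~2.4]{MR244257}) to rewrite them all as $\bracenom{c_id}{a}_I$ with a \emph{single} second coordinate $a$, applies ${\rm Gen}(t,r)$ once to this $a$, and then runs a counting argument showing the image of the (suitably twisted) map $x\mapsto\bracenom{xd}{a}_I$ meets at most $t^r$ cosets --- contradicting the assumption that $|M(I)|>t^r$. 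Your auxiliary claim that $\bracenom{b}{h}_I$ ``depends on $b$ only through its image in $U(hR)$'' is also false as stated: ${\rm MS}(1)$ only permits $b\mapsto b+yh$ with $y\in I$, and first coordinates must lie in $I$, so the units $u_i$ are not legal first coordinates; this is exactly what the twist $w=(d')^2d$ in the proof of Proposition~\ref{technical_square_prop} is designed to circumvent, and your sketch needs the same device before the decomposition $\bar b=\bar u_1^{e_1}\cdots\bar u_r^{e_r}v^t$ can be fed through ${\rm MS}(2)$.
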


\begin{remark}
It is worth pointing out, that the fact that the universal Mennicken group $M(I)$ for $I$ a non-zero ideal in a ring of integers of global function fields, is finite is not the crux of the matter here; in fact said group $M(I)$ is trivial as shown by Bass, Milnor and Serre \cite[Theorem~3.6]{MR244257}. The point is that its finiteness can be derived from first order conditions alone, as this enables the application of the compactness argument, Theorem~\ref{compactness_bounded}. 
\end{remark}

Then he applied Theorem~\ref{finiteness_mennicken} to rings of algebraic integers. We will apply said theorem to rings of integers in global function fields. In order to do so however, we need the following version of Dirichlet's (or more historically accurate in this case Kornblum's) Theorem: 

\begin{theorem}\label{Dirichlet_positive_char}\cite[Theorem~(A.12)]{MR244257}
Let $K$ be a global function field that is a finite extension of $\mathbb{F}_q(T)$ and let $R:=\C O_K$ be the ring of integers in $K$ and $\mathbb{F}_q$ the field of constants of $K.$ Further, let $S_{\infty}$ be the primes covering the prime $\C P_{\infty}$ of $\mathbb{F}_q(T).$ Further, let $f,g\in\C O_K$ with $f\C O_K+g\C O_K=\C O_K$ and integers $n_{\C P}>0$ and $m_{\C P}$ be given for each $\C P\in S_{\infty}.$ Then there is an element $f'\in f+g\C O_K$ such that $f'\C O_K$ is a maximal ideal in $R$ and such that ${\rm ord}_{\C P}(f')\equiv m_{\C P}\text{ mod }n_{\C P}\mathbb{Z}$ holds for all $\C P\in S_{\infty}.$    
\end{theorem}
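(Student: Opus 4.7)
The plan is to derive the theorem from the Chebotarev density theorem for function fields applied to an appropriate ray class field of $K$. Let $\mathfrak{m} := g\mathcal{O}_K \cdot \prod_{\mathcal{P} \in S_\infty} \mathcal{P}^{N_\mathcal{P}}$ with $N_\mathcal{P}$ chosen large enough for the congruence $\mathrm{ord}_\mathcal{P}(\cdot) \equiv m_\mathcal{P} \pmod{n_\mathcal{P}}$ at each $\mathcal{P}\in S_\infty$ to be detected modulo $\mathfrak{m}$, and let $\mathrm{Cl}^\mathfrak{m}_K$ denote the associated ray class group. Since $f\mathcal{O}_K + g\mathcal{O}_K = \mathcal{O}_K$, the element $f$ defines a class in $(\mathcal{O}_K/g\mathcal{O}_K)^*$; combined with the prescribed residues $m_\mathcal{P} \pmod{n_\mathcal{P}}$ at each $\mathcal{P}\in S_\infty$ and the requirement that $f'\mathcal{O}_K$ be principal (i.e.\ trivial in the ordinary class group of $\mathcal{O}_K$), this data carves out a coset $\mathcal{C}$ in an appropriate finite quotient of $\mathrm{Cl}^\mathfrak{m}_K$.

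By global class field theory for function fields, $\mathcal{C}$ corresponds to a union of Frobenius conjugacy classes in $\mathrm{Gal}(K^\mathfrak{m}/K)$, the Galois group of the associated ray class field (a finite abelian extension of $K$). Applying the Chebotarev density theorem to $K^\mathfrak{m}/K$ then yields infinitely many prime ideals $\mathfrak{P}$ of $\mathcal{O}_K$ coprime to $\mathfrak{m}$ whose Frobenius lies in this set. Any such $\mathfrak{P}$ is principal by construction, so $\mathfrak{P} = f'\mathcal{O}_K$ for some $f'$, and the defining conditions of $\mathcal{C}$ force $f' \equiv f \pmod{g\mathcal{O}_K}$ together with $\mathrm{ord}_\mathcal{P}(f') \equiv m_\mathcal{P} \pmod{n_\mathcal{P}}$ for every $\mathcal{P}\in S_\infty$, exactly as required.

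The main obstacle is verifying that the coset $\mathcal{C}$ is nonempty, i.e.\ that the prescribed data are globally compatible. Strong approximation furnishes local realizations of the prescribed behavior at $g$ and at $S_\infty$, and the only genuine global constraint is the product formula (Lemma~\ref{class_field_formula}): by Lemma~\ref{degree_func}, any principal maximal ideal $f'\mathcal{O}_K$ satisfies $\mathrm{deg}(f'\mathcal{O}_K) = -\sum_{\mathcal{P}\in S_\infty} f(\mathcal{P}\mid R_\infty)\cdot \mathrm{ord}_\mathcal{P}(f')$, so fixing the residues $\mathrm{ord}_\mathcal{P}(f') \pmod{n_\mathcal{P}}$ only prescribes $\mathrm{deg}(f'\mathcal{O}_K)$ modulo a single integer of $\gcd$-type. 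Since maximal ideals of $\mathcal{O}_K$ occur in arbitrarily large degrees (as follows, for instance, from Kornblum's original Dirichlet-type theorem for $\mathbb{F}_q[T]$ lifted to $\mathcal{O}_K$), any such congruence class on degrees can be realized, so $\mathcal{C}$ is nonempty and the Chebotarev step concludes the proof.
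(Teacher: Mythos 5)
Your overall strategy---encode the conditions in a generalized class group, then produce a prime in the relevant class by a Chebotarev/density argument---is the right family of ideas; indeed the paper offers no proof of its own here, deferring entirely to Bass--Milnor--Serre, whose Theorem~(A.12) is proven by exactly such a density argument for characters of idelic class groups. But your execution fails at its central step: no choice of exponents $N_{\C P}$ makes the congruence ${\rm ord}_{\C P}(\cdot)\equiv m_{\C P}\ {\rm mod}\ n_{\C P}$ ``detectable modulo $\mathfrak{m}$.'' A ray class condition at a place $\C P$ constrains elements that are \emph{units} at $\C P$ (it asks $f'\equiv 1\ {\rm mod}\ \C P^{N_{\C P}}$, i.e.\ it sees only the residue of the unit part) and says nothing whatsoever about the valuation ${\rm ord}_{\C P}(f')$. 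Worse, the element you seek is never a unit at all of $S_\infty$: since $f'\in\C O_K$ and $f'\C O_K$ is maximal, Lemma~\ref{degree_func} (equivalently the product formula, Lemma~\ref{class_field_formula}) forces ${\rm ord}_{\C P}(f')<0$ for at least one $\C P\in S_\infty$; also note the $\C P\in S_\infty$ are not ideals of $\C O_K$ at all, so $\mathfrak{m}:=g\C O_K\cdot\prod\C P^{N_{\C P}}$ is not a modulus for any ideal-theoretic ray class group of $\C O_K$. The correct device is the quotient of the idele class group by the open subgroup $U$ whose component at each $\C P\in S_\infty$ is $R_{\C P}^*\cdot\pi_{\C P}^{n_{\C P}\mathbb{Z}}$ (valuation divisible by $n_{\C P}$, with $\pi_{\C P}$ a uniformizer) and whose components at places dividing $g$ are the appropriate congruence units; this quotient is finite because $U$ contains ideles of nonzero degree. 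The target coset is then the class of the idele $\alpha$ with components $f$ at $v\mid g$, $\pi_{\C P}^{m_{\C P}}$ at $\C P\in S_\infty$, and $1$ elsewhere; a prime $\mathfrak{P}$ with this class yields, upon writing the uniformizing idele at $\mathfrak{P}$ as $f'\alpha u$ with $f'\in K^*$, $u\in U$, a single element $f'$ satisfying \emph{both} conditions simultaneously. That simultaneity matters: generators of a principal $\mathfrak{P}$ differ by $\C O_K^*$, a group of rank $|S_\infty|-1$, so your two separate requirements (``principal'' and ``trivial in the ray class group mod $g$'') do not by themselves produce a generator with the prescribed valuations at $S_\infty$.

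Your nonemptiness discussion is then both misplaced and circular. In the idelic formulation there is nothing to check---the coset of $\alpha$ is nonempty by construction, and the only global constraint (the product formula) is harmless precisely because you prescribe ${\rm ord}_{\C P}(f')$ only modulo $n_{\C P}$. What does require proof is that every class of the finite quotient contains a prime, i.e.\ nonvanishing of the associated $L$-series at $s=1$ (or function-field Chebotarev, where one must account for the constant-field subextension, which ties the Frobenius of $\mathfrak{P}$ to ${\rm deg}(\mathfrak{P})$ modulo the constant degree---a point your sketch also skips). Your fallback, that ``maximal ideals of $\C O_K$ occur in arbitrarily large degrees, as follows from Kornblum's theorem lifted to $\C O_K$,'' assumes a Dirichlet-type theorem for $\C O_K$, which is a special case of the very statement being proven.
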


Using Theorem~\ref{Dirichlet_positive_char}, it is easy to deduce that a ring of integers $\C O_K$ in a global function field $K$ satisfies ${\rm Gen}(t,1):$

\begin{lemma}\label{gen_pos_char}\cite[Corollary~3.5]{MR2357719}
Let $K$ be a global function field with ring of integers $R:=\C O_K$. Then $R$ satisfies ${\rm Gen}(t,1)$ for every $t\in\mathbb{N}.$
\end{lemma}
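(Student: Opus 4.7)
The plan is to deduce this lemma essentially as an immediate corollary of Kornblum's theorem (Theorem~\ref{Dirichlet_positive_char}), reducing the computation of $U(hR)/U(hR)^t$ to the unit group of a finite field. Given $a,b\in R$ with $aR+bR=R$ and a positive integer $t$, I would apply Theorem~\ref{Dirichlet_positive_char} with $f:=a$, $g:=b$, and trivial data at the infinite primes, for instance by setting $n_{\C P}:=1$ and $m_{\C P}:=0$ for every $\C P\in S_{\infty}$, so that the congruence condition ${\rm ord}_{\C P}(f')\equiv m_{\C P}\bmod n_{\C P}\mathbb{Z}$ is vacuous. The theorem then produces an element $h:=f'\in a+bR$ with the property that $hR$ is a maximal ideal of $R$.

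With such an $h$ in hand, the rest is essentially formal. Since $hR$ is maximal in the Dedekind domain $R=\C O_K$, the quotient $R/hR$ is a finite field extension of $\mathbb{F}_q$, so $U(hR)=(R/hR)^{*}$ is cyclic. Any quotient of a cyclic group is cyclic, hence $U(hR)/U(hR)^{t}$ is generated by a single element for every $t\in\mathbb{N}$, which is exactly the content of ${\rm Gen}(t,1)$.

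There is no substantive obstacle in this argument: the work is entirely concentrated in Theorem~\ref{Dirichlet_positive_char}, which already handles the genuine difficulty specific to the function field setting, namely realizing an arithmetic progression by a principal prime ideal in the presence of several infinite primes and a potentially nontrivial class group. Once one accepts that theorem, all that is needed here is the standard fact that finite fields have cyclic multiplicative groups. The only minor point to record is that we are free to impose no congruence constraint at infinity by choosing $n_{\C P}=1$, which makes the hypothesis of Theorem~\ref{Dirichlet_positive_char} trivially satisfiable.
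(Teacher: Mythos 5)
Your proposal is correct and is essentially identical to the paper's own proof: the paper likewise applies Theorem~\ref{Dirichlet_positive_char} to produce an element $a'\in a+bR$ with $R/a'R$ a finite field, and then concludes from the cyclicity of $(R/a'R)^{*}$ that every quotient $U(a'R)/U(a'R)^{t}$ is cyclic. Your explicit choice of trivial congruence data ($n_{\C P}=1$, $m_{\C P}=0$) at the infinite primes just makes precise a point the paper leaves implicit.
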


\begin{proof}
Let $a,b\in R$ be given with $aR+bR=R.$ Then by Theorem~\ref{Dirichlet_positive_char}, there is an element $a'\in a+bR$ such that $R/a'R$ is a finite field. But then the unit group $U(a'R)$ is cyclic and so every quotient $U(a'R)/U(a'R)^t$ is also cyclic. This finishes the proof.  
\end{proof}

Deducing ${\rm Exp}(t,l)$ for the rings of integers at hand requires an intermediate step:

\begin{proposition}\label{positive_char_exp_prep}
Let $K$ be a global function field that is a finite extension of $\mathbb{F}_q(T)$ with ring of integers $R:=\C O_K$ and field of constants $\mathbb{F}_q.$ Also, set $k:=[K:\mathbb{F}_q(T)].$ Let $f,g\in R$ with $fR+gR=R, h\in R-\{0\}$ and $n\in\mathbb{N}$ be given. Then there is an element $f'\in R$ such that $f'\equiv f\text{ mod }gR, f'R+hR=R$ and such that $\gcd(e(f'R),n)$ divides $(q-1)!^{\lceil k\log_2(q)\rceil}.$
\end{proposition}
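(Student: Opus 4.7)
The approach is to apply Theorem~\ref{Dirichlet_positive_char} after a preliminary Chinese Remainder Theorem reduction, choosing the residue data at the infinite places so that the degree of $f'R$ — and hence $e(f'R) = q^{\deg(f'R)}-1$ — behaves well modulo each prime divisor of $n$.

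First, I reduce to coprimality with $h$. Let $h_1 \in R$ generate the largest factor of $hR$ coprime to $gR$; by the Chinese Remainder Theorem there exists $f_0 \in R$ with $f_0 \equiv f \pmod{gR}$ and $(f_0, h_1) = 1$. Any $f' \in f_0 + gh_1R$ then satisfies $f' \equiv f \pmod{gR}$ and $(f', h) = 1$. Applying Theorem~\ref{Dirichlet_positive_char} to $(f_0, gh_1)$ with prescribed congruences ${\rm ord}_{\C P}(f') \equiv m_{\C P} \pmod{N}$ for each $\C P \in S_\infty$ (with a common modulus $N$ to be chosen) produces $f'$ with $f'R$ a maximal ideal. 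Lemma~\ref{degree_func} then gives $d := \deg(f'R) = -\sum_{\C P \in S_\infty} f(\C P \mid R_\infty)\cdot {\rm ord}_{\C P}(f')$, so $d$ is prescribed modulo $N$ within the subgroup $\delta\mathbb{Z}/N\mathbb{Z}$, where $\delta := \gcd\{f(\C P \mid R_\infty) : \C P \in S_\infty\}$ divides $k$. Since $R/f'R \cong \mathbb{F}_{q^d}$, one has $e(f'R) = q^d - 1$.

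For each prime $\ell \mid n$ with $\ell \neq p$, set $N_\ell := {\rm ord}_\ell(q)$. Then $v_\ell(q^d - 1) > 0$ iff $N_\ell \mid d$, and in that case lifting-the-exponent gives $v_\ell(q^d - 1) = v_\ell(q^{N_\ell} - 1) + v_\ell(d/N_\ell)$ (for odd $\ell$, with a standard variant at $\ell=2$). Crucially $v_\ell(q^{N_\ell}-1) \leq v_\ell(q^k-1) \leq \log_2(q^k-1) < L$ where $L := \lceil k\log_2 q\rceil$. I take $N$ to be a common multiple of every $N_\ell$ together with large enough $\ell$-powers, and choose the $m_{\C P}$ so that (a) for every $\ell \leq q-1$ dividing $n$ one has $v_\ell(d) = v_\ell(N_\ell)$, forcing $v_\ell(d/N_\ell) = 0$ and $v_\ell(q^d-1) \leq L$; and (b) for every $\ell > q-1$ dividing $n$ with $N_\ell \nmid \delta$ one has $N_\ell \nmid d$, killing $\ell$'s contribution to $q^d - 1$ entirely. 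Since $v_\ell((q-1)!^L) \geq L$ for $\ell \leq q-1$, the constraints (a)--(b) yield $\gcd(q^d - 1, n) \mid (q-1)!^L$ in this ``generic'' situation.

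The main obstacle is the residual case of primes $\ell > q-1$ dividing $n$ with $N_\ell \mid \delta$: since $\delta \mid d$ forces $N_\ell \mid d$, the factor $\ell$ appears in $q^d - 1$ but not in $(q-1)!^L$. Every such $\ell$ divides $q^\delta - 1 \mid q^k - 1$, so the number of resistant primes is at most $\omega(q^k - 1) \leq \log_2(q^k - 1) < L$. I expect the proof handles them by refining the single Dirichlet prime to a short product $f' = \pi_1 \cdots \pi_s$ of primes obtained from separate applications of Theorem~\ref{Dirichlet_positive_char}, with the individual degrees $d_i := \deg(\pi_i R)$ arranged to satisfy $N_\ell \nmid d_i$ for every resistant $\ell$; combining this with the congruence $\prod \pi_i \equiv f_0 \pmod{gh_1 R}$ requires a Chebotarev-type density argument in $(R/gh_1 R)^*$, and securing these constraints simultaneously is where the main technical difficulty lies.
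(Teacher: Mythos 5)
Your proposal is incomplete exactly where you say it is, and the incompleteness matters: the ``resistant'' primes $\ell\mid n$ with $\ell>q-1$ and $N_\ell\mid\delta$ are left to an unproven multi-prime/Chebotarev construction, which is \emph{not} what the paper does. The paper's proof is a single application of Theorem~\ref{Dirichlet_positive_char}: it first uses stable range at most $3/2$ to replace $f$ by $f-xg$ with $(f-xg)R+ghR=R$, so that one may assume $g=h$ (note your own reduction is defective as written: the largest divisor of $hR$ coprime to $gR$ is an ideal of the Dedekind domain $R$ and need not admit a generator $h_1$; the stable-range trick, or applying Dirichlet with modulus $gh$, avoids this). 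It then prescribes ${\rm ord}_{\mathfrak{P}_{\infty,j}}(f')\equiv -e_j$ modulo ${\rm lcm}\{t_p\mid p\in S\}$, where $t_p:=\exp\bigl((\mathbb{Z}/p^{r(p)+1}\mathbb{Z})^*\bigr)$ and $r(p)$ is the exact power of $p$ in $(q-1)!^{\lceil k\log_2 q\rceil}$. By Lemma~\ref{degree_func} and $\sum_j e_jf_j=k$ this forces $q^{\deg(f'R)}\equiv q^k \bmod p^{r(p)+1}$ for every $p\in S$ prime to $q$, so if $p^{r(p)+1}$ divided $e(f'R)=q^{\deg(f'R)}-1$ one would get $q^k\equiv 1\bmod p^{r(p)+1}$, which is refuted purely by size, since $1<q^k<p^{r(p)+1}$ when either $p\geq q^k$ or $p\leq q-1$ (the latter giving $r(p)\geq\lceil k\log_2 q\rceil\geq\log_p(q^k)$). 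This congruence-to-the-full-prime-power device makes your lifting-the-exponent bookkeeping unnecessary and sidesteps a false inequality in your step (a): $v_\ell(q^{N_\ell}-1)\leq v_\ell(q^k-1)$ fails whenever $N_\ell\nmid k$ (the right-hand side is then $0$), and without it your claim $v_\ell(q^d-1)\leq L$ is unjustified, as there is no elementary bound on $v_\ell(q^{N_\ell}-1)$ beyond $\log_\ell(q^{N_\ell})$.

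That said, your diagnosis of the residual case is sharper than the paper's case split. The paper's second case asserts $r(p)\geq k\log_2 q$ ``by definition,'' which presupposes $v_p((q-1)!)\geq 1$, i.e.\ $p\leq q-1$; the range $q-1<p<q^k$ is exactly your resistant range and is not covered. It is a genuine obstruction to any single-prime construction: if, say, $q=4$, $k=2$ and the unique infinite place has $f(\mathfrak{P}\mid R_\infty)=2$, then $\delta=2$, every principal ideal has even degree by Lemma~\ref{degree_func}, hence $5\mid q^{\deg(f'R)}-1=e(f'R)$ for every $f'$ with $f'R$ maximal, while $5\nmid (q-1)!^{\lceil k\log_2 q\rceil}$ --- so for $n=5$ no output of Theorem~\ref{Dirichlet_positive_char} can satisfy the conclusion, and one would indeed need composite $f'R$ with all prime factors of odd degree, as you propose. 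The paper never confronts this because in its only application (Proposition~\ref{exp_positive_char}, following Morris's proof) the integer $n$ divides $2\cdot(q-1)!^{\lceil k\log_2 q\rceil}$, whose odd prime divisors are at most $q-1$, where the single-prime argument is complete. So to match the paper you should drop the $N_\ell$/LTE analysis and the multi-prime program, impose the order conditions modulo ${\rm lcm}\{t_p\}$, and conclude by the size argument --- while being aware that this, like the paper's own proof, only handles primes $p\leq q-1$ or $p\geq q^k$.
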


\begin{proof}
We may assume wlog that $g\neq 0$ and $gh\notin R^*.$ Further, note that $R$ is a ring of stable range at most $3/2$ and hence the ring $\bar{R}:=R/ghR$ has stable range $1.$ Additionally, $fR+gR=R$ implies $f\bar{R}+g\bar{R}=\bar{R}.$ Hence using the fact that $\bar{R}$ has stable range $1,$ we can find $x\in R$ such that $(f-xg)\bar{R}=\bar{R}.$ Thus $(f-xg)R+ghR=R$ follows. Hence we may also assume $g=h$ for the rest of the proof. Next, let $S$ be the set of rational primes dividing $n$ and for each $p\in S$ pick $r(p)\in\mathbb{N}_0$ maximal such that $p^{r(p)}$ divides $(q-1)!^{\lceil k\log_2(q)\rceil}.$ We assume that the primes of $K$ covering the prime at infinity $\C P_{\infty}$ of $\mathbb{F}_q(T)$ are $\mathfrak{P}_{\infty,1},\dots,\mathfrak{P}_{\infty,L}$ with relative degrees $f_j:=f(\mathfrak{P}_{\infty,j}\mid \C P_{\infty})$ and ramification indices $e_j:=e(\mathfrak{P}_{\infty,j}\mid \C P_{\infty})$. Further, set $t_p:={\rm exp}(\mathbb{Z}_{p^{r(p)+1}}^*)$ for each rational prime $p\in S.$ Then by Theorem~\ref{Dirichlet_positive_char}, we can find an element $f'\in R$ such that $f'\equiv f\text{ mod }gR, f'R$ is a maximal ideal and such that
\begin{equation*}
{\rm ord}_{\mathfrak{P}_{\infty,j}}(f'R)\equiv -e_j\text{ mod }{\rm lcm}\{t_p\mid p\text{ prime and }p\in S\}\mathbb{Z}
\end{equation*}
holds for all $j=1,\dots,L.$ To finish the proof, we only have to show that $\gcd(e(f'R),n)$ divides $(q-1)!^{\lceil k\log_2(q)\rceil}$. To this end, assume for contradiction that $\gcd(e(f'R),n)$ does not divide $(q-1)!^{\lceil k\log_2(q)\rceil}.$ Then there must be a prime $p\in S$ such that $p^{r(p)+1}$ divides $e(f'R).$ But note that as $f'R$ is a maximal ideal, we obtain that $R/f'R$ is a finite field and hence the multiplicative group $(R/f'R)^*$ is cyclic of order $q^{{\rm deg}(f'R)}-1.$ This implies by Lemma~\ref{degree_func} that
\begin{align*}
1\equiv q^{{\rm deg}(f'R)}=q^{-\sum_{j=1}^L f_j\cdot{\rm ord}_{\mathfrak{P}_{\infty,j}}(f'R)}=\prod_{j=1}^L (q^{-{\rm ord}_{\mathfrak{P}_{\infty,j}}(f'R)\cdot f_j})\equiv\prod_{j=1}^L q^{e_j\cdot f_j}=q^{\sum_{j=1}^L e_j\cdot f_j}=q^k\text{ mod }p^{r(p)+1}\mathbb{Z}. 
\end{align*}
The last equality follows from Theorem~\ref{sum_formula}. But now we distinguish two cases: First, assume that $p\geq q^k.$ But then $1\equiv q^k\text{ mod }p^{r(p)+1}\mathbb{Z}$ is clearly impossible, as $1<q^k<p^{r(p)+1}$ holds. Second, we assume $p<q^k.$ Then $r(p)\geq k\log_2(q)\geq k\log_p(q)=\log_p(q^k)$ holds by definition and hence $p^{r(p)+1}\geq p^{\log_p(q^k)+1}=q^k\cdot p>q^k.$ But again $1<q^k<p^{r(p)+1}$ implies that $1\equiv q^k\text{ mod }p^{r(p)+1}\mathbb{Z}$ is impossible. This finishes the proof.
\end{proof}

This proposition in hand, the proof of the property ${\rm Exp}(t,l)$ is now straightforward:

\begin{proposition}\label{exp_positive_char}
Let $K$ be a global function field that is a finite extension of $\mathbb{F}_q(T)$ and field of constants $\mathbb{F}_q.$ Also, set $k:=[K:\mathbb{F}_q(T)].$ Then $R$ satisfies ${\rm Exp}(2\cdot(q-1)!^{\lceil k\log_2(q)\rceil},2).$
\end{proposition}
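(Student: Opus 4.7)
Let $\pi \in R-\{0\}$ (renamed here to avoid collision with the field of constants $\mathbb F_q$) and $(b,a)\in W(\pi R)$ be given. Set $t_0:=(q-1)!^{\lceil k\log_2(q)\rceil}$ and $t:=2t_0$. The plan is to adapt Morris' construction for rings of algebraic integers \cite{MR2357719}, with Proposition~\ref{positive_char_exp_prep} replacing the classical Dirichlet-in-arithmetic-progressions input.

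First I would apply Proposition~\ref{positive_char_exp_prep} with $f=a$, $g=b$, $h=\pi$, $n=2t_0$ to produce $a'\in a+bR$ such that $a'R$ is a maximal ideal with $a'R+\pi R=R$, and such that the crucial divisibility $\gcd(e(a'R),2t_0)\mid t_0$ holds. Since $R/a'R$ is then a finite field and $R$ has stable range at most $3/2$, one can choose $c\in \pi R$ with $cR+a'R=R$ and set $d:=(1+bc)/a'\in R$, producing a matrix $M:=\bigl(\begin{smallmatrix} a' & b \\ c & d\end{smallmatrix}\bigr)\in C(A_1,R,\pi R)$ and verifying condition (b) of ${\rm Exp}(t,2)$.

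For each $i\in\{1,2\}$ the compatibility of the determinantal equation $a'd'_i-b'_ic=1$ with the fixed $c$, combined with $b'_i\equiv 0\pmod{\pi}$, forces $b'_i$ to lie in the coset $b+a'\pi R$; once $b'_i$ is chosen, $d'_i:=(1+b'_ic)/a'\in R$ verifies condition (c). My plan for conditions (d) and (e) is to exploit the Cayley--Hamilton identity: since $M_i:=\bigl(\begin{smallmatrix} a' & b'_i \\ c & d'_i\end{smallmatrix}\bigr)\in\operatorname{SL}_2(R)$ has trace $s_i:=a'+d'_i$ and determinant $1$, we have $M_i^n=f_n^{(i)}I_2+g_n^{(i)}M_i$ for uniquely determined $f_n^{(i)},g_n^{(i)}\in R$ (Chebyshev-type polynomials in $s_i$) satisfying $(f_n^{(i)})^2+f_n^{(i)}g_n^{(i)}s_i+(g_n^{(i)})^2=\det(M_i^n)=1$. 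Setting $(f_i,g_i):=(f_{n_i}^{(i)},g_{n_i}^{(i)})$ gives $f_iI_2+g_iM_i=M_i^{n_i}$, which lies in $C(A_1,R,\pi R)$ since $C$ is a subgroup; this handles (d). Because $M_i$ is upper triangular modulo $c$ and lower triangular modulo $b'_i$, with diagonal entries $a',d'_i$ in either case, we have $f_i+g_ia'=(M_i^{n_i})_{11}\equiv (a')^{n_i}$ modulo both $c$ and $b'_i$. Choosing $n_1+n_2=t_0$ therefore reduces condition (e) to the identity $(a')^{2(n_1+n_2)}\equiv (a')^{2t_0}\pmod{cR}$, which is automatic.

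The main obstacle is then condition (f), which demands $(a')^{n_i}\equiv u_i\pmod{b'_iR}$ for $u_i\in R^{*}=\mathbb F_q^{*}$, i.e.\ that the image of $(a')^{n_i}$ in the residue ring $R/b'_iR$ lie in the prime subfield $\mathbb F_q^{*}$. The plan is to apply Theorem~\ref{Dirichlet_positive_char} a second time to the coset $b+a'\pi R$ in order to produce prime elements $b'_i$ whose residue field is an extension of $\mathbb F_q$ in which $(a')^{n_i}$ lies in $\mathbb F_q^{*}$; equivalently, one needs $b'_i\mid (a')^{(q-1)n_i}-1$. The constant $(q-1)!^{\lceil k\log_2(q)\rceil}$ is engineered precisely so that the combinatorics of Lemma~\ref{degree_func} and Theorem~\ref{sum_formula}, together with the divisibility $\gcd(e(a'R),2t_0)\mid t_0$ from Step~1, guarantee the existence of exponents $n_1,n_2\geq 0$ with $n_1+n_2=t_0$ and of corresponding primes $b'_1R,b'_2R$ in the coset $b+a'\pi R$ that meet these constraints simultaneously; this is the positive-characteristic analogue of the corresponding step in Morris' original argument, and the necessity of $l=2$ rather than $l=1$ mirrors his distribution of the full exponent $t_0$ across two such primes.
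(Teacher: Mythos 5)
Your handling of conditions (a)--(e) does track the template the paper follows: the paper's entire proof of Proposition~\ref{exp_positive_char} is the remark that Morris's proof of \cite[Theorem~3.9]{MR2357719} goes through verbatim once Proposition~\ref{positive_char_exp_prep} is substituted for \cite[Lemma~3.8(2)]{MR2357719}, and your construction of $a'$ from that proposition, of $c$ and $d$, of the pairs $(f_i,g_i)$ via Cayley--Hamilton from powers of $M_i$, and the reduction of condition (e) to $n_1+n_2=t_0$ via triangularity of $M_i$ modulo $c$ and modulo $b_i'$ are all consistent with that argument. (One small slip there: for $d=(1+bc)/a'\in R$ you need $c\equiv -b^{-1}\bmod a'R$, not merely $cR+a'R=R$.)

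The genuine gap is your treatment of condition (f), which is exactly where the content of the proof lives. First, condition (c) of Definition~\ref{first_order_def} forces $b_i'\equiv 0\bmod \pi R$, so $b_i'$ ranges over $(b+a'R)\cap\pi R$: every element of this coset of $a'\pi R$ is divisible by each prime divisor of $\pi$, so $b_i'$ can never generate a maximal ideal unless $\pi$ is a unit, and Theorem~\ref{Dirichlet_positive_char} cannot even be invoked on the progression $b+a'\pi R$, since its coprimality hypothesis fails ($\pi$ divides both $b$ and $a'\pi$). Second, even in a legitimate progression, a Dirichlet-type theorem produces \emph{some} prime in the coset; it cannot additionally force that prime to divide the fixed nonzero element $(a')^{(q-1)n_i}-1$, whose prime divisors form a finite set determined in advance with no reason to meet the coset --- this is an overdetermined demand, not an application of Theorem~\ref{Dirichlet_positive_char}. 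Third, the relaxation ``$(a')^{n_i}$ lands in $\mathbb{F}_q^*$'' buys nothing here: condition (d) forces $f_i+g_ia'\equiv 1\bmod \pi R$, and since $R^*=\mathbb{F}_q^*$ consists of constants, $u_i\equiv 1 \bmod \pi R$ with $\pi\notin R^*$ forces $u_i=1$; so what you actually need is that the multiplicative order of $a'$ modulo $b_i'$ divides $n_i$, subject to $n_1+n_2=t_0$, and your proposal supplies no mechanism to control these orders. Your closing sentence delegates precisely this step to constants being ``engineered precisely,'' which is where Morris's argument does real work: the gcd-control statement (here Proposition~\ref{positive_char_exp_prep}) is stated for an \emph{arbitrary} modulus $n$ because it must be applied adaptively, with $n$ computed from orders arising after earlier choices are fixed; your reconstruction invokes it once, with the fixed value $n=2t_0$, and never uses that quantifier --- a sign the argument has drifted from the proof it is meant to mirror. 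As written, the proposal establishes (a)--(e) conditionally on data whose existence (the $n_i$, $b_i'$, $u_i$ of condition (f)) is asserted but not produced.
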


\begin{proof}
The proof works the same way as the proof of \cite[Theorem~3.9]{MR2357719}. The only difference is that rather than using \cite[Lemma~3.8(2)]{MR2357719}, we use Proposition~\ref{positive_char_exp_prep}.
\end{proof}

\begin{remark}
The bound $2\cdot(q-1)!^{\lceil k\log_2(q)\rceil}$ above can be lowered in specific cases. For example, one can replace $k$ with the greatest common divisor of the degrees $f_j$ of the primes covering the prime at infinity of $\mathbb{F}_q(T)$.
\end{remark}

\subsection{Deducing bounded generation in rank $2$}

\subsubsection{The root system $\Phi=A_2$}

We need two technical statements before we can prove the main theorem of this subsection. First note that for a given commutative ring $R$, the group ${\rm SL}_2(R,I):=C(A_1,R,I)$ is considered to be a subgroup of ${\rm SL}_3(R,I):=C(A_2,R,I)$ by way of the monomorphism
\begin{equation*}
\phi_{\beta}:{\rm SL}_2(R)\to{\rm SL}_3(R),A\mapsto
\begin{pmatrix}
A & \ \\
\ & 1
\end{pmatrix}
\end{equation*}
 
Next, the first technical statement needed is the following:

\begin{proposition}\cite[Theorem~2.14]{MR2357719}\label{normality_a2}
Let $R$ be a commutative ring with $1$ and $I$ a non-zero ideal of $R.$ Further, assume that $R$ has stable range at most $2$ and that $n\geq 3$ is given. Then 
\begin{enumerate}
\item{The group $E(A_2,R)$ is normal in $G(A_2,R).$}
\item{The equation ${\rm SL}_3(R,I)={\rm SL}_2(R,I)\cdot E(A_2,R,I)$ holds.}
\end{enumerate}
\end{proposition}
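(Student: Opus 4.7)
The plan is to prove the two parts separately, using stable range at most $2$ and the commutator relations from Lemma~\ref{commutator_relations} as the main tools. Throughout I identify ${\rm SL}_3(R)$ with $G(A_2,R)$ and write $\varepsilon_{ij}(t)$ for the usual elementary matrix; these are the root elements of $G(A_2,R)$.

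For part $(1)$, I would show that for every $g\in G(A_2,R)$ and every root $\phi\in A_2$ the conjugate $g\varepsilon_\phi(x)g^{-1}$ lies in $E(A_2,R)$. The key observation is that in a root system of type $A_2$, any root $\phi$ can be written as $\alpha+\beta$ for two simple roots $\alpha,\beta$, so by Lemma~\ref{commutator_relations}(2) every root element factors as a commutator
\begin{equation*}
\varepsilon_{\phi}(x)=(\varepsilon_{\beta}(\pm x),\varepsilon_{\alpha}(1)).
\end{equation*}
Hence $g\varepsilon_{\phi}(x)g^{-1}=(g\varepsilon_{\beta}(\pm x)g^{-1},g\varepsilon_{\alpha}(1)g^{-1})$, and it suffices to show that $g\varepsilon_{\alpha}(1)g^{-1}\cdot E(A_2,R)=\varepsilon_{\alpha}(1)\cdot E(A_2,R)$, i.e.\ that such conjugates are elementary up to a common factor, for any simple $\alpha$. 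Using the standard Bass--Vaserstein technique one reduces $g$ via right multiplication by elementary matrices to a matrix in a small explicit family (this is where stable range at most $2$ enters, to guarantee the reduction closes up after finitely many steps), and then explicit commutator computations from Lemma~\ref{commutator_relations} finish the case.

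For part $(2)$, given $A=(a_{pq})\in {\rm SL}_3(R,I)$, write its last row as $v=(a_{31},a_{32},a_{33})$ with $a_{31},a_{32}\in I$ and $a_{33}\in 1+I$; this triple is unimodular because $A$ is invertible. My strategy is to multiply $A$ on the right by a product $E\in E(A_2,R,I)$ so that $AE$ has last row $(0,0,1)$. Once this is achieved, the determinant being $1$ together with the congruence $AE\equiv I_3\bmod I$ forces the third column of $AE$ to be $(0,0,1)^{\top}$ as well, whence $AE=\phi_{\beta}(B)$ for some $B\in {\rm SL}_2(R,I)$ and the decomposition $A=\phi_{\beta}(B)\cdot E^{-1}\in {\rm SL}_2(R,I)\cdot E(A_2,R,I)$ follows. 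To achieve the reduction, I apply stable range at most $2$ to the unimodular triple $(a_{33},a_{31},a_{32})$ and get $x_1,x_2$ with $(a_{31}+x_1 a_{33})R+(a_{32}+x_2 a_{33})R=R$; multiplying by $\varepsilon_{31}(x_1)\varepsilon_{32}(x_2)$ replaces the last row with a triple whose first two entries are unimodular. A further ${\rm SL}_2$-style manipulation inside the first two columns (using the standard identity $(b,a)\in W(I)$ together with $aR+bR=R$ allowing the reduction of $(b,a,\ast)$ to $(0,0,1)$ via elementary operations) concludes the reduction.

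The main obstacle is ensuring that the reducing matrix $E$ truly lies in $E(A_2,R,I)$ and not just in $E(A_2,R)$. A priori the elements $x_1,x_2$ produced by stable range at most $2$ are arbitrary elements of $R$, so $\varepsilon_{31}(x_1)\varepsilon_{32}(x_2)$ may fall outside the congruence subgroup. Circumventing this requires a careful bootstrap: first apply stable range $3/2$ (which the hypothesis $\mathrm{sr}(R)\le 2$ automatically yields) inside the quotient ring $R/a_{33}R$ to arrange $x_1,x_2\in I$, exploiting the fact that $a_{33}$ is a unit modulo $I$ so the ambient congruence class is preserved. The second reduction step, passing from an $I$-unimodular pair $(a_{31}',a_{32}')$ to $(0,0)$ inside the congruence subgroup, mirrors the classical Whitehead lemma for ${\rm SL}_2$ and goes through verbatim once one has stable range at most $2$ at one's disposal.
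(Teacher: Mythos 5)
The paper gives no proof of this proposition at all: it is imported verbatim from Morris \cite[Theorem~2.14]{MR2357719}, so your proposal has to be measured against the standard argument there, whose skeleton (reduce the last row by elementary operations via stable range, then peel off an embedded ${\rm SL}_2$ block) you have correctly identified. However, your part (1) is circular as written: from $g\varepsilon_{\beta}(\pm x)g^{-1}\in\varepsilon_{\beta}(\pm x)E(A_2,R)$ and $g\varepsilon_{\alpha}(1)g^{-1}\in\varepsilon_{\alpha}(1)E(A_2,R)$ you cannot conclude anything about the commutator of the conjugates, because cosets of $E(A_2,R)$ only multiply coherently once $E(A_2,R)$ is known to be normal --- which is exactly what you are trying to prove. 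The working route is to establish (2) with $I=R$ first, write $g=\phi_{\beta}(B)e$ with $B\in{\rm SL}_2(R)$ and $e\in E(A_2,R)$, and then handle conjugation by $\phi_{\beta}(B)$ directly: the root subgroups in the third row and column transform linearly under $B$, while $\phi_{\beta}(B)\varepsilon_{12}(x)\phi_{\beta}(B)^{-1}=I_3+xvw^{\top}$ with $v$ the first column of $B$, $w^{\top}$ the second row of $B^{-1}$ (both supported in the first two coordinates) and $w^{\top}v=0$; such a transvection is elementary via the identity $I_3+xvw^{\top}=\bigl(\varepsilon_{13}(v_1)\varepsilon_{23}(v_2),\,\varepsilon_{31}(xw_1)\varepsilon_{32}(xw_2)\bigr)$. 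That factorization is the actual content of part (1) and is absent from your sketch; Lemma~\ref{commutator_relations} only governs commutators of root elements and says nothing about conjugation by a general $B\in{\rm SL}_2(R)$.

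In part (2) you correctly locate the crux --- the stable range multipliers $x_1,x_2$ must be forced into $I$ so that the reducing matrix stays in $E(A_2,R,I)$ --- but your patch fails on both counts. First, the implication between stable range conditions runs the other way: by Definition~\ref{first_order_def}, stable range at most $3/2$ is the \emph{stronger} hypothesis and implies stable range at most $2$, so ``$\mathrm{sr}(R)\le 2$ automatically yields $3/2$'' is backwards, and the proposition assumes only $\mathrm{sr}(R)\le 2$. Second, even granting $3/2$, applying a stable range condition in $R/a_{33}R$ produces a multiplier $t\in R$ with no control on $t$ modulo $I$; the congruence $a_{33}\equiv 1\bmod I$ does not rescue this. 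What the argument actually rests on is Vaserstein's relativization of stable range: if $\mathrm{sr}(R)\le 2$, then for \emph{every} ideal $I$ and every unimodular row $(a_{31},a_{32},a_{33})\equiv(0,0,1)\bmod I$ there exist $x_1,x_2\in I$ with $(a_{31}+x_1a_{33})R+(a_{32}+x_2a_{33})R=R$; this relative lemma is what Morris's proof invokes, and it is a theorem, not a formal consequence you can wave through. (A smaller slip: once the last row is $(0,0,1)$, the determinant and the congruence $AE\equiv I_3\bmod I$ do \emph{not} force the third column to be $(0,0,1)^{\top}$; it is $(c_1,c_2,1)^{\top}$ with $c_1,c_2\in I$, repaired by one further multiplication by $\varepsilon_{13}(-c_1)\varepsilon_{23}(-c_2)\in E(A_2,R,I)$. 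Your final clearing step, choosing $u,v\in I$ with $a'_{31}u+a'_{32}v=1-a_{33}$, is fine, since $a'_{31}R+a'_{32}R=R$ gives $a'_{31}I+a'_{32}I=I\ni 1-a_{33}$.)
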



Now, we can state the main theorem of this subsection:

\begin{theorem}\label{A_2_bounded_gen}
Let $K$ be a global function field that is a finite extension of $\mathbb{F}_q(T)$ with ring of integers $R:=\C O_K$ and field of constants $\mathbb{F}_q.$ Also, set $k:=[K:\mathbb{F}_q(T)].$ Further, let $I$ be a non-zero ideal of $R.$
\begin{enumerate}
\item{Then the subgroup $E(A_2,R,I)$ can be boundedly generated by the set $E_I:=\{A\varepsilon_{\phi}(x)A^{-1}\mid A\in G(A_2,R),x\in I,\phi\in A_2\}.$ More precisely, there is a constant $S(q,k,A_2)\in\mathbb{N}$ depending only on $q$ and $k$ such that $\|E(A_2,R,I)\|_{\rm E_I}\leq S(q,k,A_2).$}
\item{Then $E(A_2,R)$ is boundedly generated by root elements. More precisely, there is a constant $L(q,k,A_2)\in\mathbb{N}$ depending only on $q$ and $k$ such that with $|E(A_2,R)|_{\rm EL}\leq L(q,k,A_2).$}
\end{enumerate}
\end{theorem}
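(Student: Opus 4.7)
The plan is to deduce both parts from the compactness result Theorem~\ref{compactness_bounded}, applied to a first-order theory $\mathcal{T}_{q,k}$ in the language of commutative rings with $1$, augmented by a unary predicate $\mathcal{P}$ for an ideal and nine unary predicates $\mathcal{Q}_{i,j}$ cutting out the matrix entries of a generating set. The theory will be arranged so that every pair $(R, I)$ with $R = \mathcal{O}_K$ the ring of integers of a global function field $K$ with $[K:\mathbb{F}_q(T)] = k$ and $I$ a nonzero ideal of $R$ is a model; the uniformity of the conclusion of Theorem~\ref{compactness_bounded} then delivers the constants $S(q,k,A_2)$ and $L(q,k,A_2)$.

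I would assemble $\mathcal{T}_{q,k}$ with the following axioms, all first-order expressible: $\mathcal{P}$ defines a nonzero ideal; Bass stable range at most $3/2$; $\mathrm{Gen}(t,1)$ and $\mathrm{Exp}(t,2)$ with $t := 2(q-1)!^{\lceil k \log_2 q \rceil}$ (see Definition~\ref{first_order_def}); and finally a sentence asserting that $C(A_2, R, I)/E(A_2, R, I)$ has at most $t$ cosets, expressible by quantifying over $t$ candidate coset representatives. That the target rings satisfy these axioms follows from classical stable range results for Dedekind domains, from Lemma~\ref{gen_pos_char} for $\mathrm{Gen}(t,1)$, and from Proposition~\ref{exp_positive_char} for $\mathrm{Exp}(t,2)$.

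The substantive new input is the coset bound. Theorem~\ref{finiteness_mennicken} gives $|M(I)| \le t$. To transfer this to $C(A_2, R, I)/E(A_2, R, I)$, Proposition~\ref{normality_a2}(2) provides the factorization $C(A_2, R, I) = \mathrm{SL}_2(R, I)\cdot E(A_2, R, I)$, whence
\[
C(A_2, R, I)\big/E(A_2, R, I) \;\cong\; \mathrm{SL}_2(R, I)\big/\bigl(\mathrm{SL}_2(R, I)\cap E(A_2, R, I)\bigr).
\]
Mimicking the classical Bass--Milnor--Serre argument, the assignment $\bigl(\begin{smallmatrix} a & b \\ c & d\end{smallmatrix}\bigr) \mapsto \bracenom{c}{d}_I$ descends to an isomorphism of this quotient with $M(I)$, with stable range $3/2$ used to verify well-definedness and the Mennicken relations $\mathrm{MS}(1)$--$\mathrm{MS}(2)$ used to check that it is a homomorphism. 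This furnishes the required uniform coset bound.

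With the theory in hand, take $\mathcal{Q}_{i,j}$ to describe the matrix entries of $E_I = \{A\varepsilon_\phi(x)A^{-1} : A \in G(A_2, R),\ x \in I,\ \phi \in A_2\}$; this is first-order via existential quantifiers over $A$ and $x$ and a finite disjunction over $\phi$. Theorem~\ref{compactness_bounded} then produces a uniform bound depending only on $\mathcal{T}_{q,k}$, proving~(1). For~(2), specialize to $I := R$ and redefine the $\mathcal{Q}_{i,j}$ to carve out root elements directly; since $G(A_2, R) = E(A_2, R)$ for Dedekind $R$, the same machinery yields the bound $L(q,k,A_2)$. The main obstacle will be the Mennicken-isomorphism step: the classical number-theoretic proof must be transcribed carefully to positive characteristic and phrased so that the resulting coset bound is expressed uniformly across all models of $\mathcal{T}_{q,k}$.
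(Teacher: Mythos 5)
Your overall strategy coincides with the paper's: verify stable range at most $3/2$, ${\rm Gen}(t,1)$ and ${\rm Exp}(t,2)$ with $t=2\cdot(q-1)!^{\lceil k\log_2(q)\rceil}$ (via Lemma~\ref{gen_pos_char} and Proposition~\ref{exp_positive_char}), bound the quotient $C(A_2,R,I)/E(A_2,R,I)$ through the Mennicken machinery, and invoke Theorem~\ref{compactness_bounded}. However, one step of your plan fails as stated: the ``axiom'' asserting that $C(A_2,R,I)/E(A_2,R,I)$ has at most $t$ cosets, written by quantifying over $t$ candidate coset representatives, is not first-order expressible, because membership in $E(A_2,R,I)$ is not a first-order formula --- the subgroup is generated by an infinite set, so membership is an infinite disjunction over word lengths (indeed, if it were expressible, bounded generation would be essentially built in and the compactness theorem would be moot). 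The repair is simply to delete this axiom: hypothesis (3) of Theorem~\ref{compactness_bounded} only requires that the quotient be finite \emph{in every model} of the theory, as a property of models rather than a sentence of the theory, and your own Mennicken argument (like the paper's) derives exactly this finiteness from the remaining axioms, so the offending axiom is redundant.

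Your Mennicken step is also oriented the wrong way and commits you to more than is needed. You propose to descend $\bigl(\begin{smallmatrix} a & b \\ c & d\end{smallmatrix}\bigr)\mapsto\bracenom{c}{d}_I$ to an \emph{isomorphism} of ${\rm SL}_2(R,I)/({\rm SL}_2(R,I)\cap E(A_2,R,I))$ with $M(I)$; but to bound $|C(A_2,R,I)/E(A_2,R,I)|$ by $|M(I)|\leq t$ via a map \emph{into} $M(I)$ you need its injectivity, and that injectivity is precisely the deep Bass--Milnor--Serre content, which you would have to re-prove uniformly across all models. The paper's route avoids this entirely: Proposition~\ref{mennicken_a2} states that $(b,a)\mapsto \phi_{\beta}\bigl(\begin{smallmatrix} a & b \\ * & *\end{smallmatrix}\bigr)\cdot E(A_2,R,I)$ is a Mennicken symbol $W(I)\to C(A_2,R,I)/E(A_2,R,I)$, the universal property (Proposition~\ref{universal_mennicken}) then yields a homomorphism $\theta_I:M(I)\to C(A_2,R,I)/E(A_2,R,I)$, and Proposition~\ref{normality_a2}(2) --- the decomposition you quote --- makes $\theta_I$ surjective, so the cardinality bound follows with no injectivity required. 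Finally, a minor inaccuracy: $G(A_2,R)=E(A_2,R)$ ``for Dedekind $R$'' is false in general (a Dedekind domain can have nontrivial ${\rm SK}_1$) and is not needed for part (2); taking $I=R$, one only uses that $E(A_2,R)$ is normal (Proposition~\ref{normality_a2}(1)), so that $E(A_2,R,R)=E(A_2,R)$ and $C(A_2,R,R)=G(A_2,R)$, with finiteness of $G(A_2,R)/E(A_2,R)$ supplied by the same Mennicken argument.
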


The second technical statement, we need is the following:

\begin{proposition}\cite[Theorem~2.18]{MR2357719}\label{mennicken_a2}
Let $R$ be a commutative ring with $1$ and $I$ a non-zero ideal of $R.$ Consider the map
\begin{align*}
&\{\cdot\}_I:W(I)\to C(A_2,R,I)/E(A_2,R,I),(b,a)\mapsto 
\phi_{\beta}
\begin{pmatrix}
a & b\\
* & *
\end{pmatrix}
\cdot E(A_2,R,I).
\end{align*}
Then this map is well-defined and is a Mennicken symbol.
\end{proposition}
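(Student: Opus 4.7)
The statement has three things to check: well-definedness of $\{\cdot\}_I$, invariance under $I$-equivalence (MS(1)), and multiplicativity (MS(2)). Throughout, I write $M = \begin{pmatrix} a & b \\ c & d \end{pmatrix}$ for a completion of $(b,a) \in W(I)$ to a matrix in ${\rm SL}_2(R,I) := C(A_1, R, I)$.

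\emph{Well-definedness.} Given $(b, a) \in W(I)$, since $aR + bR = R$, Bezout yields $c_0, d_0 \in R$ with $ad_0 - bc_0 = 1$, and reduction mod $I$ forces $d_0 \equiv 1 \bmod I$. Replacing $(c_0, d_0)$ by $(c_0 - c_0 a, d_0 - c_0 b)$ -- which is right multiplication by the elementary matrix $\begin{pmatrix} 1 & 0 \\ -c_0 & 1 \end{pmatrix}$ and preserves $\det = 1$ -- produces a completion $M \in {\rm SL}_2(R,I)$ with $c \equiv 0$ and $d \equiv 1 \bmod I$. Two such completions of the same $(b,a)$ share their top row and hence differ on the right by a matrix $\begin{pmatrix} 1 & 0 \\ s & 1 \end{pmatrix}$; requiring both to lie in ${\rm SL}_2(R,I)$ forces $s \in I$. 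Its image under $\phi_\beta$ is $\varepsilon_{-\beta}(s) \in E(A_2, R, I)$, so the coset of the $\phi_\beta$-image is independent of the completion chosen.

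\emph{MS(1).} Both cases of $I$-equivalence fit the same template. If $(d,c) = (b, a + xb)$ with $x \in R$, then completions $M_1$ of $(b,a)$ and $M_2$ of $(b, a + xb)$ in ${\rm SL}_2(R, I)$ satisfy $M_2 = M_1 \cdot \begin{pmatrix} 1 & 0 \\ x & 1 \end{pmatrix} \cdot \begin{pmatrix} 1 & 0 \\ t & 1 \end{pmatrix} = M_1 \cdot \begin{pmatrix} 1 & 0 \\ x+t & 1 \end{pmatrix}$ for some $t \in R$, and membership of both in ${\rm SL}_2(R,I)$ forces $x + t \in I$. Hence $\phi_\beta(M_2) = \phi_\beta(M_1) \cdot \varepsilon_{-\beta}(x+t)$ with $\varepsilon_{-\beta}(x+t) \in E(A_2, R, I)$. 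If $(d,c) = (b + ya, a)$ with $y \in I$, the analogous argument uses the column operation $\begin{pmatrix} 1 & y \\ 0 & 1 \end{pmatrix}$, whose $\phi_\beta$-image $\varepsilon_\beta(y)$ lies in $E(A_2, R, I)$ directly since $y \in I$.

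\emph{MS(2).} This is the main step and is where the extra roots of $A_2$ beyond those of $A_1$ become essential. Let $M_i = \begin{pmatrix} a & b_i \\ c_i & d_i \end{pmatrix}$ be completions of $(b_i, a)$ in ${\rm SL}_2(R, I)$. The goal is to produce a completion $M_{12}$ of $(b_1 b_2, a)$ with $\phi_\beta(M_1 M_2) \equiv \phi_\beta(M_{12}) \bmod E(A_2, R, I)$. Consider the auxiliary matrix
\begin{equation*}
N := \phi_\beta(M_1) \cdot \phi_\alpha(M_2) = \begin{pmatrix} a & a b_1 & b_1 b_2 \\ c_1 & a d_1 & b_2 d_1 \\ 0 & c_2 & d_2 \end{pmatrix} \in C(A_2, R, I),
\end{equation*}
where $\phi_\alpha$ denotes the analogous embedding of ${\rm SL}_2$ into the lower-right $2 \times 2$-block (associated to the other simple root $\alpha$ of $A_2$). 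The entries $N_{1,1} = a$ and $N_{1,3} = b_1 b_2$ visibly carry the pair $(b_1 b_2, a)$. A short sequence of elementary operations, each with argument in $I$ and thus in $E(A_2, R, I)$ -- right multiplication by $\varepsilon_\beta(-b_1)$ clears the $(1,2)$-entry using $a d_1 - b_1 c_1 = 1$, right by $\varepsilon_\alpha(-d_1 b_2)$ clears $(2,3)$, left by $\varepsilon_{-\alpha}(-c_2)$ clears $(3,2)$, and right by $\varepsilon_{-\beta}(-c_1)$ clears $(2,1)$ -- transforms $N$ into a matrix of the form $\phi_{\beta'}(M_{12})$, where $\phi_{\beta'}$ is the embedding of ${\rm SL}_2$ through the $A_1$-subsystem $\{\pm(\alpha + \beta)\}$ of $A_2$ (the "diagonal" $2 \times 2$-block in positions $\{1,3\}$) and $M_{12} \in {\rm SL}_2(R,I)$ completes $(b_1 b_2, a)$. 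The proof is completed by establishing two change-of-embedding identifications modulo $E(A_2, R, I)$, namely $\phi_\alpha(M_2) \equiv \phi_\beta(M_2)$ and $\phi_{\beta'}(M_{12}) \equiv \phi_\beta(M_{12})$; these follow from the normality of $E(A_2, R, I)$ in $G(A_2, R)$ (Proposition \ref{normality_a2}) together with commutator manipulations (Lemma \ref{commutator_relations}) that rotate root elements between the three $A_1$-subsystems of $A_2$. Combining everything gives $\phi_\beta(M_1)\phi_\beta(M_2) \equiv \phi_\beta(M_{12}) \bmod E(A_2, R, I)$, which is MS(2).

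The main obstacle lies in this last step: rigorously justifying the two change-of-embedding equivalences, as well as carefully tracking the chain of $E(A_2, R, I)$-operations. This is precisely the point where the passage from $A_1$, where the Mennicken symbol can be genuinely nontrivial, to $A_2$, where the extra root elements absorb this nontriviality into $E(A_2, R, I)$, becomes decisive.
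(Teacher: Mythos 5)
You should first know that the paper contains no proof of this proposition: it is imported verbatim from Morris \cite[Theorem~2.18]{MR2357719}, so a blind proof must supply the entire content, and yours has a genuine gap concentrated in exactly the place where Morris does the real work. The clearest failure is in MS(1) for the case $c=a+xb$ with $x\in R$ arbitrary. Writing $e_{21}(t)$ for $\left(\begin{smallmatrix}1&0\\ t&1\end{smallmatrix}\right)$, you assert $M_2=M_1e_{21}(x)e_{21}(t)=M_1e_{21}(x+t)$ with $x+t\in I$. This is false: right multiplication by $e_{21}(t)$ changes the top row from $(a+xb,\,b)$ to $(a+xb+tb,\,b)$, so such a factorization forces $tb=0$, hence over a domain $t=0$ and $x\in I$, contradicting $x$ arbitrary. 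Concretely, for $R=\mathbb{Z}$, $I=5\mathbb{Z}$, $M_1=\left(\begin{smallmatrix}6&5\\ 25&21\end{smallmatrix}\right)$, $x=1$, the completion $M_2=\left(\begin{smallmatrix}11&5\\ 35&16\end{smallmatrix}\right)\in{\rm SL}_2(R,I)$ of $(5,11)$ is not $M_1$ times any lower unipotent. Two completions of the same pair differ by a lower unipotent on the \emph{left} (your well-definedness step has the same harmless left/right slip), and the true relation here is the conjugation-type identity $M_2=e_{21}(-x)\,M_1\,e_{21}(x)$. Hence MS(1)(a) reduces to showing that the commutator $\bigl(\varepsilon_{-\beta}(x),\phi_{\beta}(M_1)\bigr)$ lies in $E(A_2,R,I)$ for arbitrary $x\in R$, which your argument does not address.

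The same missing principle reappears at the step you yourself flag in MS(2): the change-of-embedding equivalences $\phi_{\alpha}(M)\equiv\phi_{\beta}(M)$ and $\phi_{\beta'}(M_{12})\equiv\phi_{\beta}(M_{12})$ modulo $E(A_2,R,I)$. Normality of $E(A_2,R,I)$ only makes the quotient a group; it does not identify these cosets. Since $\phi_{\alpha}(M)=w\phi_{\beta}(M)w^{-1}$ for a Weyl-type element $w\in E(A_2,R)$, what is needed is $(w,\phi_{\beta}(M))\in E(A_2,R,I)$ for a \emph{general} $M\in{\rm SL}_2(R,I)$, i.e.\ the relative commutator formula $[E(A_2,R),C(A_2,R,I)]\subseteq E(A_2,R,I)$ (equivalently, centrality of $C(A_2,R,I)/E(A_2,R,I)$ in $G(A_2,R)/E(A_2,R,I)$). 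Lemma~\ref{commutator_relations} cannot deliver this: it computes commutators of two root elements, whereas $\phi_{\beta}(M)$ is a general congruence matrix --- if it were a controlled product of level-$I$ root elements there would be nothing to prove. This commutator formula is precisely the technical core that Morris establishes (under stable range hypotheses) in the lemmas leading up to his Theorem~2.18, and your proposal neither proves it nor cites a usable substitute. To be clear about what does work: the existence of completions, the $y\in I$ case of MS(1), and the block-clearing computation with $N=\phi_{\beta}(M_1)\phi_{\alpha}(M_2)$ --- the classical Bass--Milnor--Serre Mennicke computation, whose four elementary arguments $-b_1$, $-d_1b_2$, $-c_2$, $-c_1$ all lie in $I$ --- are correct; but without the commutator formula the proof does not close.
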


Now, we can prove Theorem~\ref{A_2_bounded_gen}:

\begin{proof}
According to Theorem~\ref{finiteness_mennicken}, for any commutative ring $S$ with $1$ and a non-zero ideal $I$ such that $S$ has stable range at most $3/2$ and satisfies ${\rm Exp}(2\cdot(q-1)!^{\lceil k\log_2(q)\rceil},2)$ and ${\rm Gen}(2\cdot(q-1)!^{\lceil k\log_2(q)\rceil},1)$, the universal Mennicken group $M(I)$ has less than $2\cdot(q-1)!^{\lceil k\log_2(q)\rceil}$ many elements. But according to Proposition~\ref{mennicken_a2}, the map 
\begin{align*}
&\{\cdot\}_I:W(I)\to C(A_2,S,I)/E(A_2,S,I),(b,a)\mapsto 
\phi_{\beta}
\begin{pmatrix}
a & b\\
* & *
\end{pmatrix}
\cdot E(A_2,S,I).
\end{align*}
is a Mennicken symbol. Thus by Proposition~\ref{universal_mennicken}, there is a group homomorphism 
\begin{align*}
\theta_I:M(I)\to C(A_2,S,I)/E(A_2,S,I),\bracenom{b}{a}_I\mapsto
\phi_{\beta}\begin{pmatrix}
a & b\\
* & *
\end{pmatrix}
\cdot E(A_2,S,I)
\end{align*}
But according to Proposition~\ref{normality_a2}(2), the map $\theta_I$ is surjective and hence $C(A_2,S,I)/E(A_2,S,I)$ is a quotient of $M(I)$. Thus $C(A_2,S,I)/E(A_2,S,I)$ has less than $2\cdot(q-1)!^{\lceil k\log_2(q)\rceil}$ many elements. But we have only used the first order properties ${\rm Exp}(2\cdot(q-1)!^{\lceil k\log_2(q)\rceil},2)$ and ${\rm Gen}(2\cdot(q-1)!^{\lceil k\log_2(q)\rceil},1)$ and stable range at most $3/2$ to deduce this; so we can use Theorem~\ref{compactness_bounded} to deduce that the set $E_I:=\{A\varepsilon_{\phi}(x)A^{-1}\mid A\in G(A_2,S),x\in I\}$ boundedly generates the group $E(A_2,S,I)$ with a bound $S(\C T_{q,k})$ on $\|E(A_2,S,I)\|_{E_I}$ only depending on a first order theory $\C T_{q,k}$ of commutative rings with $1$ containing the first order statements ${\rm Exp}(2\cdot(q-1)!^{\lceil k\log_2(q)\rceil},2), {\rm Gen}(2\cdot(q-1)!^{\lceil k\log_2(q)\rceil},1)$ and stable range at most $3/2.$ Phrased differently, the bound $S(q,k,A_2):=S(\C T_{q,k})$ only depends on $q$ and $k.$ But according to Proposition~\ref{exp_positive_char}, the ring $R=\C O_K$ satisfies ${\rm Exp}(2\cdot(q-1)!^{\lceil k\log_2(q)\rceil},2)$ and ${\rm Gen}(2\cdot(q-1)!^{\lceil k\log_2(q)\rceil},1)$ according to Lemma~\ref{gen_pos_char} and $R$ has stable range at most $3/2$. This finishes the proof of the first claim of the theorem.

For the second claim, note first that by Proposition~\ref{normality_a2}, the subgroup $E(A_2,R)$ is normal in $G(A_2,R).$ Hence in particular, $E(A_2,R,R)=E(A_2,R)$ and $C(A_2,R,R)=G(A_2,R)$ hold. Phrased differently, the last claim of the theorem follows in essentially the same manner as the first one by considering the set ${\rm EL}=\{\varepsilon_{\phi}(x)\mid x\in R,\phi\in A_2\}$ instead of the set $E_I.$ 
\end{proof}

\subsubsection{The root system $\Phi=C_2$}

The case of $\Phi=C_2$ is arguably more involved than the case of $\Phi=A_2$. We need to describe a specific subgroup of ${\rm Sp}_4(R)=G(C_2,R)$ first. As mentioned in Section~\ref{Chevalley}, we consider ${\rm Sp}_4(R)$ as the following subgroup of ${\rm GL}_4(R):$ 
\begin{equation*}
{\rm Sp}_4(R)=
\{A\in{\rm GL}_4(R)\mid A^T\cdot 
\left(\begin{array}{c|c}
0_2 & I_2\\
\midrule
-I_2 & 0_2
\end{array}
\right)\cdot A=
\left(\begin{array}{c|c}
0_2 & I_2\\
\midrule
-I_2 & 0_2
\end{array}
\right)\}
\end{equation*}
For a commutative ring $R$ and an ideal $I$ of $R,$ we leave it as an exercise to the reader to check that the following set $G_{\beta}(R,I)$ is a subgroup of $C(C_2,R,I):$

\begin{equation*}
G_{\beta}(R,I)=
\{ 
\left(\begin{array}{c|c}
\begin{matrix}
a & 0\\
0 & 1
\end{matrix}
& 
\begin{matrix}
b & 0\\
0 & 0
\end{matrix}\\
\midrule
\begin{matrix}
c & 0\\
0 & 0
\end{matrix} 
& 
\begin{matrix}
d & 0\\
0 & 1
\end{matrix}
\end{array}
\right)\mid 
\begin{pmatrix}
a & b\\
c & d
\end{pmatrix}\in C(A_1,R,I)={\rm SL}_2(R,I)\}
\end{equation*}

In this context, we also consider the map
\begin{equation*}
\phi_{\beta}:C(A_1,R,I)\to G_{\beta}(R,I),
\begin{pmatrix}
a & b\\
c & d
\end{pmatrix}\mapsto
\left(\begin{array}{c|c}
\begin{matrix}
a & 0\\
0 & 1
\end{matrix}
& 
\begin{matrix}
b & 0\\
0 & 0
\end{matrix}\\
\midrule
\begin{matrix}
c & 0\\
0 & 0
\end{matrix} 
& 
\begin{matrix}
d & 0\\
0 & 1
\end{matrix}
\end{array}\right)
\end{equation*}
Similar to the $A_2$-case, we also note the following: 

\begin{proposition}\cite[Proposition~13.2]{MR244257}\cite[Theorem~1]{MR843808}\label{sp_4_decomposition}
Let $R$ be a commutative ring of stable range at most $3/2$ and $I$ an ideal of $R.$ Then 
\begin{enumerate}
\item{$E(C_2,R)$ is a normal subgroup of $G(C_2,R)$ and}
\item{$C(C_2,R,I)=G_{\beta}(R,I)\cdot E(C_2,R,I)$ holds.}
\end{enumerate}
\end{proposition}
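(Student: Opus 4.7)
The plan is to handle the two parts in order, each time using the stable range $\leq 3/2$ hypothesis as the key structural input. Both statements are symplectic analogues of classical normality and decomposition theorems in the linear case (compare Proposition~\ref{normality_a2}), but with an extra layer of bookkeeping forced by the symplectic form: every column operation must be paired with a compatible row operation, so the elementary moves must come from the symplectic elementary subgroup $E(C_2, R)$ rather than from the larger supply of ordinary ${\rm GL}_4(R)$ elementary matrices.

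For part (1), the plan is to show that for every root $\phi \in C_2$, every $x \in R$, and every $g \in G(C_2, R)$, the conjugate $g\,\varepsilon_{\phi}(x)\,g^{-1}$ lies in $E(C_2, R)$. Using the commutator formulas of Lemma~\ref{commutator_relations}(3) together with the fact that every root is in the Weyl orbit of the highest long root, one first reduces to the single case $\phi = \beta$; the normal closure of any long root subgroup already recovers all other root subgroups via iterated commutators. For general $g$, the stable range $\leq 3/2$ hypothesis enables a Gauss-style reduction of $g$ into a product of symplectic elementary matrices and a diagonal torus element, and each such factor conjugates $\varepsilon_{\beta}(x)$ into a product of root elements via the commutator calculus.

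For part (2), given $A \in C(C_2, R, I)$ (so $A \equiv I_4 \text{ mod } I$), the plan is to right-multiply by elements of $E(C_2, R, I)$ in order to bring the second and fourth columns of $A$ to the standard basis vectors $e_2$ and $e_4$. The off-diagonal entries $a_{i,2}$ for $i \neq 2$ lie in $I$ while $a_{2,2}$ is a unit modulo $I$, so stable range $\leq 3/2$ furnishes ring elements allowing us to clear the off-pivot entries by elementary operations; the symplectic constraint then forces rows $2$ and $4$ to become $e_2^T$ and $e_4^T$, so that the resulting matrix has its non-trivial $2 \times 2$ block sitting in the $(1,3)$-positions and lies in the image of $\phi_{\beta}$ applied to some element of ${\rm SL}_2(R, I)$.

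The main obstacle lies in this symplectic bookkeeping for part (2): ordinary Bass column reductions only realize the decomposition inside ${\rm GL}_4(R)$, and the quadratic correction terms $a^2 b$ appearing in the short-long commutator formulas of Lemma~\ref{commutator_relations}(3) mean that each elementary step potentially introduces extra root factors that must themselves be absorbed into $E(C_2, R, I)$. The delicate point is to sequence the reductions so that all intermediate matrices remain in $C(C_2, R, I)$ and the correction terms land in $E(C_2, R, I)$; this is precisely the argument carried out in \cite{MR843808}.
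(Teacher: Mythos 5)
You should first be aware that the paper gives no proof of this proposition at all: it is quoted verbatim from the literature, with part (2) being \cite[Proposition~13.2]{MR244257} and part (1) being \cite[Theorem~1]{MR843808} (Vaserstein's normality theorem). Your sketch of part (2) is consistent with the actual Bass--Milnor--Serre argument: a column reduction by elementary symplectic transvections from $E(C_2,R,I)$, using that the pivot is a unit modulo $I$ and that quotients $R/aR$ have stable range $1$, with the symplectic form then forcing the paired rows and leaving a $\phi_{\beta}$-block; you correctly flag the congruence-level bookkeeping as the delicate point, even if you defer it to the references.

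Your part (1), however, contains a genuine error. You propose a ``Gauss-style reduction of $g$ into a product of symplectic elementary matrices and a diagonal torus element,'' but no such reduction exists for rings of stable range at most $3/2$. Every diagonal symplectic torus element is a product of the semisimple elements $h_{\phi}(u):=w_{\phi}(u)w_{\phi}(1)^{-1}$, where $w_{\phi}(u):=\varepsilon_{\phi}(u)\varepsilon_{-\phi}(-u^{-1})\varepsilon_{\phi}(u)$, and these are by construction products of root elements, hence lie in $E(C_2,R)$. So your claimed reduction would yield ${\rm Sp}_4(R)=E(C_2,R)$ for every ring of stable range at most $3/2$ --- which is false (take a Dedekind domain $R$ with ${\rm SK}_1(R)\neq 1$ and an $M\in{\rm SL}_2(R)$ with nontrivial Mennicke symbol; then $\phi_{\beta}(M)\in{\rm Sp}_4(R)$ is stably nontrivial in ${\rm SK}_1(R)$, hence not even in $E_4(R)\supset E(C_2,R)$), and it would also render vacuous the entire Mennicke-symbol apparatus of Section~\ref{proof_main}, whose point is precisely that $C(C_2,R,I)/E(C_2,R,I)$ need not be trivial and must be controlled by first-order hypotheses. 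The correct proof of normality is of a completely different nature and needs no stable range hypothesis at all: Vaserstein (the cited Theorem~1) and Taddei prove $E(\Phi,R)\trianglelefteq G(\Phi,R)$ for every irreducible $\Phi$ of rank at least $2$ over an \emph{arbitrary} commutative ring, by localization-and-patching --- the elementary-times-``small'' decomposition holds over local rings (stable range $1$), and conjugation formulas are then glued across localizations; it cannot be obtained by row/column reduction of a single global matrix $g$.
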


The main technical step of this section is to derive the following theorem:

\begin{theorem}\label{Finiteness_congruence_quotient_symplectic}
Let $R$ be a commutative ring with $1$ and $I$ a non-zero ideal of $R$. Further, assume that $R$ is of stable range at most $3/2$ and satisfies ${\rm Gen}(2,1),{\rm Gen}(t,1)$ and ${\rm Exp}(t,l)$ for positive integers $t$ and $l.$ Then $C(C_2,R,I)/E(C_2,R,I)$ is finite and its order is bounded above by $2t.$ 
\end{theorem}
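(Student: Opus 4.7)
The plan is to mirror the proof of Theorem~\ref{A_2_bounded_gen} but in the symplectic setting: construct a Mennicken-type symbol whose image generates the quotient $Q := C(C_2,R,I)/E(C_2,R,I)$, bound the cardinality of a suitable universal target for that symbol via Theorem~\ref{finiteness_mennicken}, and separately account for the extra 2-torsion caused by the $C_2$ commutator relations.

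By Proposition~\ref{sp_4_decomposition}(2), every class in $Q$ is represented by some $\phi_{\beta}(A)$ with $A \in \mathrm{SL}_2(R,I)$, so the natural candidate for a Mennicken symbol is
\begin{equation*}
f: W(I) \to Q, \qquad f(b, a) = \phi_{\beta}\!\begin{pmatrix} a & b \\ c & d \end{pmatrix} \cdot E(C_2, R, I)
\end{equation*}
for any completion $(c,d)$ of $(b,a)$ to a matrix in $\mathrm{SL}_2(R,I)$. Using the normality of $E(C_2,R,I)$ in $G(C_2,R)$ (Proposition~\ref{sp_4_decomposition}(1)) together with the $C_2$ commutator relations from Lemma~\ref{commutator_relations}(3), I would first verify that $Q$ is abelian, that $f$ does not depend on the choice of $(c,d)$, and that $f$ satisfies the Mennicken axiom MS(1). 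The delicate point is MS(2): in the $A_2$ situation of Proposition~\ref{mennicken_a2} multiplicativity in $b$ holds directly, but the quadratic term $\varepsilon_{2\alpha+\beta}(\pm a^2 b)$ appearing in Lemma~\ref{commutator_relations}(3) obstructs the corresponding identity in $\mathrm{Sp}_4$. Following Bass--Milnor--Serre's treatment of the symplectic Mennicke symbol \cite[Chapter~13]{MR244257}, I expect this obstruction to be killed after squaring, so that $(b,a) \mapsto f(b,a)^2$ is a genuine Mennicken symbol into $Q$. By Proposition~\ref{universal_mennicken}, this squared symbol factors through $M(I)$, and Theorem~\ref{finiteness_mennicken} applied with $\mathrm{Gen}(t,1)$, $\mathrm{Exp}(t,l)$ and stable range $\le 3/2$ then yields $|M(I)| \le t$, so the subgroup $Q^2 \subseteq Q$ generated by squares has at most $t$ elements.

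The role of the remaining hypothesis $\mathrm{Gen}(2, 1)$ is to control the abelian quotient $Q/Q^2$ through a parallel but more direct argument (rather than another full application of Theorem~\ref{finiteness_mennicken}, which would require $\mathrm{Exp}(2,\cdot)$ that we do not have): one shows $|Q/Q^2| \le 2$ by exhibiting a $\mathbb{Z}/2\mathbb{Z}$-valued reduction of $f$ whose universal target is a quotient of a Mennicken group controlled by $\mathrm{Gen}(2,1)$. Combined with the bound $|Q^2| \le t$, this gives $|Q| \le 2t$, as required.

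The main obstacle will be the matrix identities inside $\mathrm{Sp}_4(R)$ modulo $E(C_2, R, I)$ needed both to isolate the 2-torsion obstruction to MS(2) and to verify that the squared symbol is a Mennicken symbol; these computations rest on the quadratic commutator terms from Lemma~\ref{commutator_relations}(3) and are substantially more intricate than in the $A_2$ case, which is precisely why the two first-order hypotheses $\mathrm{Gen}(t,1)$ and $\mathrm{Gen}(2,1)$ enter the argument essentially and independently rather than one reducing to the other.
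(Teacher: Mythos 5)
Your overall architecture matches the paper's proof — a squared Mennicken symbol whose universal target is bounded by $t$ via Theorem~\ref{finiteness_mennicken}, plus a separate ${\rm Gen}(2,1)$-based index-$2$ bound, multiplied to give $2t$ — but the squaring happens in the wrong place, and this is not a cosmetic difference. The paper, following \cite[Chapter~13]{MR244257}, squares the \emph{first argument}: it defines $[b,a]_I:=\{b^2,a\}_I$ and obtains ${\rm MS}(2)$ for $[\cdot]_I$ from the mixed relation $\{b_1^2b_2,a\}_I=\{b_1^2,a\}_I\cdot\{b_2,a\}_I$ of \cite[Lemma~13.3]{MR244257} (Proposition~\ref{square_mennicken}; well-definedness and ${\rm MS}(1)$ are imported from \cite[Lemma~5.5]{MR244257} and \cite[Theorem~1]{MR843808}, not re-derived from the commutator formulas). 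Your map $(b,a)\mapsto f(b,a)^2$ squares the \emph{value}; for it, ${\rm MS}(2)$ would require $\{b_1b_2,a\}^2=\{b_1,a\}^2\{b_2,a\}^2$, which is not among the available relations and does not follow from them: since $\phi_{\beta}(A)^2=\phi_{\beta}(A^2)$, one computes $\{b,a\}^2=\{b(a+d),\,a^2+bc\}$, which is not visibly $\{b^2,a\}$, and no identity equating the two is established. Consequently your identification of the $t$-bounded subgroup with $Q^2$ is also unjustified; the paper instead works with $G(I):=\langle\{b^2,a\}_I\mid(b,a)\in W(I)\rangle$, which need not coincide with the subgroup of squares.

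The second and larger gap is the index-$\le 2$ step, which is where ${\rm Gen}(2,1)$ actually does its work. You correctly observe that Theorem~\ref{finiteness_mennicken} cannot be reapplied (no ${\rm Exp}(2,\cdot)$ is available), but the proposed ``$\mathbb{Z}/2\mathbb{Z}$-valued reduction of $f$'' is an assertion, not an argument. The paper proves a self-contained statement (Proposition~\ref{technical_square_prop}), adapted from the end of the proof of \cite[Theorem~3.7]{MR244257}: assuming three distinct cosets of $D'$ are hit by $f$, one uses stable range at most $3/2$ together with \cite[Lemmas~2.3, 2.4]{MR244257} to move three representing symbols to a common second coordinate $(c_i d, a)$, invokes ${\rm Gen}(2,1)$ to replace $a$ so that $U(aR)/U(aR)^2$ is cyclic, and then uses the twisted map $h_w(u)=h(wu)$ together with the mixed multiplicativity relation to show the image of $h$ meets at most two cosets of $D'$ — a contradiction. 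Without this mechanism (or a substitute), your step bounding $|Q/Q^2|$ has no proof, so as it stands the proposal establishes neither of the two factors in the bound $2t$.
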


This theorem will be proven by deriving first that a certain subgroup $G(I)$ of $C(C_2,R,I)/E(C_2,R,I)$ is a quotient of the universal Mennicken group $M(I)$ and so $G(I)$ will have at most $t$ many elements according to Theorem~\ref{finiteness_mennicken}. Then we will finish the proof by showing that $G(I)$ has index at most $2$ in $C(C_2,R,I)/E(C_2,R,I)$. First, we need the following proposition collecting a couple of observations from \cite{MR244257}:
 
\begin{proposition}\label{square_mennicken}
Let $R$ be a commutative ring with $1$ and $I$ a non-zero ideal in $R.$ Further, set $Q(I):=C(C_2,R,I)/E(C_2,R,I).$ Then the two maps 
\begin{align*}
&\{\cdot\}_I:W(I)\to Q(I),(b,a)\mapsto 
\phi_{\beta}
\begin{pmatrix}
a & b\\
* & *
\end{pmatrix}
\cdot E(C_2,R,I)=:\{b,a\}_I\text{ and }\\
&[\cdot]_I:W(I)\to Q(I),(b,a)\mapsto\{b^2,a\}_I:=[b,a]_I
\end{align*}
are well-defined and both satisfy ${\rm MS}(1).$ Furthermore, for $(b_1,a),(b_2,a)\in W(I),$ the equation $[b_1,a]_I\cdot\{b_2,a\}_I=\{b_1^2\cdot b_2,a\}_I$ holds. 
This implies in particular that $[\cdot]_I:W(I)\to Q(I)$ is a Mennicken symbol.
\end{proposition}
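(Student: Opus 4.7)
The plan is to verify the four claims of Proposition~\ref{square_mennicken} in the order: well-definedness of both symbols, MS(1) for both, the multiplication identity, and as its algebraic consequence MS(2) for $[\cdot]_I$. Well-definedness of $\{\cdot\}_I$ comes from a matrix identification: any two completions $M, M' \in C(A_1, R, I)$ of $(b, a) \in W(I)$ share the first row $(a, b)$, so $M' M^{-1}$ has first row $(1, 0)$ and determinant $1$; the congruences $M, M' \equiv I_2 \bmod I$ force $M' M^{-1} = \begin{pmatrix} 1 & 0 \\ u & 1 \end{pmatrix}$ with $u \in I$, and its $\phi_\beta$-image is a long-root element of $G(C_2, R)$ with argument in $I$, hence lies in $E(C_2, R, I)$. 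Well-definedness of $[\cdot]_I$ reduces to checking $(b^2, a) \in W(I)$, which is immediate from $b \in I$ and the fact that squaring any identity $ar + bs = 1$ gives $aR + b^2 R = R$.

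For MS(1) of $\{\cdot\}_I$, the easier move $(b, a) \mapsto (b + ya, a)$ with $y \in I$ is handled by right-multiplying the completion by $\begin{pmatrix} 1 & y \\ 0 & 1 \end{pmatrix}$, whose $\phi_\beta$-image is a long-root element with argument in $I$. The subtler move $(b, a) \mapsto (b, a + xb)$ with $x \in R$ requires a conjugated completion: for a completion $M$ of $(b, a)$, a completion of $(b, a + xb)$ in $C(A_1, R, I)$ has the form $\begin{pmatrix} 1 & 0 \\ i - x & 1 \end{pmatrix} M \begin{pmatrix} 1 & 0 \\ x & 1 \end{pmatrix}$ for some $i \in I$. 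After absorbing the $\varepsilon_{\pm \beta}(i)$ factor, the ratio in the quotient is a commutator of $\phi_\beta(M) \in C(C_2, R, I)$ with a long-root element of $G(C_2, R)$; this commutator lies in $E(C_2, R, I)$ by the normality of $E(C_2, R, I)$ together with the ${\rm Sp}_4$ commutator relations of Lemma~\ref{commutator_relations}.

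The main obstacle is the multiplication identity $[b_1, a]_I \{b_2, a\}_I = \{b_1^2 b_2, a\}_I$. My approach is to take completions $M_1, M_2$ of $(b_1, a), (b_2, a)$ in $C(A_1, R, I)$ and exhibit $\phi_\beta(M_1)^2 \phi_\beta(M_2)$ as a completion-image of $(b_1^2 b_2, a)$ modulo $E(C_2, R, I)$; this requires rewriting the square $\phi_\beta(M_1)^2$ inside ${\rm Sp}_4(R)$ using the short-root elements $\varepsilon_\alpha(\cdot), \varepsilon_{\alpha + \beta}(\cdot), \varepsilon_{2\alpha + \beta}(\cdot)$ and the commutator relations of Lemma~\ref{commutator_relations} to absorb the cross-terms. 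The squaring is essential because the unsquared product $\phi_\beta(M_1)\phi_\beta(M_2)$ does not yield a valid completion; this is the ${\rm Sp}_4$ analogue of the matrix identities in \cite[Chapter~12]{MR244257} and is what distinguishes the symplectic case from the cleaner $A_2$ treatment of Proposition~\ref{mennicken_a2}. With the identity in hand, MS(1) for $[\cdot]_I$ under $(b, a) \mapsto (b + ya, a)$ with $y \in I$ follows from MS(1) for $\{\cdot\}_I$ after expanding $(b + ya)^2 = b^2 + a(2yb + y^2 a)$ and noting $2yb + y^2 a \in I$, while the other move $(b, a) \mapsto (b, a + xb)$ requires a direct analogue of the conjugation argument above applied to completions of $(b^2, a)$ and $(b^2, a + xb)$. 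Finally, MS(2) for $[\cdot]_I$ follows from
\begin{equation*}
[b_1 b_2, a]_I = \{b_1^2 b_2^2, a\}_I = [b_1, a]_I \{b_2^2, a\}_I = [b_1, a]_I [b_2, a]_I.
\end{equation*}
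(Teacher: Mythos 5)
Your well-definedness argument is correct and genuinely elementary: two completions of $(b,a)$ in $C(A_1,R,I)$ differ by a lower unipotent with entry in $I$, whose $\phi_\beta$-image is $\varepsilon_{-\beta}(u)$ with $u\in I$, hence lies in $E(C_2,R,I)$ by definition (you should also note, in one line, that a completion in $C(A_1,R,I)$ exists at all). The $y\in I$ move and the formal derivation of MS(2) for $[\cdot]_I$ from the displayed identity are likewise fine. The first genuine gap is your MS(1) step for the move $(b,a)\mapsto(b,a+xb)$ with $x\in R$ arbitrary: after your (correct) reduction, everything hinges on the claim $(\varepsilon_{-\beta}(x),\phi_\beta(M))\in E(C_2,R,I)$ for $M\in{\rm SL}_2(R,I)$, and neither of your cited tools delivers it. Lemma~\ref{commutator_relations} only evaluates commutators of two \emph{root elements}, and $\phi_\beta(M)$ is not one (nor is it a controlled product of root elements with arguments in $I$ -- if it were, the symbol would be trivially well-defined and trivial). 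Normality of $E(C_2,R,I)$ holds by definition, being a normal closure, but it only says that subgroup is conjugation-stable; it does not place commutators of congruence elements with elements of $G(C_2,R)$ inside it. What you actually need is the relative (``standard'') commutator formula $[E(C_2,R),C(C_2,R,I)]\subseteq E(C_2,R,I)$ over an \emph{arbitrary} commutative ring -- a substantive theorem, and exactly the content the paper imports by citing \cite[Lemma~5.5]{MR244257} and \cite[Theorem~1]{MR843808} for well-definedness and MS(1).

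The second gap is worse: the identity $[b_1,a]_I\cdot\{b_2,a\}_I=\{b_1^2b_2,a\}_I$ is the mathematical core of the proposition (the paper's proof is precisely a citation of \cite[Lemma~13.3]{MR244257} for it), and your proposal only announces an Ansatz rather than proving it. Moreover the Ansatz looks wrong: $M_1^2$ has first row $\bigl(a^2+b_1c_1,\ (a+d_1)b_1\bigr)$, so $\phi_\beta(M_1)^2\phi_\beta(M_2)$ is not in any visible way a completion-image of $(b_1^2b_2,a)$, and ``absorbing cross-terms with short-root elements'' is not a computation. In Bass--Milnor--Serre the squares do not arise from squaring the completion; they arise from conjugating $\phi_\beta(M)$ by \emph{short}-root elements, where the $C_2$ commutator formula $(\varepsilon_{\beta}(b),\varepsilon_{\alpha}(t))=\varepsilon_{\alpha+\beta}(\pm tb)\,\varepsilon_{2\alpha+\beta}(\pm t^2b)$ is the mechanism producing $t^2$. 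Relatedly, your treatment of MS(1) for $[\cdot]_I$ under the $x$-move cannot be a ``direct analogue'' of the conjugation argument: $(b^2,a)$ and $(b^2,a+xb)$ are not $I$-equivalent, since the perturbation $xb$ is not a multiple of $b^2$, so this step again requires the symplectic computations you have deferred. As written, the proposal proves the easy parts elementarily but leaves both nontrivial ingredients -- the relative commutator formula and the Lemma~13.3 identity -- unproven and uncited, which is exactly the content the paper's two-line citation proof supplies.
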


\begin{proof}
It follows from \cite[Lemma~5.5]{MR244257} and \cite[Theorem~1]{MR843808} that both functions are well-defined and satisfy the property ${\rm MS}(1).$ Then the fact that the equation $[b_1,a]_I\cdot\{b_2,a\}_I=\{b_1^2\cdot b_2,a\}_I$ holds for all $(b_1,a),(b_2,a)\in W(I)$ is precisely the content of \cite[Lemma~13.3]{MR244257}.
\end{proof}

Next, we define $G(I)$ as the subgroup of $C(C_2,R,I)/E(C_2,R,I)$ generated by the image of the map $[\cdot]_I.$ But then according to Proposition~\ref{square_mennicken}, the map $[\cdot]_I:W_I\to G(I)$ is a Mennicken symbol and so according to Theorem~\ref{finiteness_mennicken} the group $G(I)$ is finite and its order is bounded above by $t.$ Last, we will prove the following proposition:

\begin{proposition}\label{technical_square_prop}
Let $R$ be a commutative ring with $1$ and $I$ a non-zero ideal in $R.$ Assume further that $R$ has stable range at most $3/2$ and satisfies ${\rm Gen}(2,1).$ Last, let $D$ be an abelian group and $f:W(I)\to D$ be a function with the following properties:
\begin{enumerate}
\item{The function $f$ satisfies ${\rm MS}(1)$.}
\item{The map $f$ is surjective.}
\item{The function $g:W(I)\to D,(b,a)\mapsto f(b^2,a)$ satisfies ${\rm MS}(1)$.}
\item{The equation $f(b_1^2,a)\cdot f(b_2,a)=f(b_1^2\cdot b_2,a)$ holds for all $(b_1,a),(b_2,a)\in W_I.$}
\end{enumerate}
Then the subgroup $D'$ of $D$ generated by the image of $g$ in $D$ has index at most $2$ in $D.$ 
\end{proposition}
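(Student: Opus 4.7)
The plan is to show $[D:D']\leq 2$. Since $f\colon W(I)\to D$ is surjective and the quotient map $D\to D/D'$ is surjective, the composition $\bar f\colon W(I)\to D/D'$ is surjective as a map of sets; hence $|D/D'|$ equals the number of distinct values taken by $\bar f$, and it suffices to bound this number by $2$.

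\textbf{First reductions: absorption of unit-squares.} By property (4), for any $(c,a),(b,a)\in W(I)$ we have $f(c^2b,a)=f(c^2,a)f(b,a)$; since $f(c^2,a)\in D'$ this yields $\bar f(c^2b,a)=\bar f(b,a)$ in $D/D'$. Next, writing $a=1+e$ with $e\in I$ (which is forced by $(b,a)\in W(I)$), one observes both (a) that every unit class $\bar u\in U(aR)$ admits a representative in $I$, for which the resulting pair $(c,a)$ is automatically in $W(I)$ (because $c\equiv u\pmod{aR}$ and $\bar u$ is a unit), and (b) that $az\in I$ forces $z\in I$, since $z=(a-e)z=az-ez$ with both summands in $I$. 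Combining these two facts with ${\rm MS}(1)$: whenever $(b_1,a),(b_2,a)\in W(I)$ with $\bar b_1\equiv\bar b_2\bar u^2$ in $U(aR)/U(aR)^2$, we pick $c\in I$ with $\bar c=\bar u$ using (a), write $b_1=b_2c^2+az$, deduce $z\in I$ using (b), and conclude by ${\rm MS}(1)$ and property (4) that $\bar f(b_1,a)=\bar f(b_2c^2,a)=\bar f(b_2,a)$. Thus $\bar f(b,a)$ depends only on the class of $\bar b$ in $U(aR)/U(aR)^2$.

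\textbf{Using ${\rm Gen}(2,1)$.} Given any $(b,a)\in W(I)$, by ${\rm Gen}(2,1)$ we pick $h\in a+bR$ such that $U(hR)/U(hR)^2$ is cyclic of order at most $2$; by ${\rm MS}(1)$, $\bar f(b,a)=\bar f(b,h)$. The preceding paragraph applied to $h$ shows that $\bar f(b,h)$ takes at most two values as $b$ varies with $h$ fixed: the identity of $D/D'$ (when $\bar b\in U(hR)^2$) and a possibly non-trivial element $\beta_h$ (when $\bar b$ is not a unit-square).

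\textbf{Main obstacle: globalisation.} It remains to show that the non-trivial values $\beta_h$ all coincide in $D/D'$ as $h$ ranges over admissible choices. This is the delicate step. Since $\bar f(W(I))=D/D'$ is an abelian group contained in $\{1\}\cup\{\beta_h\}$, both the squares $\beta_h^2$ and the products $\beta_{h_1}\beta_{h_2}$ must again lie in this set, already a strong restriction. The remaining work is to convert this into equality: given two admissible pairs $(b_1,h_1),(b_2,h_2)$ with $\bar b_i$ non-square, one exploits the freedom in ${\rm MS}(1)$ (adjusting $h_i$ within $h_i+b_iR$ and $b_i$ within $b_i+h_iI$) together with step (a) above to bring both pairs to a common admissible $(b_3,h_3)\in W(I)$ via a chain of ${\rm MS}(1)$-equivalences and square-absorptions; by the first reductions applied at $h_3$, this common form already determines $\bar f$ up to two values, forcing $\beta_h^2=1$ and $\beta_{h_1}=\beta_{h_2}$. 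Hence $|\bar f(W(I))|\leq 2$, completing the proof.
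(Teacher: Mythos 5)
Your first two steps are sound: for a fixed modulus $h$, the value $\bar f(b,h)$ in $D/D'$ depends only on the class of $b$ in $U(hR)/U(hR)^2$ (the square-absorption via property (4) and the ${\rm MS}(1)$ bookkeeping both check out, including the observation that $az\in I$ forces $z\in I$ because $a\equiv 1\bmod I$), and together with ${\rm Gen}(2,1)$ this confines $\{\bar f(b,h)\mid b\}$ for that single $h$ to at most two cosets of $D'$. This matches the local computation in the paper's proof, there realized as the identity $h_w(v^2)\cdot h_w(u)=h_w(v^2u)$ for the map $h(x+aR)=f(xd,a)$ twisted by $w=(d')^2d+aR$. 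But your third step, the ``globalisation,'' is exactly the crux, and you have not proved it: you assert that two arbitrary admissible pairs $(b_1,h_1),(b_2,h_2)$ with non-square first coordinates can be brought ``to a common admissible $(b_3,h_3)$ via a chain of ${\rm MS}(1)$-equivalences and square-absorptions,'' but no such chain is constructed, and there is no reason ${\rm MS}(1)$-moves alone connect pairs with unrelated moduli in a general commutative ring. The desired consequences (``forcing $\beta_h^2=1$ and $\beta_{h_1}=\beta_{h_2}$'') are restated, not derived; the ``strong restriction'' from $\bar f(W(I))$ being a group is not enough, since a priori the set of values $\beta_h$ may be large and nothing identifies the nontrivial values across different $h$.

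The telltale sign is that your argument never uses the hypothesis that $R$ has stable range at most $3/2$, which in the paper is precisely what powers this step. The paper argues by contradiction: assuming three distinct cosets with witnesses $(b_i,a_i)$, it fixes a nonzero $d\in I$, uses \cite[Lemma~2.3]{MR244257} (valid because stable range at most $3/2$ means proper quotients of $R$ have stable range $1$) to replace each witness by a $J$-equivalent pair in $W(J)$ with $J=dR$, and then \cite[Lemma~2.4]{MR244257} to bring all three to the form $(c_id,a)$ with a \emph{common} second coordinate $a$. Only then is ${\rm Gen}(2,1)$ invoked, once, at $a$ adjusted modulo $c_1c_2c_3dR$, and the local two-coset bound at this single modulus contradicts the three distinct values. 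To repair your proof you would need to supply this reduction-to-a-common-modulus (or an equivalent mechanism), and that is where the unused stable range hypothesis must enter.
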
 

This result is an adaption of an argument found at the end of the the proof of \cite[Theorem~3.7]{MR244257} to a first order setup:

\begin{proof}
Assume for contradiction that $[D:D']\geq 3.$ Then by the surjectivity of $f,$ we can choose $(b_1,a_1),(b_2,a_2),(b_3,a_3)\in W(I)$ such that $f(b_1,a_1)D',f(b_2,a_2)D'$ and $f(b_3,a_3)D'$ are three distinct cosets of $D'$ in $D.$ Next pick $d\in I$ non-zero and set $J:=dR$. Then one can note that \cite[Lemma~2.3]{MR244257} merely uses that proper quotients of $R$ have stable range $1$ (rather than them being semi-local as stated.) Hence it follows that each element $(b_i,a_i)$ is $J$-equivalent to an element $(b'_i,a'_i)\in W(J).$ But then one can use \cite[Lemma~2.4]{MR244257} to find elements $a,c_1,c_2,c_3\in R$ such that each $(b'_i,a'_i)$ is $J$-equivalent to $(c_i\cdot d,a)\in W(J).$ However, note that $aR+c_1c_2c_3dR=R.$ Thus invoking ${\rm Gen}(2,1)$, we can find $a'\equiv a\text{ mod }c_1c_2c_3dR$ such that $U(a'R)/U(a'R)^2$ is cyclic. To simplify notation assume that $a'=a$. Next, set $U:=U(aR).$ Consider the map
\begin{equation*}
h:U\to D,x+aR\mapsto f(x\cdot d,a).
\end{equation*}
This map is well-defined due to the choice of $a$ and the fact that $f$ satisfies ${\rm MS}(1).$ Next, for any $w\in U,$ consider the map $h_w:U\to D,u\mapsto h(w\cdot u).$ We want to note that $h_w(w^{-1}u)=h(u)$ holds for all $w,u\in U$ and hence the image of $h_w$ and the image of $h$ in $D$ agree. Next, choose a $d'\in R$ such that $d'd\equiv 1\text{ mod } aR$ holds and define $w\in U$ as $w:=(d')^2\cdot d+aR\in U.$ Then observe for $u+aR,v+aR\in U$ that
\begin{align*}
h_w(v^2+aR)\cdot h_w(u+aR)=f(v^2\cdot(d'd)^2,a)\cdot f(u\cdot(d'd)^2,a)=f(v^2u\cdot(d'd)^4,a)
\end{align*} 
holds due to the fourth property assumed for $f$. But further 
\begin{equation*}
v^2u(d'd)^4-v^2u(d'd)^2=v^2u[(d'd)^2-1]\cdot (d'd)^2\in aI
\end{equation*}
holds by choice of $d'$ and so $(v^2u\cdot(d'd)^4,a)$ and $(v^2u\cdot(d'd)^2,a)$ are $I$-equivalent. Thus $f(v^2u\cdot(d'd)^4,a)=f(v^2u\cdot (d'd)^2,a)=h_w(v^2u)$ holds as $f$ satisfies ${\rm MS}(1)$. Hence we have shown 
\begin{equation}\label{almost_hom_eq}
h_w(v^2)\cdot h_w(u)=h_w(v^2\cdot u)
\end{equation}
for all $u,v\in U.$ Next, choose $b\in R$ such that $b+aR$ generates $U/U^2.$ Hence each element $x+aR$ of $U$ has the form $x+aR=b^k\cdot v^2+aR$ for $v+aR\in U(aR)$ and $k\in\{0,1\}.$ Thus invoking (\ref{almost_hom_eq}), we obtain $h_w(x+aR)=h_w(b^k+aR)\cdot h_w(v^2+aR).$ However, $h_w(v^2+aR)=f(v^2\cdot (d'd)^2,a)$ is contained in $D'$. Finally distinguishing, the two cases of $k$, one can see that $h_w(x+aR)$ is an element of the set $(h_w(b+aR)\cdot D')\cup D'.$ So phrased differently, the image of $h_w$ in $D$ is contained in $(h_w(b+aR)\cdot D')\cup D'$. Hence the image of $h_w$ is contained in at most $2$ cosets of $D'$ in $D.$ But as remarked before, the image of $h_w$ agrees with the image of $h.$ But clearly the image of $h$ contains all three elements $f(c_1\cdot d,a)=h(c_1+aR),f(c_2\cdot d,a)=h(c_2+aR)$ and $f(c_3\cdot d,a)=h(c_3+aR)$ and so the image of $h$ intersects with three distinct cosets of $D'$ in $D.$ This contradiction finishes the proof.
\end{proof}

\begin{remark}
Assuming the weaker condition ${\rm Gen}(2,r)$ for $r\geq 1$ instead of ${\rm Gen}(2,1)$, one could derive with essentially the same argument as above that $D'$ has index at most $2^r$ in $D,$ but this sharper result is not necessary for our investigation.
\end{remark}

Finally, Theorem~\ref{Finiteness_congruence_quotient_symplectic} is shown as follows:

\begin{proof}
According to Proposition~\ref{sp_4_decomposition}(2), the map ${\rm SL}_2(R,I)\to C(C_2,R,I)/E(C_2,R,I),A\mapsto\phi_{\beta}(A)$ is surjective. Thus the map $\{\cdot\}_I:W(I)\to C(C_2,R,I)/E(C_2,R,I)$ from Proposition~\ref{square_mennicken} is surjective as well. Furthermore, the claims in Proposition~\ref{square_mennicken} imply that the map $\{\cdot\}_I:W(I)\to C(C_2,R,I)/E(C_2,R,I)$ satisfies all the properties needed from $f$ in Proposition~\ref{technical_square_prop}. Thus $G(I)=\langle\{b^2,a\}_I\mid (b,a)\in W(I)\rangle$ has index $2$ in $C(C_2,R,I)/E(C_2,R,I).$ However, as seen before Theorem~\ref{finiteness_mennicken} implies that $G(I)$ has at most $t$ elements. So in summary, $C(C_2,R,I)/E(C_2,R,I)$ has at most $2t$ elements. This finishes the proof.
\end{proof} 

Last, we derive the main result of this subsection:

\begin{theorem}\label{bound_gen_c2}
Let $K$ be a global function field that is a finite extension of $\mathbb{F}_q(T)$ with ring of integers $R:=\C O_K$ and field of constants $\mathbb{F}_q.$ Also, set $k:=[K:\mathbb{F}_q(T)].$ Further, let $I$ be a non-zero ideal of $R.$
\begin{enumerate}
\item{Then the subgroup $E(C_2,R,I)$ can be boundedly generated by the set $E_I:=\{A\varepsilon_{\phi}(x)A^{-1}\mid A\in G(C_2,R),x\in I,\phi\in C_2\}.$ More precisely, there is a constant $S(q,k,C_2)\in\mathbb{N}$ depending only on $q$ and $k$ such that $\|E(C_2,R,I)\|_{\rm E_I}\leq S(q,k,C_2).$}
\item{Then $E(C_2,R)$ is boundedly generated by root elements. More precisely, there is a constant $L(q,k,C_2)\in\mathbb{N}$ depending only on $q$ and $k$ such that with $|E(C_2,R)|_{\rm EL}\leq L(q,k,C_2).$}
\end{enumerate}
\end{theorem}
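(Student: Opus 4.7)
The plan is to mirror the argument used for Theorem~\ref{A_2_bounded_gen}, replacing the single Mennicken-symbol input from Proposition~\ref{mennicken_a2} with the two-step argument encapsulated in Theorem~\ref{Finiteness_congruence_quotient_symplectic}. The key observation is that all hypotheses of Theorem~\ref{Finiteness_congruence_quotient_symplectic} are first-order properties of $(R,I)$ that we have already shown to hold uniformly for rings of integers of global function fields of characteristic $p$ with bounded data $(q,k)$.

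Concretely, I would first set $t:=2\cdot(q-1)!^{\lceil k\log_2(q)\rceil}$ and verify that the ring $R=\C O_K$ satisfies: stable range at most $3/2$ (standard for Dedekind domains with infinitely many primes), the property ${\rm Gen}(t,1)$ and also ${\rm Gen}(2,1)$ (both by Lemma~\ref{gen_pos_char}, which gives ${\rm Gen}(t,1)$ for every $t\in\mathbb{N}$), and the property ${\rm Exp}(t,2)$ (by Proposition~\ref{exp_positive_char}). Then Theorem~\ref{Finiteness_congruence_quotient_symplectic} gives that $|C(C_2,R,I)/E(C_2,R,I)|\leq 2t$, a bound which depends only on $q$ and $k$ (and not on $K$, $R$, or $I$).

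Next, I would assemble a first-order theory $\C T_{q,k}$ in a language with a unary predicate $\C P$ picking out $I$ and with $n_{C_2}^2=16$ predicates $\C Q_{i,j}$ encoding the set $E_I=\{A\varepsilon_\phi(x)A^{-1}\mid A\in G(C_2,R),\, x\in I,\,\phi\in C_2\}$ (all of which are definable by the first-order descriptions of $G(C_2,\cdot)$ and the root subgroups recalled in Section~\ref{Chevalley}). The theory $\C T_{q,k}$ further asserts stable range at most $3/2$, the properties ${\rm Gen}(2,1)$, ${\rm Gen}(t,1)$, ${\rm Exp}(t,2)$, and that $E_I$ normally generates $E(C_2,R,I)$. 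Every such model satisfies $|C(C_2,R,I)/E(C_2,R,I)|\leq 2t$, so Theorem~\ref{compactness_bounded} yields a uniform constant $S(q,k,C_2)$ with $\|E(C_2,R,I)\|_{E_I}\leq S(q,k,C_2)$, proving claim (1). For claim (2), apply Proposition~\ref{sp_4_decomposition}(1): since $E(C_2,R)$ is normal in $G(C_2,R)$, taking $I=R$ gives $E(C_2,R,R)=E(C_2,R)$ and $C(C_2,R,R)=G(C_2,R)$, and the same compactness argument applied with $\C Q_{i,j}$ now coding the unconjugated set ${\rm EL}=\{\varepsilon_\phi(x)\mid x\in R,\,\phi\in C_2\}$ yields the desired constant $L(q,k,C_2)$.

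The only delicate point is ensuring that all the conditions fed into Theorem~\ref{compactness_bounded} really are first-order and that the bound $2t$ from Theorem~\ref{Finiteness_congruence_quotient_symplectic} is honestly uniform in the model. The former is straightforward because stable range at most $3/2$, ${\rm Gen}(t,1)$, and ${\rm Exp}(t,2)$ are all explicitly written as first-order schemes in Definition~\ref{first_order_def}, and $E(C_2,R,I)$ being normally generated by root elements is an immediate consequence of its definition. The latter is automatic since the proof of Theorem~\ref{Finiteness_congruence_quotient_symplectic} bounds $|C(C_2,R,I)/E(C_2,R,I)|$ solely in terms of $t$. With these ingredients in place, the two claims follow by exactly the compactness transfer used in the $A_2$ case.
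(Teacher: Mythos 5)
Your proposal is correct and follows essentially the same route as the paper: the paper's own proof of Theorem~\ref{bound_gen_c2} simply runs the compactness argument of Theorem~\ref{A_2_bounded_gen} again, with Theorem~\ref{Finiteness_congruence_quotient_symplectic} (fed by stable range at most $3/2$, ${\rm Gen}(2,1)$, ${\rm Gen}(2\cdot(q-1)!^{\lceil k\log_2(q)\rceil},1)$ and ${\rm Exp}(2\cdot(q-1)!^{\lceil k\log_2(q)\rceil},2)$) replacing the Mennicken-symbol input, exactly as you describe. Your handling of claim (2) via Proposition~\ref{sp_4_decomposition}(1) and the unconjugated set ${\rm EL}$ also matches the paper's treatment of the $I=R$ case.
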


\begin{proof}
The proof works essentially the same way as the proof of of Theorem~\ref{A_2_bounded_gen}. The only difference is the use of Theorem~\ref{Finiteness_congruence_quotient_symplectic} to deduce the finiteness of $C(C_2,R,I)/E(C_2,R,I)$ and $G(C_2,R)/E(C_2,R)$ respectively from the conditions ${\rm Exp}(2\cdot(q-1)!^{\lceil k\log_2(q)\rceil},2),{\rm Gen}(2\cdot(q-1)!^{\lceil k\log_2(q)\rceil},1)$ and ${\rm Gen}(2,1)$ satisfied by $R.$ This is then used together with Theorem~\ref{compactness_bounded} to finish the proof.
\end{proof}

\begin{remark}
The proof presented here to derive bounded generation by root elements for $E(C_2,R)$ in case $R$ is the ring of integers in a global function field applies equally well to the case that $R$ is the ring of integers in an algebraic number field as the only properties of $R$ relevant to the proof are satisfied in both cases.
\end{remark}

\subsection{Finishing the proof of Theorem~\ref{main_thm}}

Finishing the proof of Theorem~\ref{main_thm} is facilitated by the following rank reduction step due to Tavgen:

\begin{proposition}\cite[Proposition~1]{MR1044049}\label{Tavgen_reduction}
Let $R$ be a commutative ring with $1$ such that there is a $L(R)\in\mathbb{N}$ such that for each irreducible root system $\Phi_0$ of rank $2$ except $G_2,$ one has
$E(\Phi_0,R)=(U^+(\Phi_0,R)\cdot U^-(\Phi_0,R))^{L(R)}.$ Then $E(\Phi,R)=(U^+(\Phi,R)\cdot U^-(\Phi,R))^{L(R)}$ holds for all irreducible root systems $\Phi$ of rank at least $2$ except $G_2.$ 
\end{proposition}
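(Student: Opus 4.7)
The plan is to proceed by induction on the rank of $\Phi$, with the base case of rank $2$ supplied directly by the hypothesis. For the inductive step, fix $\Phi$ irreducible of rank $n\geq 3$ (the case $\Phi=G_2$ does not occur here), and set $X:=(U^+(\Phi,R)\cdot U^-(\Phi,R))^{L(R)}$. Clearly $X\subset E(\Phi,R)$, and $X$ already contains the identity, $U^+(\Phi,R)$, and $U^-(\Phi,R)$. Since $E(\Phi,R)$ is generated by the root elements $\varepsilon_{\phi}(r)$, all of which lie in $U^+(\Phi,R)\cup U^-(\Phi,R)\subset X$, it suffices to prove that $X$ is closed under left multiplication by each such $\varepsilon_{\phi}(r)$.

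For a positive root $\phi$ this is immediate: $\varepsilon_{\phi}(r)\in U^+(\Phi,R)$ and $U^+(\Phi,R)$ is a subgroup, hence $\varepsilon_{\phi}(r)\cdot(U^+U^-)^{L(R)}\subset(U^+U^-)^{L(R)}=X$. The real work is the analogous closure for a negative root $\phi=-\psi$ with $\psi\in\Phi^+$. Here the plan is to select a simple root $\alpha\in\Pi$ such that $\psi$ and $\alpha$ generate an irreducible rank-$2$ subsystem $\Phi_0$ which is \emph{not} $G_2$; such an $\alpha$ always exists because no irreducible Dynkin diagram of rank at least $3$ contains $G_2$ as a Levi subdiagram, and the long/short length ratios of $F_4$ rule out $G_2$ as a sub-root-system inside $F_4$, so the admissible rank-$2$ subsystems are only $A_1\times A_1$, $A_2$, and $B_2=C_2$. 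The hypothesis applied to $\Phi_0$ then yields $E(\Phi_0,R)=(U^+(\Phi_0,R)\cdot U^-(\Phi_0,R))^{L(R)}$, which lets me rewrite the $\Phi_0$-local piece of the product $\varepsilon_{-\psi}(r)\cdot U^+(\Phi,R)$ as a bounded alternating product of positive and negative $\Phi_0$-unipotents. The commutator relations of Lemma~\ref{commutator_relations}, applied to pairs $(-\psi,\phi')$ with $\phi'\in\Phi^+$ lying outside $\Phi_0$, either vanish (when $-\psi+\phi'\notin\Phi$) or produce higher-height positive root elements that can be absorbed into the next $U^+(\Phi,R)$ block without inflating the number of $U^+U^-$ factors.

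The main obstacle is the bookkeeping needed to guarantee that the same constant $L(R)$ — and not an inflated one — suffices in higher rank. Pushing $\varepsilon_{-\psi}(r)$ past the leading $U^+(\Phi,R)$ factor of $X$ must reduce $\varepsilon_{-\psi}(r)\cdot X$ to an expression lying again in $(U^+U^-)^{L(R)}$, and this forces a telescoping argument in which each invocation of the rank-$2$ identity consumes one block and simultaneously clears another, so that the block count is preserved. Verifying this uniformity, together with checking that the commutators introduced during the rewrite remain inside a single $U^+U^-$ block, is the delicate part; the crucial structural input is that the rank-$2$ hypothesis gives the \emph{same} bound $L(R)$ for every admissible $\Phi_0$, which in turn relies on $G_2$ being excluded both as the ambient $\Phi$ and as any rank-$2$ subsystem appearing during the induction.
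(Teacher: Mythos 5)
Your proposal is incomplete at exactly the point that carries the whole weight of the statement, and you say so yourself: the claim that the block count is preserved under the rewriting is deferred to ``the delicate part,'' but no mechanism in your sketch actually delivers it. Concretely, if you substitute the rank-$2$ identity $E(\Phi_0,R)=(U^+(\Phi_0,R)U^-(\Phi_0,R))^{L(R)}$ into a single $U^+U^-$ block of $X=(U^+(\Phi,R)U^-(\Phi,R))^{L(R)}$, the remaining $L(R)-1$ ambient blocks sit next to $L(R)$ new $\Phi_0$-blocks, and the naive estimate gives $\varepsilon_{-\psi}(r)\cdot X\subseteq (U^+U^-)^{2L(R)-1}$; commutator absorption via Lemma~\ref{commutator_relations} only reshuffles individual root elements and cannot telescope $2L(R)-1$ blocks back down to $L(R)$. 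There is a second problem with the choice of $\Phi_0=\langle\psi,\alpha\rangle$ for an arbitrary positive root $\psi$: your absorption step implicitly needs the unipotents for roots outside $\Phi_0$ to be normalized (or at least coherently permuted) by the $\Phi_0$-elementary subgroup, and this fails in general because $\Phi^+\setminus\Phi_0^+$ carries no grading stable under adding $\Phi_0$-roots unless $\Phi_0$ is generated by \emph{simple} roots of $\Phi$; with $\psi$ non-simple, products $(-\psi)+\phi'$ can land anywhere, and your ``absorb into the next $U^+$ block'' step is unjustified.

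Tavgen's actual proof avoids both issues by running a different induction: from rank $l$ to rank $l-1$, using the two subsystems $\Delta_1,\Delta_2$ generated by deleting an \emph{end node} of the Dynkin diagram (these are irreducible, never $G_2$, and every simple root of $\Phi$ lies in one of them). For $\Delta=\Delta_r$ and $\Sigma=\Phi^+\setminus\Delta^+$, the coefficient of the deleted simple root is positive on $\Sigma$ and unchanged by adding $\Delta$-roots, so $U^{\pm}(\Phi,R)=U^{\pm}_{\Sigma}\rtimes U^{\pm}(\Delta,R)$ with $U^{\pm}_{\Sigma}$ normalized by the \emph{entire} group $E(\Delta,R)$. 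One then shuffles all $\Sigma$-factors out of the product to get, as sets,
\begin{equation*}
(U^+U^-)^{L(R)}=U^+_{\Sigma}\cdot\bigl(U^+(\Delta,R)U^-(\Delta,R)\bigr)^{L(R)}\cdot Z=U^+_{\Sigma}\cdot E(\Delta,R)\cdot Z,
\end{equation*}
where $Z$ is a fixed product of copies of $U^{\pm}_{\Sigma}$ and the last equality is the inductive hypothesis. Now left multiplication by any generator $\varepsilon_{\pm\alpha_k}(\xi)\in E(\Delta,R)$ commutes past $U^+_{\Sigma}$ by normality and is \emph{absorbed by the group} $E(\Delta,R)$; re-expanding $E(\Delta,R)=(U^+(\Delta,R)U^-(\Delta,R))^{L(R)}$ reconstitutes exactly the original set with the same exponent. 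It is this absorb-into-a-group-then-re-expand step --- not commutator bookkeeping --- that preserves $L(R)$, and it is precisely what is missing from your argument. (Your Dynkin-diagram observations are correct but are used for the wrong reduction: what matters is that end-node deletion never produces $G_2$, which is immediate since no irreducible diagram of rank $\geq 3$ contains a triple bond.)
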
 

To finish the proof of Theorem~\ref{main_thm} note first that by Theorem~\ref{A_2_bounded_gen}(2) and Theorem~\ref{bound_gen_c2}(2), there is a constant $L'(q,k)$ only depending on $q$ and $k$ such that 
\begin{align*}
E(\Phi_0,R)=(U^+(\Phi_0,R)\cdot U^-(\Phi_0,R))^{L'(q,k)}
\end{align*}
holds for $\Phi=A_2$ or $C_2$. But as these two root systems are the only irreducible root systems of rank $2$ besides $G_2$, one obtains by invoking Proposition~\ref{Tavgen_reduction} that 
\begin{align*}
E(\Phi,R)=(U^+(\Phi,R)\cdot U^-(\Phi,R))^{L'(q,k)}
\end{align*}
holds for all irreducible root systems of rank at least $2$ besides $G_2.$ But using Matsumoto's \cite[Corollaire~4.6]{MR0240214} together with the triviality of the universal Mennicken group $M(R)$ as established by Bass, Milnor and Serre for rings of integers $R$ of global function fields \cite[Theorem~3.6]{MR244257}, we have $G(\Phi,R)=E(\Phi,R)$ and so the proof is finished excluding the case $\Phi=G_2$. However, the case of $G_2$ is straightforward; the crucial observation here is the following lemma due to Tavgen, which finishes the proof of Theorem~\ref{main_thm} simply by applying the already shown $A_2$-case of it:

\begin{lemma}\cite[Proposition~5]{MR1044049}
Let $K$ be a global function field that is a finite extension of $\mathbb{F}_q(T)$ with field of constants $\mathbb{F}_q$ and ring of integers $R:=\C O_K$. Then each element $A$ of $G_2(R)$ can by multiplication with at most $18$ root elements of $G_2(R)$ be transformed into an element of the subgroup ${\rm SL}_3(R)$ of $G_2(R)$ generated by the set $\{\varepsilon_{\phi}(x)\mid x\in R,\phi\in G_2\text{ long }\}.$ 
\end{lemma}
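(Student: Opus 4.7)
My plan is to use the explicit realization of $G_2(R) \subseteq {\rm GL}_8(R)$ due to Costa--Keller (cited in Section~\ref{Chevalley}) together with the structural fact that the six long roots of $G_2$ form an $A_2$ subsystem, so the long root subgroups generate a copy of ${\rm SL}_3(R)$ inside $G_2(R)$. Hence it suffices to show that every $A \in G_2(R)$ can be right-multiplied by at most $18$ root elements to land inside this long-root ${\rm SL}_3$ subgroup. Note that the hypothesis that $R$ is the ring of integers of a global function field is actually irrelevant here; Tavgen's argument works over any commutative ring with $1$.

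First, I would fix a system of positive roots with simple roots $\alpha$ (short) and $\beta$ (long) and consider the parabolic subgroup $P = L \ltimes U$ of $G_2(R)$ where $L$ is the Levi factor containing the long-root ${\rm SL}_3(R)$ and $U$ is the unipotent radical generated by the positive short root subgroups $\varepsilon_\alpha, \varepsilon_{\alpha+\beta}, \varepsilon_{2\alpha+\beta}$. Using the Bruhat/Gauss decomposition of $A$ relative to $P$, one extracts for each short root $\phi$ a ``short-root coordinate'' $c_\phi(A) \in R$ that vanishes precisely when the $\phi$-part of $A$ is trivial. The target is then to successively kill these six short-root coordinates (three positive short roots, three negative short roots) by right multiplication with short root elements.

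The engine of the reduction is the $G_2$ commutator formula stated in Lemma~\ref{commutator_relations}(4). Ordering the short roots by decreasing height and clearing them one at a time, one checks that for each short root $\phi$ at most three root elements suffice to annihilate $c_\phi$, since the ``spurious'' factors appearing on the right-hand side of
\begin{equation*}
(\varepsilon_\beta(b), \varepsilon_\alpha(a)) = \varepsilon_{\alpha+\beta}(\pm ab)\,\varepsilon_{2\alpha+\beta}(\pm a^2 b)\,\varepsilon_{3\alpha+\beta}(\pm a^3 b)\,\varepsilon_{3\alpha+2\beta}(\pm a^3 b^2)
\end{equation*}
and the other relations of Lemma~\ref{commutator_relations}(4) lie in long-root subgroups or in short-root subgroups corresponding to strictly higher roots than those already processed. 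Thus clearing one short-root coordinate never revives an already-cleared one; the remaining long-root contributions are simply absorbed into the long-root ${\rm SL}_3(R)$ at the end. Summing $3$ root elements per short root over the $6$ short roots yields the bound $18$.

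The main obstacle is to verify carefully that the ordering can be chosen so that the clearing is genuinely ``triangular'' in the short-root coordinates and that each single short-root coordinate can indeed be killed by at most three root elements. This is precisely the explicit case analysis in Tavgen's proof of \cite[Proposition~5]{MR1044049} and requires a tedious but routine bookkeeping of the commutator relations in Lemma~\ref{commutator_relations}(4); there are no ring-theoretic subtleties beyond what is already present in the general $G_2$ structure, which is why the argument transfers verbatim from the number field to the global function field setting.
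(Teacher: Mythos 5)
There is a genuine gap here, on two levels. First, the structural setup you build on does not exist in $G_2$. No proper parabolic subgroup of $G_2$ has a Levi factor containing the long-root ${\rm SL}_3$: the proper parabolics correspond to the subsets $\{\alpha\}$ and $\{\beta\}$ of the simple roots, so their Levi factors have semisimple rank $1$, while the long-root $A_2$-subgroup has rank $2$. Relatedly, the positive short roots $\{\alpha,\alpha+\beta,2\alpha+\beta\}$ are not a closed set ($\alpha+(2\alpha+\beta)=3\alpha+\beta$ and $(\alpha+\beta)+(2\alpha+\beta)=3\alpha+2\beta$ are long), they are not the unipotent radical of any parabolic (the radical of $P_\beta$ also contains the long roots $3\alpha+\beta$ and $3\alpha+2\beta$), and the long-root ${\rm SL}_3$ does not normalize the group they generate, as Lemma~\ref{commutator_relations}(4) already shows. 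Since $G_2/{\rm SL}_3$ is not a flag variety, there is no Bruhat/Gauss decomposition relative to your $P$, so the ``short-root coordinates'' $c_\phi(A)$ that your clearing scheme is supposed to kill are not defined. Even granting some ad hoc coordinates, the triangularity claim breaks once negative short roots enter: for instance $-\alpha+(2\alpha+\beta)=\alpha+\beta$, so clearing the $-\alpha$-coordinate can revive an already-cleared positive short root; there is no global ordering making all six short roots triangular, and the count ``$3$ root elements per short root $\times\, 6 = 18$'' is numerology rather than a consequence of any verified reduction.

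Second, and more fundamentally, your assertion that the hypothesis on $R$ is irrelevant and that ``Tavgen's argument works over any commutative ring with $1$'' is false, and it deletes exactly the content that the lemma and its proof carry. Since the long-root ${\rm SL}_3(R)$ is generated by root elements, the statement over a ring $R$ would force $G_2(R)=E(G_2,R)$ (with a uniform bound on the defect) for \emph{every} commutative ring, whereas already in this paper the equality $G(\Phi,R)=E(\Phi,R)$ is an arithmetic fact obtained from Matsumoto's theorem together with Bass--Milnor--Serre, and unstable $K_1$ of Chevalley groups is nontrivial for suitable rings; bounded reduction of this kind likewise fails over, e.g., polynomial rings in several variables, where bounded elementary generation breaks down. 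Tavgen's proof of \cite[Proposition~5]{MR1044049} is not pure commutator bookkeeping: it manipulates a unimodular column of $A$ in a faithful representation and genuinely uses the Chinese Remainder Theorem and the fact that $R$ is a Dedekind domain to achieve the reduction with boundedly many root elements. The paper's proof of this lemma consists precisely of the observation that these two inputs --- CRT and Dedekind-ness --- are the only ring-theoretic properties Tavgen uses, and that both remain available for $\C O_K$ in a global function field; your write-up dismisses this verification as containing ``no ring-theoretic subtleties,'' when checking it is the entire point of the statement in this setting.
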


\begin{proof}
One merely needs to note that the proof of \cite[Proposition~5]{MR1044049} only uses the Chinese Remainder Theorem and the fact that $R$ is a Dedekind domain rather than the ring of S-algebraic integers in a number field, so the proof still valid in the current setup.
\end{proof}

\section{Strong boundedness in positive characteristic}\label{Strong_bound_section}

The main subject of this section is to prove Theorem~\ref{strong_bound_pos}. In all but two cases, this is straightforward by invoking the following theorem:

\begin{theorem}\cite[Theorem~3.1]{General_strong_bound}\label{strong_bound_original}
Let $\Phi$ be an irreducible root system of rank at least $2$ and let $R$ be a commutative ring with $1$. Additionally, let $G(\Phi,R)$ be boundedly generated by root elements and if $\Phi=C_2$ or $G_2$, then we further assume $(R:2R)<\infty.$ Then there is a constant $C(\Phi,R)\in\mathbb{N}$ such that $\Delta_l(G(\Phi,R))\leq C(\Phi,R)\cdot l$ holds for all $l\in\mathbb{N}.$
\end{theorem}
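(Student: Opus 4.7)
The plan is to bound $\|A\|_T$ uniformly in $l = |T|$ for every normally generating set $T \subseteq G(\Phi, R)$ and every $A \in G(\Phi, R)$. Since $G(\Phi, R)$ is boundedly generated by $N := |G(\Phi, R)|_{EL}$ root elements, each such $A$ is a product of at most $N$ root elements, so it suffices to show $\|\varepsilon_{\phi}(x)\|_T \leq c(\Phi, R) \cdot l$ uniformly in $\phi \in \Phi$, $x \in R$ and $T$.

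First enumerate $T = \{A_1, \ldots, A_l\}$ and attach to each $A_i$ a nonzero level ideal $I_i \subseteq R$: for $\Phi = C_2$ or $G_2$ this is the ideal $l(A_i)$ from Definition~\ref{central_elements_def}, and for the other irreducible $\Phi$ an analogous ideal generated by off-diagonal entries and diagonal differences serves the same purpose. The key technical step is: for every $y \in I_i$ and every root $\phi \in \Phi$, the root element $\varepsilon_{\phi}(y)$ lies in $B_{A_i}(m)$ for some $m = m(\Phi)$ depending only on $\Phi$. This is done by forming iterated commutators $(A_i, \varepsilon_{\psi}(z))$ for suitable $\psi$ and $z$ and invoking the Chevalley commutator relations of Lemma~\ref{commutator_relations}, together with the Weyl-group action through the elements $w_{\alpha}$ defined in Section~\ref{Chevalley}, to transport between root subgroups.

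Next exploit the classification of normal subgroups of $G(\Phi, R)$ by ideals of $R$: the hypothesis $\dl T \dr = G(\Phi, R)$ forces $\sum_{i=1}^l I_i = R$ when $\Phi \notin \{C_2, G_2\}$. Writing $1 = \sum_i x_i$ with $x_i \in I_i$, any $x \in R$ satisfies
\begin{equation*}
\varepsilon_{\phi}(x) = \prod_{i=1}^{l} \varepsilon_{\phi}(x \cdot x_i),
\end{equation*}
and each factor lies in $B_T(m)$ by the previous step, so $\|\varepsilon_{\phi}(x)\|_T \leq lm$. Combining with bounded generation by root elements yields $\Delta_l(G(\Phi, R)) \leq Nm \cdot l$.

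The main obstacle is the exceptional cases $\Phi = C_2$ and $G_2$: the commutator relations of Lemma~\ref{commutator_relations} produce coefficients divisible by $2$ (and by $3$ in the $G_2$ case), so the ideals $I_i$ constructed above only combine to give $\sum_i I_i \supseteq 2R$, and not necessarily $\sum_i I_i = R$. This is exactly where the hypothesis $(R : 2R) < \infty$ enters: one fixes once and for all a finite set of coset representatives of $R/2R$, expresses the finitely many root elements required to cover these classes as bounded products of conjugates of $T$, and absorbs the resulting cost into the constant $C(\Phi, R)$. Verifying uniformity of this correction in $T$, with the bound depending only on $\Phi$ and $R$, is the most delicate piece of the argument.
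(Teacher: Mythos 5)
Your overall skeleton matches the cited proof: attach level ideals to the generators, use the fact that $\dl T\dr=G(\Phi,R)$ forces $\sum_{i}l(A_i)=R$, extract root elements with parameters in these ideals from balls of bounded radius around each $A_i$, combine via a partition of unity $1=\sum_i x_i$ (which is where the factor $l$ enters, linearly), and finish with bounded generation by root elements. Two caveats on your ``key technical step,'' though. First, for $\Phi=C_2$ or $G_2$ the commutator calculus does \emph{not} give $\varepsilon_{\phi}(y)\in B_{A_i}(m)$ for all $y\in l(A_i)$; what is actually available is of the shape of Lemma~\ref{technical_keller_lemma}, i.e.\ special combinations such as $\varepsilon_{2\alpha+\beta}(2x+x^2)\varepsilon_{\alpha+\beta}(x^2)$ for $x\in l(A)$, so only parameters built from $2R$ and squares are reachable --- you correctly anticipated the factor-of-$2$ obstruction. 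Second, even away from the exceptional types, the uniform constant $m(\Phi)$ is obtained in the source by a first-order compactness argument rather than by explicit commutator formulas alone, and since one typically only reaches elements of the form $\varepsilon_{\phi}(ty^2)$, your partition-of-unity step should run through $\sum_i l(A_i)^2=R$ (which does follow from $\sum_i l(A_i)=R$ by comaximality). These points are repairable.

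The genuine gap is the final step of your exceptional case. You propose to fix coset representatives of $R/2R$ once and for all and to ``express the finitely many root elements required to cover these classes as bounded products of conjugates of $T$, absorbing the cost into the constant.'' This is circular: for a fixed element $g$, the norm $\|g\|_T$ depends on $T$, and a bound on $\sup_T\|g\|_T$ over \emph{all} normally generating sets $T$ is precisely the kind of statement the theorem asserts; nothing in your argument supplies it. The actual proof decouples the $T$-dependent cost from the $T$-independent cost by passing through a fixed finite-index normal subgroup, exactly as in the characteristic-$2$ analogue, Proposition~\ref{char2_uniform_boundedness_tech_prop}: one takes $N:=\dl A\dr$ for a single fixed element $A$ assembled from root elements with parameters controlled by $2R$, and shows (i) $\|A\|_T\leq W|T|$ for every normally generating $T$, via the level ideals --- this is the only place $T$ enters, and it enters linearly; (ii) $N$ has finite index in $G(\Phi,R)$ and finite diameter with respect to the \emph{fixed} norm $\|\cdot\|_A$, using bounded generation together with $(R:2R)<\infty$ --- here the finitely many ``leftover'' coset products have bounded $\|\cdot\|_A$-norm simply because they form a finite set and the reference norm does not vary with $T$; and (iii) the image of $T$ normally generates the fixed finite quotient $G(\Phi,R)/N$, so every coset is reached by a number of conjugates bounded in terms of $|G(\Phi,R)/N|$ alone, uniformly in $T$. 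Chaining these gives $\|G(\Phi,R)\|_T\leq M(R)+\|N\|_A\cdot\|A\|_T\leq M(R)+L(R)W|T|$, hence the linear bound on $\Delta_l$. Your sketch is missing precisely this mechanism, and without it the claimed uniformity in $T$ does not follow.
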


As seen by inspecting the proof of Theorem~\ref{strong_bound_original}, the bound $C(\Phi,R)$ only depends on the ring $R$ by way of the bounds for bounded generation by root elements for $G(\Phi,R)$, if $\Phi\neq C_2$ and $G_2$. If $\Phi=C_2$ or $G_2$ and $2$ is a unit in $R,$ then $C(\Phi,R)$ also does not depend on $R$ beyond said bounds. Hence, if the characteristic of the global function field $K$ in question is not $2,$ then Theorem~\ref{strong_bound_pos} follows directly from Theorem~\ref{strong_bound_original} and Theorem~\ref{main_thm}. Thus the only cases left are that $\Phi=C_2$ or $G_2$ and that the global function field has characteristic $2.$

\subsection{The case of $C_2$}

Recall that the root system $C_2$ has eight roots and we can choose psoitive, simple roots $\alpha,\beta\in C_2$ such that $\alpha$ is short and $\beta$ is long and the set of positive roots given by the choice of $\alpha$ and $\beta$ as positive, simple roots are $C_2^+=\{\alpha,\beta,\alpha+\beta,2\alpha+\beta\}.$ For the purpose of this and the next subsection, we also advise the reader to recall the commutator formulas from Lemma~\ref{commutator_relations}.

In the proof of the ${\rm Sp}_4$-case of Theorem~\ref{strong_bound_original}, we demonstrate how to find the set $\{\varepsilon_{\phi}(2x)\mid x\in R,\phi\in C_2\}$ in a small $S$-ball for $S$ a set normally generating ${\rm Sp}_4(R)$ and then finish the proof using bounded generation together with $2R$ having finite index in $R.$ The argument for a global function field of characteristic $2$ is done in a similar manner, but with the ideal $2R$ replaced by a different ideal, the so-called \textit{booleanizing ideal ${\rm vn}_2(R)$} of $R$. 

\begin{proposition}\label{char2_uniform_boundedness_tech_prop}
Let $K$ be a global function field that is a finite extension of $\mathbb{F}_q(T)$ and field of constants $\mathbb{F}_q$ with $q$ a power of $2.$ Further, let $R:=\C O_K$ be the ring of integers of $K$ and let $N$ be the normal subgroup of ${\rm Sp}_4(R)$ generated by 
\begin{equation*}
A:=\varepsilon_{\alpha+\beta}(1)\varepsilon_{2\alpha+\beta}(1).
\end{equation*}
Also let $\|\cdot\|_A:{\rm Sp}_4(R)\to\mathbb{N}_0\cup\{+\infty\}$ be the conjugation generated word norm on ${\rm Sp}_4(R)$ defined as in Defintion~\ref{conjugation_gen_word+basic_def}. Then 
\begin{enumerate}
\item{$N$ is a finite index subgroup of ${\rm Sp}_4(R)$ and}
\item{the norm $\|\cdot\|_A$ has finite diameter on $N$.}
\end{enumerate}
\end{proposition}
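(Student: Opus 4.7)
The plan is to establish (1) by exhibiting a nonzero ideal $I$ of $R$ of finite index such that $E(C_2, R, I) \subset N$; then $[\mathrm{Sp}_4(R):N] \leq [\mathrm{Sp}_4(R):E(C_2, R, I)]$ is finite, because $\mathrm{Sp}_4(R/I)$ is finite and the congruence quotient $C(C_2, R, I)/E(C_2, R, I)$ is finite by Theorem~\ref{Finiteness_congruence_quotient_symplectic}. With such an $I$ in hand, (2) follows by combining Theorem~\ref{bound_gen_c2}(1), which bounds the length of any element of $E(C_2, R, I)$ as a product of conjugated root elements $g\varepsilon_\phi(x)g^{-1}$ with $x\in I$, with explicit bounded-length expressions of single root elements $\varepsilon_\phi(x)$, $x \in I$, as products of $A$-conjugates, plus the observation that $N/E(C_2, R, I)$ is finite and so contributes only finitely many additional coset representatives.

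The heart of the argument is producing root elements in $N$. Applying Lemma~\ref{commutator_relations}(3) to the $C_2$-subsystem generated by the short root $\alpha+\beta$ and the long root $-\beta$, and using that $\varepsilon_{-\beta}(x)$ commutes with $\varepsilon_{2\alpha+\beta}(1)$ (their roots sum to $2\alpha\notin C_2$) and, in characteristic $2$, with $\varepsilon_\alpha$ as well, a short direct calculation yields for every $x \in R$
\begin{equation*}
\varepsilon_{-\beta}(x)\cdot A\cdot \varepsilon_{-\beta}(-x)\cdot A^{-1} \;=\; \varepsilon_\alpha(\pm x)\,\varepsilon_{2\alpha+\beta}(\pm x) \;\in\; N.
\end{equation*}
Iterating with analogous commutators in other $C_2$-subsystems (e.g.\ the one generated by $-\alpha$ and $2\alpha+\beta$) and with Weyl-conjugates of $A$ (which permute roots via $W(C_2)$), one isolates further elements of the form $\varepsilon_\phi(p(x_1,\dots,x_r))$ for various $\phi \in C_2$ and polynomial expressions $p$ in elements of $R$. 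The set of all coefficients arising in such expressions is closed under addition (commuting root elements multiply additively) and under multiplication by arbitrary ring elements (via commutators with $\varepsilon_\psi(y)$), hence generates an ideal $I$. One then checks that $I$ contains the booleanizing ideal $\mathrm{vn}_2(R) := (\{x^2 - x \mid x \in R\})$, the obstruction to going further being precisely the vanishing of $2$-multiples in Lemma~\ref{commutator_relations}(3). Since $R/\mathrm{vn}_2(R)$ is a finite Boolean ring --- a finite product of copies of $\mathbb{F}_2$, one for each prime of $R$ with residue field $\mathbb{F}_2$, which are finite in number --- the ideal $I$ has finite index in $R$, and normality of $N$ in $\mathrm{Sp}_4(R)$ yields $E(C_2, R, I) \subset N$, proving (1).

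For (2), Theorem~\ref{bound_gen_c2}(1) supplies a constant $S(q,k,C_2)$ such that every element of $E(C_2, R, I)$ is a product of at most $S(q,k,C_2)$ conjugates $g\varepsilon_\phi(x)g^{-1}$ with $g \in \mathrm{Sp}_4(R)$, $x \in I$ and $\phi \in C_2$. The explicit formulas from the previous paragraph (together with further commutator manipulations needed to separate $\varepsilon_\alpha$ from $\varepsilon_{2\alpha+\beta}$ into individual root elements with coefficient in $I$) express each $\varepsilon_\phi(x)$ with $x \in I$ as a bounded product of $A$-conjugates, and $g$-conjugation preserves the number of $A$-conjugate factors, so $\|E(C_2, R, I)\|_A$ is bounded. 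Since $N/E(C_2, R, I)$ injects into the finite group $\mathrm{Sp}_4(R)/E(C_2, R, I)$, finitely many coset representatives in $N$ suffice, each writable as a product of $A$-conjugates of bounded length, so $\|\cdot\|_A$ has finite diameter on all of $N$.

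The main obstacle is the extraction step in the second paragraph. In characteristic $2$, many commutator identities from Lemma~\ref{commutator_relations}(3) collapse to the identity (for instance $(\varepsilon_{\alpha+\beta}(1), \varepsilon_\alpha(x)) = \varepsilon_{2\alpha+\beta}(\pm 2x) = 1$), which blocks the direct production of individual root elements from $A$. One must carefully select commutators whose outputs survive in characteristic $2$, and then verify both that the resulting collection of coefficients is closed under ring operations and that the generated ideal contains $\mathrm{vn}_2(R)$; the finiteness of $R/\mathrm{vn}_2(R)$, and hence of $[\mathrm{Sp}_4(R):N]$, is the key technical payoff.
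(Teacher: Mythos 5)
Your proposal is correct in substance, and its computational starting point---the identity $(\varepsilon_{-\beta}(x),A)=\varepsilon_{\alpha}(\pm x)\varepsilon_{2\alpha+\beta}(\pm x)$ and the goal of extracting root elements whose coefficients generate ${\rm vn}_2(R)$---coincides exactly with the paper's. Where you genuinely diverge is in the global assembly. The paper never passes through $E(C_2,R,I)$ or the congruence quotient in this proof: having shown ${\rm vn}_2(R)\subset\varepsilon(A,\phi,18k)$ for all $\phi\in C_2$, it invokes Theorem~\ref{main_thm} (every element of ${\rm Sp}_4(R)$ is a product of $V=8L(q,k)$ root elements), splits each coefficient as $a_i=b_i+x_i$ with $b_i\in I:={\rm vn}_2(R)$ and $x_i$ from a fixed finite set of coset representatives, and rearranges so that $C$ equals a product of $V$ conjugates of root elements with $I$-coefficients (each in $B_A(18k)$) times one of finitely many tail products $\varepsilon_{\phi_1}(x_1)\cdots\varepsilon_{\phi_V}(x_V)$; this one decomposition delivers both the finite index of $N$ and the bound $\|C\|_A\leq 18kV+M$ simultaneously. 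You instead route through $E(C_2,R,I)\subset N$, using Theorem~\ref{bound_gen_c2}(1) for the relative bound $\|E(C_2,R,I)\|_{E_I}\leq S(q,k,C_2)$ and Theorem~\ref{Finiteness_congruence_quotient_symplectic} together with the finiteness of ${\rm Sp}_4(R/I)$ for the index, then handle $N/E(C_2,R,I)$ by finitely many coset representatives. Both routes are valid; yours leans on the heavier machinery already proven in the paper, while the paper's coset-splitting argument needs only the absolute bounded generation theorem and is more self-contained.

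The one place your writeup stays schematic is precisely where the paper does its real work: isolating a \emph{single} root element. The paper's chain is: Weyl-conjugate $\varepsilon_{\alpha}(x)\varepsilon_{2\alpha+\beta}(x)$ to get $\varepsilon_{\alpha}(x)\varepsilon_{-\beta}(x)\in B_A(2)$; commutate $\varepsilon_{\alpha}(y)\varepsilon_{2\alpha+\beta}(y)$ with $\varepsilon_{-(\alpha+\beta)}(x)$ to obtain, up to conjugation and using characteristic $2$ to kill the term $\varepsilon_{-\beta}(2xy)$, an element conjugate to $\varepsilon_{\alpha}(xy)\varepsilon_{-\beta}(x^2y)\in B_A(4)$; multiplying the two then cancels the short-root factor and isolates $\varepsilon_{-\beta}((x^2-x)y)\in B_A(6)$. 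Note the second free variable $y$: it is what makes the coefficient set the full principal ideal $(x^2-x)R$; one-variable commutators with $A$ alone would only yield the additive span of Artin--Schreier elements $y^2-y$, which is not obviously an ideal, so your remark about closure under multiplication by ring elements is essential and must be implemented by exactly such a two-variable commutator. Your ``closed under addition'' step also needs a uniformity argument to produce a single bound: the paper gets it because ${\rm vn}_2(R)$ is generated by $k$ elements of the form $x_i^2-x_i$, giving the bound $18k$. Two smaller points: $\varepsilon_{-\beta}(x)$ commutes with $\varepsilon_{\alpha}(y)$ in every characteristic (since $\alpha-\beta\notin C_2$), so characteristic $2$ is not needed there but rather to suppress the $\pm$ signs and the $2ab$-terms of Lemma~\ref{commutator_relations}(3); and your parenthetical suggestion of working in the subsystem generated by $-\alpha$ and $2\alpha+\beta$ requires care, since commutating an element carrying an $\varepsilon_{\alpha}$-factor against $\varepsilon_{-\alpha}(y)$ involves opposite roots and is not governed by the Chevalley commutator relations.
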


\begin{proof}
First, we will show that 
\begin{equation*}
I:={\rm vn}_2(R):=(x-x^2|x\in R)\subset\varepsilon(A,\phi,18k)
\end{equation*}
for all $\phi\in C_2$ and second, we will deduce the two statements of the proposition from this.
First, observe for any $x\in R$ that
\begin{equation}
\label{char2_uniform_boundedness_eq0}
B_A(2)\ni(\varepsilon_{\alpha+\beta}(1)\varepsilon_{2\alpha+\beta}(1),\varepsilon_{-\beta}(x))=\varepsilon_{\alpha}(x)\varepsilon_{2\alpha+\beta}(x).
\end{equation}
This yields that
\begin{align*}
B_A(2)\ni &w_{\beta}w_{\alpha}w_{\beta}\varepsilon_{\alpha}(x)\varepsilon_{2\alpha+\beta}(x)w_{\beta}^{-1}w_{\alpha}^{-1}w_{\beta}^{-1}\\
&=w_{\beta}w_{\alpha}\varepsilon_{\alpha+\beta}(x)\varepsilon_{2\alpha+\beta}(x)w_{\alpha}^{-1}w_{\beta}^{-1}\\
&=w_{\beta}\varepsilon_{\alpha+\beta}(x)\varepsilon_{\beta}(x)w_{\beta}^{-1}=\varepsilon_{\alpha}(x)\varepsilon_{-\beta}(x).
\end{align*}
On the other hand (\ref{char2_uniform_boundedness_eq0}) implies for $x,y\in R$ that 
\begin{align*}
B_A(4)\ni(\varepsilon_{\alpha}(y)\varepsilon_{2\alpha+\beta}(y),\varepsilon_{-(\alpha+\beta)}(x))
&=(\varepsilon_{2\alpha+\beta}(y),\varepsilon_{-(\alpha+\beta)}(x))^{\varepsilon_{\alpha}(y)}\varepsilon_{-\beta}(2xy)\\
&\sim\varepsilon_{\alpha}(xy)\varepsilon_{-\beta}(x^2y).
\end{align*}
But this implies together with $\varepsilon_{\alpha}(x)\varepsilon_{-\beta}(x)\in B_A(2)$ that 
\begin{equation*}
\varepsilon_{-\beta}(x^2y-xy)=\varepsilon_{-\beta}(x^2y)\varepsilon_{-\beta}(xy)
=\left(\varepsilon_{-\beta}(x^2y)\varepsilon_{\alpha}(xy)\right)\cdot\left(\varepsilon_{\alpha}(xy)\varepsilon_{-\beta}(xy)\right)\in B_A(4+2)=B_A(6).
\end{equation*}
This implies in particular that the ideal $(x^2-x)R$ is contained in $\varepsilon(A,-\beta,6)$ and hence according to \cite[Lemma~4.8(2)]{General_strong_bound}, the inclusion $(x^2-x)R\subset\varepsilon(A,\phi,18)$ holds for all $\phi\in C_2$ and $x\in R.$ But $R$ is the ring of integers in a global function field $K$ and consequently one can check that $k=[K:\mathbb{F}_q(T)]$ elements $x_1^2-x_1,\dots,x_k^2-x_k$ suffice to generate the ideal $I:=(x^2-x\mid x\in R).$ Thus $I\subset\varepsilon(A,\phi,18k)$ holds for all $\phi\in C_2.$ 

Next, note that $I$ has finite index in $R.$ As $R$ is a ring of integers of a global function field, all of its non-zero ideals have finite index. So it suffices to observe that there is an $x\in R$ such that $x^2-x\neq 0,$ say $x=T\in\mathbb{F}_q[T]\subset R.$

Pick a set $X\subset R$ of coset representatives of $I$ in $R.$ According to Theorem~\ref{main_thm}, the group ${\rm Sp}_4(R)$ is boundedly generated by root elements. So each element in ${\rm Sp}_4(R)$ can be written as a product of at most $L(k,q)\cdot 8=:V$ root elements. Let $C\in {\rm Sp}_4(R)$ be given and choose $a_1,\dots,a_V\in R$ as well as $\phi_1,\dots,\phi_V\in C_2$ such that 
\begin{equation*}
C=\prod_{i=1}^V\varepsilon_{\phi_i}(a_i).
\end{equation*}
Each element $a\in R$ can be written as $a=b+x$ for $b\in I$ and $x\in X.$ So choose $x_1,\dots,x_V\in X$ and $b_1,\dots,b_V\in I$ with  
$a_i=x_i+b_i$ for all $i=1,\dots,V.$ Then, we obtain
\begin{align}\label{char2_uniform_boundedness_tech_prop_eq2}
C=\prod_{i=1}^V\varepsilon_{\phi_i}(b_i)\varepsilon_{\phi_i}(x_i)=\varepsilon_{\phi_1}(b_1)\cdot[\prod_{i=2}^V\varepsilon_{\phi_i}(b_i)^{\varepsilon_{\phi_1}(x_1)\cdots\varepsilon_{\phi_{i-1}}(x_{i-1})}]
\cdot[\varepsilon_{\phi_1}(x_1)\cdots\varepsilon_{\phi_V}(x_V)]
\end{align}
But all the elements 
\begin{equation*}
\varepsilon_{\phi_1}(b_1),\{\varepsilon_{\phi_i}(b_i)^{\varepsilon_{\phi_1}(x_1)\cdots\varepsilon_{\phi_{i-1}}(x_{i-1})}\}_{2\leq i\leq V}
\end{equation*}
are elements of $N$ and there are only finitely many possibilities for the product  
\begin{equation*}
\varepsilon_{\phi_1}(x_1)\cdots\varepsilon_{\phi_V}(x_V).
\end{equation*}
Hence $N$ has finite index in ${\rm Sp}_4(R).$ On the other hand, if $C$ is in $N,$ then $\varepsilon_{\phi_1}(x_1)\cdots\varepsilon_{\phi_V}(x_V)$ is also an element of $N$. But there are only finitely many possibilities for $\varepsilon_{\phi_1}(x_1)\cdots\varepsilon_{\phi_V}(x_V)$, so there is an $M\in\mathbb{N}$ such that 
$\|\varepsilon_{\phi_1}(x_1)\cdots\varepsilon_{\phi_V}(x_V)\|_A\leq M$ holds for all the possible products $\varepsilon_{\phi_1}(x_1)\cdots\varepsilon_{\phi_V}(x_V)\in N.$ But we already know that all the elements 
\begin{equation*}
\varepsilon_{\phi_1}(b_1),\{\varepsilon_{\phi_i}(b_i)^{\varepsilon_{\phi_1}(x_1)\cdots\varepsilon_{\phi_{i-1}}(x_{i-1})}\}_{2\leq i\leq V}
\end{equation*}
are elements of $B_A(18k).$ Hence (\ref{char2_uniform_boundedness_tech_prop_eq2}) implies 
\begin{align*}
\|C\|_A=\|\varepsilon_{\phi_1}(b_1)\cdot[\prod_{i=2}^V\varepsilon_{\phi_i}(b_i)^{\varepsilon_{\phi_1}(x_1)\cdots\varepsilon_{\phi_{i-1}}(x_{i-1})}]
\cdot[\varepsilon_{\phi_1}(x_1)\cdots\varepsilon_{\phi_V}(x_V)]\|_A\leq 18kV+M
\end{align*}
and this finishes the proof.
\end{proof}


In order to prove Theorem~\ref{strong_bound_pos} for ${\rm Sp}_4$, we need the following technical lemma essentially due to Costa and Keller \cite{MR1162432}:

\begin{lemma}\cite[Theorem~4.7]{General_strong_bound},\cite[Theorem~2.6, 4.2, 5.1, 5.2]{MR1162432}\label{technical_keller_lemma}
Let $R$ be a commutative ring with $1$ and $A\in{\rm Sp}_4(R)$ be given. Then for each $x\in l(A),$ the product $\varepsilon_{2\alpha+\beta}(2x+x^2)\varepsilon_{\alpha+\beta}(x^2)$ is contained in any normal subgroup of ${\rm Sp}_4(R)$ containing $A.$
\end{lemma}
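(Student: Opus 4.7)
The plan is to show that the set
\begin{equation*}
S := \bigl\{\, x\in R \;:\; \varepsilon_{2\alpha+\beta}(2x+x^2)\varepsilon_{\alpha+\beta}(x^2)\in N\,\bigr\},
\end{equation*}
where $N$ denotes the normal closure $\dl A\dr$ of $A$ in ${\rm Sp}_4(R)$, contains the level ideal $l(A)$. I would first reduce this to showing that $S$ contains each generator $a_{i,j}$ (with $i\neq j$) and each generator $a_{i,i}-a_{j,j}$ of $l(A)$, together with some closure properties under $R$-linear combinations. The maps $\varepsilon_{\alpha+\beta},\varepsilon_{2\alpha+\beta}:(R,+)\to{\rm Sp}_4(R)$ are group homomorphisms with mutually commuting images (since $(\alpha+\beta)+(2\alpha+\beta)=3\alpha+2\beta\notin C_2$), so the non-multiplicativity of $x\mapsto\varepsilon_{2\alpha+\beta}(2x+x^2)\varepsilon_{\alpha+\beta}(x^2)$ reduces to cross terms of the form $\varepsilon_{2\alpha+\beta}(2xy)\varepsilon_{\alpha+\beta}(2xy)$, and these can be absorbed using the commutator identity $(\varepsilon_{\alpha+\beta}(b),\varepsilon_{\alpha}(a))=\varepsilon_{2\alpha+\beta}(\pm 2ab)$ from Lemma~\ref{commutator_relations}(3).

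Second, for each individual generator $x$ of $l(A)$ I would produce the target element as an explicit product of conjugates of $A^{\pm 1}$ and commutators $(A,\varepsilon_\phi(t))$, which always lie in $N$. The correct root $\phi$ and parameter $t$ are dictated by where the generator appears as a matrix entry of $A$ in the standard $4\times 4$ representation of ${\rm Sp}_4$. Evaluating $(A,\varepsilon_\phi(t))$ to linear order in the entries of $A-I$ produces a product of root elements whose parameters are $R$-linear combinations of generators of $l(A)$, and the quadratic $C_2$-commutator $(\varepsilon_\beta(b),\varepsilon_\alpha(a))=\varepsilon_{\alpha+\beta}(\pm ab)\varepsilon_{2\alpha+\beta}(\pm a^2b)$ is precisely what turns a linear parameter $x$ into the combined $(2x+x^2,x^2)$-pair appearing in the statement. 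Applying Weyl group conjugations by the $w_\phi$'s lets one move between root subgroups to match signs and indices.

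The main obstacle is the bookkeeping of signs and of the cross-quadratic terms: since the two $C_2$-commutator relations produce mixtures $\varepsilon_{\alpha+\beta}(\pm ab)\varepsilon_{2\alpha+\beta}(\pm a^2b)$ whose quadratic part depends on the \textit{short} root's parameter, one has to carefully choose which root subgroup each entry of $A$ is fed into so that, after all substitutions and cancellations, exactly $\varepsilon_{2\alpha+\beta}(2x+x^2)\varepsilon_{\alpha+\beta}(x^2)$ remains. This calculation is precisely what Costa and Keller carry out in \cite[Theorems~2.6, 4.2, 5.1, 5.2]{MR1162432}, so I would complete the proof by invoking their identities (as already assembled in \cite[Theorem~4.7]{General_strong_bound}) and checking that the combination of root-element products they produce inside $N$ is, after possibly replacing $x$ by $-x$ (which lies in $l(A)$ together with $x$), exactly the desired element.
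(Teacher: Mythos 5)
The paper gives no independent proof of this lemma: it is quoted directly from \cite[Theorem~4.7]{General_strong_bound}, which in turn assembles the explicit Costa--Keller computations \cite[Theorems~2.6, 4.2, 5.1, 5.2]{MR1162432} --- precisely the references your proposal ultimately invokes, so your argument is correct and coincides with the paper's approach. Your preparatory sketch (the ideal-closure of the set $S$, absorbing the cross terms $\varepsilon_{2\alpha+\beta}(2xy)\varepsilon_{\alpha+\beta}(2xy)$, which the cited commutator $(\varepsilon_{\alpha+\beta}(b),\varepsilon_{\alpha}(a))=\varepsilon_{2\alpha+\beta}(\pm 2ab)$ alone would not handle for the $\alpha+\beta$ component) is exactly the bookkeeping that the cited results already carry out, and becomes moot once they are invoked.
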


Finally, we can prove Theorem~\ref{strong_bound_pos} in the $\Phi=C_2$ and ${\rm char}(K)=2$-case:

\begin{proof}
Let $S$ be a finite, normally generating subset of ${\rm Sp}_4(R)$. To prove Theorem~\ref{strong_bound_original} for ${\rm Sp}_4(R)$, we will proceed in three steps. First, we will show that there is a natural number $M$ independent of $R$ such that 
\begin{equation*}
\|\varepsilon_{\alpha+\beta}(1)\varepsilon_{2\alpha+\beta}(1)\|_S\leq M|S|
\end{equation*}
holds. Second, we will use the second part of Proposition~\ref{char2_uniform_boundedness_tech_prop} to bound the normal subgroup $N$ generated by 
$\varepsilon_{\alpha+\beta}(1)\varepsilon_{2\alpha+\beta}(1)$ with respect to the norm $\|\cdot\|_S.$
Third, we will conclude from the first part of Proposition~\ref{char2_uniform_boundedness_tech_prop} and an argument similar to the proof of \cite[Proposition~3.7]{General_strong_bound} that ${\rm Sp}_4(R)$ is strongly bounded.

For the first step, let $X\in {\rm Sp}_4(R), y\in R$ and $i,j$ distinct elements of $\{1,\dots,4\}$ be given. Then observe that Lemma~\ref{technical_keller_lemma} and
${\rm char}(K)=2$, implies that 
\begin{equation*}
\varepsilon_{\alpha+\beta}(y^2x_{i,j}^2)\varepsilon_{2\alpha+\beta}(y^2x_{i,j}^2)\in\dl X\dr.
\end{equation*}
So using a first-order compactness argument similar to the one in the proof of \cite[Proposition~4.9]{General_strong_bound}, one then proves the existence of a natural number $W$ such that for all $y\in R$ and $X\in{\rm Sp}_4(R)$, one has 
\begin{equation*}
\|\varepsilon_{\alpha+\beta}(y^2x_{i,j}^2)\varepsilon_{2\alpha+\beta}(y^2x_{i,j}^2)\|_X\leq W
\end{equation*}
for all $i,j$ distinct elements of $\{1,\dots,4\}$. Possibly enlarging $W$, one can also show for all $y\in R$ and $X\in{\rm Sp}_4(R)$, that 
\begin{equation*}
\|\varepsilon_{\alpha+\beta}(y^2(x_{i,i}-x_{j,j})^2)\varepsilon_{2\alpha+\beta}(y^2(x_{i,i}-x_{j,j})^2)\|_X\leq W
\end{equation*}
for all $i,j$ distinct elements of $\{1,\dots,4\}.$

Next, observe that $S$ is a normally generating set and hence \cite[Corollary~3.11]{General_strong_bound} implies $\sum_{X\in S}l(X)=R.$ Thus there are elements 
\begin{equation*}
\{y_{ij}^{(X)},z_{ij}^{(X)}\}_{X\in S,1\leq i\neq j\leq 4}\subset R
\end{equation*}
with
\begin{equation*}
1=(\sum_{1\leq i\neq j\leq 4,X\in S}y_{ij}^{(X)}x_{ij})+(\sum_{1\leq i\neq j\leq 4,X\in S}z_{ij}^{(X)}(x_{i,i}-x_{j,j})).
\end{equation*}
But remember that ${\rm char}(K)=2$ and hence Freshman's dream implies
\begin{equation*}
1=(\sum_{1\leq i\neq j\leq 4,X\in S}(y_{ij}^{(X)})^2x_{ij}^2)+(\sum_{1\leq i\neq j\leq 4,X\in S}(z_{ij}^{(X)})^2(x_{i,i}-x_{j,j})^2).
\end{equation*}
Thus, enlarging $W$ again, we obtain 
\begin{align*}
B_S\left(W|S|)\right)\ni&[\prod_{1\leq i\neq j,X\in S}\varepsilon_{\alpha+\beta}((y_{ij}^{(X)})^2x_{ij}^2)\varepsilon_{2\alpha+\beta}((y_{ij}^{(X)})^2x_{ij}^2)]\\
&\ \ \ \cdot[\prod_{1\leq i\neq j,X\in S}\varepsilon_{\alpha+\beta}((z_{ij}^{(X)})^2(x_{ii}-x_{jj})^2)\varepsilon_{2\alpha+\beta}((z_{ij}^{(X)})^2(x_{ii}-x_{jj})^2)]\\
&=\varepsilon_{\alpha+\beta}((\sum_{1\leq i\neq j,X\in S}(y_{ij}^{(X)})^2x_{ij}^2)+(\sum_{1\leq i\neq j,X\in S}(z_{ij}^{(X)})^2(x_{ii}-x_{jj})^2))\\
&\ \ \ \cdot\varepsilon_{2\alpha+\beta}((\sum_{1\leq i\neq j,X\in S}(y_{ij}^{(X)})^2x_{ij}^2)+(\sum_{1\leq i\neq j,X\in S}(z_{ij}^{(X)})^2(x_{ii}-x_{jj})^2))\\
&=\varepsilon_{\alpha+\beta}(1)\varepsilon_{2\alpha+\beta}(1).
\end{align*}
This finishes the first step. So we obtain  
\begin{equation*}
\|\varepsilon_{\alpha+\beta}(1)\varepsilon_{2\alpha+\beta}(1)\|_S\leq W|S|.
\end{equation*}

For the second step, note that the normal subgroup $N$ generated by 
\begin{equation*}
A:=\varepsilon_{\alpha+\beta}(1)\varepsilon_{2\alpha+\beta}(1)
\end{equation*}
is bounded with respect to the norm $\|\cdot\|_A$ according to the second statement of Proposition~\ref{char2_uniform_boundedness_tech_prop}. Setting
$L(R):=\|N\|_A$, this implies
\begin{equation*}
\|N\|_S\leq\|N\|_A\cdot\|A\|_S\leq L(R)W|S|.
\end{equation*}

For the third step, observe that according to the first statement of Proposition~\ref{char2_uniform_boundedness_tech_prop}, the normal subgroup $N$ has finite index in ${\rm Sp}_4(R)$ and ${\rm Sp}_4(R)$ is boundedly generated by root elements. Hence, if we replace in the proof of \cite[Proposition~3.7]{General_strong_bound} the subgroup $N_{C_2}$ by $N$, we can find an $M(R)\in\mathbb{N}$ such that $\|{\rm Sp}_4(R)\|_S\leq M(R)+\|N\|_S$ and hence 
\begin{equation*}
\|{\rm Sp}_4(R)\|_S\leq M(R)+L(R)W|S|.
\end{equation*}
Hence ${\rm Sp}_4(R)$ is strongly bounded and this finishes the proof.
\end{proof}

\subsection{The case of $G_2$}

For this subsection, recall that the root system $G_2$ has $12$ roots and we can choose positive, simple roots $\alpha,\beta\in G_2$ such that $\alpha$ is short and $\beta$ is long and the set of positive roots given by the choice of $\alpha$ and $\beta$ as positive, simple roots is $G_2^+=\{\alpha,\beta,\alpha+\beta,2\alpha+\beta,3\alpha+\beta,3\alpha+2\beta\}.$ We want to also point out that the subset $\{\pm\beta,\pm (3\alpha+\beta), \pm (3\alpha+2\beta)\}$ of $G_2$ gives a root subsystem of $G_2$ consisting of all long roots of $G_2$ and isomorphic to $A_2.$ The proof of Theorem~\ref{strong_bound_pos} in case of $\Phi=G_2$ works in essentially the same manner as in the case of $\Phi=C_2$ before. The only main difference is the manner in which a finite index subgroup of $G_2(R)$ is obtained:

\begin{proposition}\label{char2_uniform_boundedness_tech_prop_g2}
Let $K$ be a global function field that is a finite extension of $\mathbb{F}_q(T)$ and field of constants $\mathbb{F}_q$ with $q$ a power of $2.$ Further, let $R:=\C O_K$ be the ring of integers of $K.$ Further, let $N$ be the normal subgroup of $G_2(R)$ generated by $A:=\varepsilon_{\beta}(1).$ Also let $\|\cdot\|_A:G_2(R)\to\mathbb{N}_0\cup\{+\infty\}$ be the conjugation generated word norm on $G_2(R)$ defined as in Defintion~\ref{conjugation_gen_word+basic_def}. Then 
\begin{enumerate}
\item{$N$ is a finite index subgroup of $G_2(R)$ and}
\item{the norm $\|\cdot\|_A$ has finite diameter on $N$.}
\end{enumerate}
\end{proposition}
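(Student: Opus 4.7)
The plan is to follow Proposition~\ref{char2_uniform_boundedness_tech_prop} almost verbatim: first show that the booleanizing ideal
\[
I := {\rm vn}_2(R) = (x - x^2 \mid x \in R)
\]
is contained in $\varepsilon(A,\phi,C_0)$ for every $\phi \in G_2$ and some universal constant $C_0$; then deduce both claims from this together with Theorem~\ref{main_thm}.

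For the first step I would start from
\[
B_A(2) \ni (\varepsilon_\beta(1), \varepsilon_\alpha(x)) = \varepsilon_{\alpha+\beta}(\pm x)\,\varepsilon_{2\alpha+\beta}(\pm x^2)\,\varepsilon_{3\alpha+\beta}(\pm x^3)\,\varepsilon_{3\alpha+2\beta}(\pm x^3),
\]
which is provided by Lemma~\ref{commutator_relations}(4). In characteristic~$2$ the summands bearing a factor of~$2$ in the $G_2$-commutator formulas vanish, while the $3$-factored terms survive since $3 \neq 0$, so the remaining identities of Lemma~\ref{commutator_relations}(4) still carry enough information to separate the above four factors. By iterating such commutators — commuting with $\varepsilon_{\pm\alpha}(y)$ and $\varepsilon_{-(\alpha+\beta)}(y)$ and conjugating by Weyl elements $w_\phi$ — and mimicking the cascade in the $C_2$ proof that produced $\varepsilon_{-\beta}(x^2 y - xy)$, I aim to produce in a bounded $A$-ball a product that, after repeated use of the commuting of root-element pairs whose root-sum is not in $G_2$ together with Freshman's dream $(u+v)^2 = u^2 + v^2$, collapses to a single factor $\varepsilon_{\phi_0}(x^2 y - xy)$ for some root $\phi_0 \in G_2$ and arbitrary $x,y \in R$. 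Once this is shown for one root, \cite[Lemma~4.8(2)]{General_strong_bound} transports the inclusion $I \subset \varepsilon(A,\phi_0,C_0)$ to every root of $G_2$ at the cost of only a universal multiplicative factor.

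With $I \subset \varepsilon(A,\phi,C_0)$ in hand, the remainder is the telescoping argument of Proposition~\ref{char2_uniform_boundedness_tech_prop}: the ideal $I$ is non-zero (for instance $T - T^2 \in I$) and $R$ is the ring of integers of a global function field, so $R/I$ is finite; fix a finite set $X \subset R$ of coset representatives. By Theorem~\ref{main_thm} every $C \in G_2(R)$ is a product of at most $V := 12 \cdot L(q,k)$ root elements $\varepsilon_{\phi_i}(a_i)$, and splitting $a_i = b_i + x_i$ with $b_i \in I$, $x_i \in X$ and rearranging via
\[
C = \varepsilon_{\phi_1}(b_1) \cdot \Bigl[\prod_{i=2}^{V} \varepsilon_{\phi_i}(b_i)^{\varepsilon_{\phi_1}(x_1)\cdots\varepsilon_{\phi_{i-1}}(x_{i-1})}\Bigr] \cdot \varepsilon_{\phi_1}(x_1) \cdots \varepsilon_{\phi_V}(x_V)
\]
presents $C$ as an element of $N$ times an element of a finite set. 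This simultaneously yields $[G_2(R):N] < \infty$ (proving~(1)) and, by bounding each telescoped factor in $B_A$ using the first step, $\|N\|_A < \infty$ (proving~(2)).

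The main obstacle will be the explicit computation in the first step. Unlike the $C_2$ case, where $(\varepsilon_\beta(b), \varepsilon_\alpha(a))$ yields only two factors that decouple after a single Weyl conjugation, the $G_2$ commutator produces four factors with coefficients $x, x^2, x^3, x^3$ spread over four distinct root directions; choosing a sequence of further commutators that in a bounded number of steps isolates $\varepsilon_{\phi_0}(x^2 - x)$ (or an $R$-multiple thereof) requires careful bookkeeping of which pairs of positive roots have sum outside $G_2$, and, when the sum is in $G_2$, exact tracking of the resulting extra $\varepsilon_{\gamma}$-factor via the $G_2$ relations in Lemma~\ref{commutator_relations}(4).
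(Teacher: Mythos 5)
Your overall architecture is the paper's: reduce everything to showing ${\rm vn}_2(R)\subset\varepsilon(A,\phi,C_0)$ for all $\phi\in G_2$, then run the telescoping argument with Theorem~\ref{main_thm} exactly as in Proposition~\ref{char2_uniform_boundedness_tech_prop}; your second half is correct and essentially verbatim the paper's. But the first half contains a genuine gap, and it sits at the heart of the proof: you never actually produce $\varepsilon_{\phi_0}((x^2-x)y)$ in a bounded $A$-ball — you only announce the intention to ``iterate such commutators'' and ``mimic the cascade'' of the $C_2$ case, and you yourself flag the bookkeeping as the main obstacle. The $C_2$ cascade does not transfer in any routine way: starting from $(\varepsilon_\beta(1),\varepsilon_\alpha(x))$ you face four factors in four root directions, and several of the relevant pairs of positive roots have sums inside $G_2$ (e.g.\ $(\alpha+\beta)+(2\alpha+\beta)=3\alpha+2\beta$), so your appeal to commuting pairs whose root-sum lies outside $G_2$ is insufficient on its own, and no terminating sequence of further commutators and Weyl conjugations is exhibited. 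As written, the key inclusion is conjectured, not proved.

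What fills the gap in the paper are two observations you are missing. First, $\beta$ is a \emph{long} root and the long roots $\{\pm\beta,\pm(3\alpha+\beta),\pm(3\alpha+2\beta)\}$ form an $A_2$-subsystem of $G_2$; hence by \cite[Proposition~4.12(1a)]{General_strong_bound} one already has $\{\varepsilon_{\phi}(x)\mid x\in R,\ \phi\in G_2\text{ long}\}\subset B_A(2)$. This makes every long-root factor — including the $\varepsilon_{3\alpha+\beta}$ and $\varepsilon_{3\alpha+2\beta}$ terms in the commutator expansions, \emph{and} all rearrangement errors, since those also land in long roots — disposable at bounded cost, with no isolation argument needed. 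Second, instead of a cascade, one takes the product of two matched commutators: by Lemma~\ref{commutator_relations}(4), $(\varepsilon_{\beta}(xy),\varepsilon_{\alpha}(1))=\varepsilon_{\alpha+\beta}(xy)\varepsilon_{2\alpha+\beta}(xy)\varepsilon_{3\alpha+\beta}(xy)\varepsilon_{3\alpha+2\beta}(x^2y^2)$ and $(\varepsilon_{\beta}(y),\varepsilon_{\alpha}(x))=\varepsilon_{\alpha+\beta}(xy)\varepsilon_{2\alpha+\beta}(x^2y)\varepsilon_{3\alpha+\beta}(x^3y)\varepsilon_{3\alpha+2\beta}(x^3y^2)$, both in $B_A(4)$; since ${\rm char}(K)=2$ the two $\varepsilon_{\alpha+\beta}(xy)$ factors cancel, and after discarding the long-root factors one gets $\varepsilon_{2\alpha+\beta}((x+x^2)y)\in B_A(12)$ for all $x,y\in R$ in three lines, whence ${\rm vn}_2(R)\subset\varepsilon(A,\phi,24k)$ as in the $C_2$ proof. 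Until you supply either this argument or a worked-out alternative cascade, your first step is a plan rather than a proof.
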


\begin{proof}
First, note that according to \cite[Proposition~4.12(1a)]{General_strong_bound}, we obtain $\{\varepsilon_{\phi}(x)\mid x\in R,\phi\in G_2\text{ long}\}\subset B_A(2).$ Hence for $x,y\in R$ this implies 
\begin{align*}
B_A(4)\ni(\varepsilon_{\beta}(xy),\varepsilon_{\alpha}(1))=\varepsilon_{\alpha+\beta}(xy)\varepsilon_{2\alpha+\beta}(xy)\varepsilon_{3\alpha+\beta}(xy)
\varepsilon_{3\alpha+2\beta}(x^2y^2)
\end{align*}
Similarly, we obtain
\begin{align*}
B_A(4)\ni(\varepsilon_{\beta}(y),\varepsilon_{\alpha}(x))=\varepsilon_{\alpha+\beta}(xy)\varepsilon_{2\alpha+\beta}(x^2y)\varepsilon_{3\alpha+\beta}(x^3y)
\varepsilon_{3\alpha+2\beta}(x^3y^2)
\end{align*}
Thus we obtain as ${\rm char}(K)=2$ that
\begin{align*}
B_A(8)\ni(\varepsilon_{\beta}(xy),\varepsilon_{\alpha}(1))\cdot(\varepsilon_{\beta}(y),\varepsilon_{\alpha}(x))
=\varepsilon_{2\alpha+\beta}((x+x^2)y)\varepsilon_{3\alpha+\beta}(xy+x^3y)\varepsilon_{3\alpha+2\beta}(x^2y^2+x^3y^2)
\end{align*}
But we already know that $\varepsilon_{3\alpha+\beta}(xy+x^3y)\varepsilon_{3\alpha+2\beta}(x^2y^2+x^3y^2)\in B_A(4)$ as both $3\alpha+\beta$ and $3\alpha+2\beta$ are long roots. Hence $\varepsilon_{2\alpha+\beta}((x+x^2)y)\in B_A(12)$ holds for all $x,y\in R.$ But now ${\rm vn}_2(R)=(x^2-x\mid x\in R)\subset\varepsilon(A,\phi,24k)$ for all $\phi\in G_2$ follows from $R$ being the ring of integers of $K$, the same way as in the proof of Proposition~\ref{char2_uniform_boundedness_tech_prop}. The rest of the proof works the same way as the rest of the proof of Proposition~\ref{char2_uniform_boundedness_tech_prop}, so we are going to skip it.  
\end{proof}

Now, we can prove Theorem~\ref{strong_bound_pos} however the proof is virtually identical to the proof in the ${\rm Sp}_4$-case, so we are skipping it, too beyond a short remark: Let $S$ be a finite, normally generating set of $G_2(R).$ Then the only major difference is the use of \cite[Proposition~3.9]{General_strong_bound} instead of Lemma~\ref{technical_keller_lemma} to deduce from $l(S)=R$ that there is a $W$ such that $\varepsilon_{\beta}(1)$ is contained in $B_S(W|S|).$

We also want to note the following corollary of the proof of Theorem~\ref{strong_bound_pos}:

\begin{corollary}\label{characterization_generation}
Let $K$ be a global function field that is a finite extension of $\mathbb{F}_q(T)$ with ring of integers $R:=\C O_K$ and field of constants $\mathbb{F}_q.$ Further, let $T_R$ be the set of all maximal ideals $\mathfrak{P}$ of $R$ with $R/\mathfrak{P}=\mathbb{F}_2$ and let $\Phi=C_2$ or $G_2.$ Then a subset $S$ of $G(\Phi,R)$ normally generates $G(\Phi,R)$ precisely iff the following two conditions hold: First, $l(S)=R$ holds and second $S$ maps to a normally generating set of $\prod_{\mathfrak{P}\in T_R}G(\Phi,R/\mathfrak{P})$ under the map 
\begin{equation*}
G(\Phi,R)\to\prod_{\mathfrak{P}\in T_R}G(\Phi,R/\mathfrak{P}),X\mapsto\prod_{\mathfrak{P}\in T_R}\pi_{\mathfrak{P}}(X).
\end{equation*} 
\end{corollary}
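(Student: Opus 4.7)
The plan is to prove each direction of the equivalence separately; the reverse direction in characteristic~$2$ will be the substantive case.

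For the forward implication, I would argue that if $S$ normally generates $G(\Phi,R)$, then $l(S)=R$ follows at once from \cite[Corollary~3.11]{General_strong_bound}. For condition~(ii), note that $T_R$ is finite (it is nonempty only when the constant field of $K$ is $\mathbb{F}_2$, in which case it consists of finitely many degree-one primes of $R$), so the Chinese Remainder Theorem produces a surjection $R\twoheadrightarrow\prod_{\mathfrak{P}\in T_R}R/\mathfrak{P}$. Because $G(\Phi,-)$ is cut out by polynomial equations over $\mathbb{Z}$, the induced map $\pi$ is a surjective group homomorphism, hence carries the normally generating set $S$ to a normally generating set of the product.

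For the reverse implication, suppose (i) and (ii). If $\mathrm{char}(K)\neq 2$, then $T_R=\emptyset$ (so (ii) is vacuous) and $2\in\mathbb{F}_q\subset R$ is a unit; the argument used in the char-$\neq 2$ branch of the proof of Theorem~\ref{strong_bound_pos} (via Theorem~\ref{strong_bound_original} together with Theorem~\ref{main_thm}) then shows that $l(S)=R$ alone forces $\dl S\dr=E(\Phi,R)=G(\Phi,R)$. Assume henceforth $\mathrm{char}(K)=2$. Repeating the first step of the proof of Theorem~\ref{strong_bound_pos}---namely the derivation combining $l(S)=R$ with Lemma~\ref{technical_keller_lemma} for $\Phi=C_2$ (respectively \cite[Proposition~3.9]{General_strong_bound} for $\Phi=G_2$)---I would show that the distinguished element $A$ of Proposition~\ref{char2_uniform_boundedness_tech_prop} (resp.\ \ref{char2_uniform_boundedness_tech_prop_g2}) lies in $\dl S\dr$. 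Those propositions then give $\dl S\dr\supseteq N:=\dl A\dr\supseteq E(\Phi,R,{\rm vn}_2(R))$.

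Next I would exploit the CRT isomorphism $R/{\rm vn}_2(R)\cong\prod_{\mathfrak{P}\in T_R}\mathbb{F}_2$, which identifies $\ker\pi$ with $C(\Phi,R,{\rm vn}_2(R))$. Together with the surjectivity of $\pi$, hypothesis~(ii) then yields $\pi(\dl S\dr)=\prod_{\mathfrak{P}\in T_R}G(\Phi,\mathbb{F}_2)$, whence $\dl S\dr\cdot C(\Phi,R,{\rm vn}_2(R))=G(\Phi,R)$. To close the argument, I would invoke the Bass--Milnor--Serre-style identity $C(\Phi,R,I)=E(\Phi,R,I)$ for nonzero ideals $I$ of the arithmetic Dedekind domain $R$: for $\Phi=C_2$ this is \cite[Theorem~3.6]{MR244257} combined with Proposition~\ref{technical_square_prop}, while for $\Phi=G_2$ one must cite the analogous classical $K_1$-vanishing. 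Granting $C=E$, the inclusion $\dl S\dr\supseteq E(\Phi,R,{\rm vn}_2(R))=C(\Phi,R,{\rm vn}_2(R))$ combined with the displayed decomposition $\dl S\dr\cdot C=G(\Phi,R)$ gives $\dl S\dr=G(\Phi,R)$. This final $C=E$ identification is the principal obstacle, since without it one only concludes that $\dl S\dr$ has index at most $|C/E|$ in $G(\Phi,R)$.
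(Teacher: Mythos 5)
Your proposal is correct and follows the paper's own proof essentially step for step: \cite[Corollary~3.11]{General_strong_bound} plus the Chinese Remainder Theorem for the forward direction, and for the reverse direction the first step of the proof of Theorem~\ref{strong_bound_pos} together with Propositions~\ref{char2_uniform_boundedness_tech_prop} and \ref{char2_uniform_boundedness_tech_prop_g2}, the isomorphism $R/{\rm vn}_2(R)\cong\prod_{\mathfrak{P}\in T_R}\mathbb{F}_2$ (which the paper justifies by checking that ${\rm vn}_2(R)$ is a radical ideal whose prime divisors are exactly the elements of $T_R$), and the identity $E(\Phi,R,{\rm vn}_2(R))=C(\Phi,R,{\rm vn}_2(R))$. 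The ``principal obstacle'' you flag is resolved in the paper exactly as you anticipate, by citing Bass--Milnor--Serre's solution of the congruence subgroup problem \cite[Theorem~3.6]{MR244257}; only note that your suggested route for $\Phi=C_2$ via Proposition~\ref{technical_square_prop} would merely bound the index of $E$ in $C$ by $2$ rather than prove equality, so one should invoke the Bass--Milnor--Serre triviality of the (symplectic) Mennicke symbol directly, as the paper does.
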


\begin{proof}
First, assume that $S$ normally generates $G(\Phi,R).$ Then the condition $l(S)=R$ follows from \cite[Corollary~3.11]{General_strong_bound}. Second, note that the set $T_R$ is finite: To this end, set $I:={\rm vn}_2(R):=(x^2-x\mid x\in R)$ and let $S_R$ be the set of prime divisors of $I$. For each $\mathfrak{P}$, choose a natural number $a_{\mathfrak{P}}$ such that ${\rm vn}_2(R)=\prod_{\mathfrak{P}\in S_R}\mathfrak{P}^{a_{\mathfrak{P}}}.$ Then the set $T_R$ is finite, if $S_R=T_R$ holds. But the following chain of equivalences proves this claim. Let $J$ be a maximal ideal of $R$. Then:
\begin{align*}
J\in T_R&\iff R/J=\mathbb{F}_2\iff(\forall x\in R:x\in J\text{ or }x-1\in J)\iff(\forall x\in R:x(x-1)\in J)\\
&\iff(I\subset J)\iff J\in S_R.  
\end{align*} 
Hence by the Chinese Remainder Theorem, the group $\prod_{\mathfrak{P}\in T_R}G(\Phi,R/\mathfrak{P})$ is a quotient of $G(\Phi,R)$ and so $S$ maps to a normally generating set of $\prod_{\mathfrak{P}\in T_R}G(\Phi,R/\mathfrak{P})$. 

Second, assume that the two conditions mentioned in the corollary hold. For the sake of brevity, we are going to restrict ourselves to the case of $\Phi=C_2.$ Then as seen in the proof of Theorem~\ref{strong_bound_original} for this case, one can deduce from $l(S)=R$ that $\{\varepsilon_{\phi}(y)\mid y\in I,\phi\in C_2\}$ is contained in the normal subgroup of ${\rm Sp}_4(R)$ normally generated by $S$. But using Bass, Milnor and Serre solution of the congruence subgroup property \cite[Theorem~3.6]{MR244257}, the subgroup normally generated by the set $\{\varepsilon_{\phi}(y)\mid y\in I,\phi\in C_2\}$ is the kernel of the quotient map  
\begin{equation*}
\pi_I:{\rm Sp}_4(R)\to {\rm Sp}_4(R/I),X\mapsto\pi_{I}(X).
\end{equation*} 
Hence it suffices to show that $S$ maps onto a normally generating set of ${\rm Sp}_4(R/I).$ But note that all the $a_{\mathfrak{P}}$ mentioned in the first part of the proof are $1.$ This follows because $I={\rm vn}_2(R)$ is a radical ideal: Assume that $x\in\sqrt{I}$ is given, that is $x^n\in I$ for some $n\in\mathbb{N}.$ But for $n\geq 2,$ the difference $x^{n-1}-x^n$ is an element of $I$ and hence so is $x^{n-1}.$ Thus $x\in I$ follows by induction. But non-zero, radical ideals of a Dedekind domain are divided by their prime divisors precisely once. This finishes the proof.
\end{proof}

\section{Lower bounds on $\Delta_l$}\label{lower_bound_section}

In this section, we prove Corollary~\ref{lower_bounds} and Corollary~\ref{lower_bounds_rank2}. First, we recall the following theorem essentially due to Kedra, Libman and Martin:

\begin{theorem}\cite[Proposition~6.1]{General_strong_bound},\cite[Theorem~6.1]{KLM}\label{lower_bounds_number_fields}
Let $\Phi$ be an irreducible root system of rank at least $2$ and let $R$ be a Dedekind domain with finite class number and infinitely many maximal ideals such that 
$G(\Phi,R)$ is boundedly generated by root elements. Further assume that $2$ is a unit in $R$ if $\Phi=C_2$ or $G_2.$ Then $\Delta_l(G(\Phi,R))\geq l$ for all $l\in\mathbb{N}.$ 
\end{theorem}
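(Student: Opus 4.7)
The plan is to exhibit, for every $l \in \mathbb{N}$, a normally generating set $T = \{t_1,\dots,t_l\} \subset G(\Phi,R)$ of cardinality $l$ together with an element $g \in G(\Phi,R)$ satisfying $\|g\|_T \geq l$. This immediately gives $\Delta_l(G(\Phi,R)) \geq l$.

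Since $R$ has infinitely many maximal ideals, I first pick pairwise distinct maximal ideals $\mathfrak{P}_1,\dots,\mathfrak{P}_l$ of $R$. For each $i$, using the Chinese Remainder Theorem together with the finite class number, I would construct an element $t_i \in G(\Phi,R)$ of the form of a short product of root elements $\varepsilon_{\phi}(y_{i,r})$ with $y_{i,r} \in \bigcap_{j \neq i}\mathfrak{P}_j$, such that $\pi_{\mathfrak{P}_j}(t_i) = 1$ for every $j \neq i$ and such that the level ideal $l(t_i)$ is not contained in $\mathfrak{P}_i$. Coprimality of the $\mathfrak{P}_i$ then forces
\[
l(T) \;\supseteq\; \sum_{i=1}^l\prod_{j \neq i}\mathfrak{P}_j \;=\; R,
\]
since for any maximal ideal $\mathfrak{Q}$ at least one factor $\prod_{j \neq i}\mathfrak{P}_j$ escapes $\mathfrak{Q}$ (take $i$ with $\mathfrak{P}_i = \mathfrak{Q}$ when $\mathfrak{Q} \in \{\mathfrak{P}_1,\dots,\mathfrak{P}_l\}$; otherwise any $i$ works). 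Under the hypotheses of the theorem --- bounded generation by root elements, plus $2 \in R^*$ when $\Phi \in \{C_2, G_2\}$ --- the standard classification of normal subgroups of $G(\Phi,R)$ by level ideals, as used in \cite[Corollary~3.11]{General_strong_bound}, then gives $\dl T \dr = G(\Phi,R)$.

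For the lower bound on the diameter, I would take the witness $g := t_1 t_2 \cdots t_l \in G(\Phi,R)$. Because $\pi_{\mathfrak{P}_i}(t_j) = 1$ for $j \neq i$, we have $\pi_{\mathfrak{P}_i}(g) = \pi_{\mathfrak{P}_i}(t_i)$, which is non-trivial by construction. Now suppose $g = c_1 c_2 \cdots c_k$ with each $c_m$ conjugate in $G(\Phi,R)$ to some $t_{i_m}^{\pm 1}$. The property of having trivial image under $\pi_{\mathfrak{P}_i}$ is preserved by conjugation and inversion, so $\pi_{\mathfrak{P}_i}(c_m) = 1$ whenever $i_m \neq i$. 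If some index $i \in \{1,\dots,l\}$ were missing from $\{i_1,\dots,i_k\}$, then $\pi_{\mathfrak{P}_i}(g) = 1$, contradicting non-triviality. Hence every index in $\{1,\dots,l\}$ must occur, forcing $k \geq l$ and therefore $\|g\|_T \geq l$.

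The main obstacle I expect is the first step: arranging each $t_i$ so that the congruence condition $\pi_{\mathfrak{P}_j}(t_i) = 1$ for $j \neq i$ holds simultaneously with $l(t_i) \not\subset \mathfrak{P}_i$. Without the finite class number hypothesis one cannot in general realise $\prod_{j \neq i}\mathfrak{P}_j$ as a principal ideal and therefore not as a single root element, so the construction proceeds via short products of root elements whose arguments together generate the ideal $\bigcap_{j \neq i}\mathfrak{P}_j$ modulo $\mathfrak{P}_i$. The hypothesis $2 \in R^*$ in the exceptional cases $C_2$ and $G_2$ is exactly what is needed to invoke the normal-subgroup classification in those two cases; everything else is a routine conjugation-invariant counting argument as above.
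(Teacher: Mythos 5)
Your proposal is correct and follows essentially the same route as the paper's source for this theorem (and the paper's own proof of Corollary~\ref{lower_bounds_rank2}): pick $l$ distinct maximal ideals, build generators $t_i$ from root elements with arguments in $\bigcap_{j\neq i}\mathfrak{P}_j$, deduce normal generation from $l(T)=R$ via the level-ideal classification (where $2\in R^*$ covers $C_2,G_2$), and bound $\|t_1\cdots t_l\|_T\geq l$ by counting which residue maps $\pi_{\mathfrak{P}_i}$ kill each conjugate. The only cosmetic difference is that the paper raises $\prod_{j\neq i}\mathfrak{P}_j$ to the class-number power $C$ to make it principal, so a single root element $\varepsilon_{\phi}(y_i)$ suffices per index and your hedge about ``short products'' is unnecessary.
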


However, we know that any ring of integers in a global function field $K$ is a Dedekind domain with finite class number \cite[Lemma~5.6]{MR1876657} and infinitely many maximal ideals and so Corollary~\ref{lower_bounds} follows immediately. 

The proof of Corollary~\ref{lower_bounds_rank2} works in a similar manner to the proof of \cite[Theorem~6.3]{General_strong_bound} but invoking Corollary~\ref{characterization_generation} instead of \cite[Corollary~3.11]{General_strong_bound}. To this end let 
\begin{equation*}
T_R=\{\mathfrak{P}_1,\dots,\mathfrak{P}_{r(R)}\}
\end{equation*}
be the set of prime ideals in $R$ with field of residue $\mathbb{F}_2.$ The second part of Corollary~\ref{lower_bounds_rank2} is proven as follows:
First, note that we have an epimorphism $G(\Phi,R)\to\prod_{\mathfrak{P}\in T_R}G(\Phi,R/\mathfrak{P})=G(\Phi,\mathbb{F}_2)^{r(R)}.$ But \cite[Lemma~6.5]{General_strong_bound} implies that $G(\Phi,\mathbb{F}_2)$ has the quotient $\mathbb{F}_2.$ Hence $G(\Phi,R)$ has the quotient $\mathbb{F}_2^{r(R)},$ which makes it impossible to normally generate $G(\Phi,R)$ with less than $r(R)$ elements. This finishes the proof of the second part of Corollary~\ref{lower_bounds_rank2}.

For the proof of the first part of Corollary~\ref{lower_bounds_rank2}, choose for $l\geq r(R)$ primes $\mathfrak{Q}_{r(R)+1},\dots,\mathfrak{Q}_l$ distinct from the elements of $T_R.$ Next, choose $C$ as the classnumber of $R$ and pick elements $y_1,\dots,y_k$ of $R$ with
\begin{align*}
&y_i\text{ with }y_iR=(\mathfrak{P}_1\cdots\hat{\mathfrak{P}_i}\cdots\mathfrak{P}_{r(R)}\cdot\mathfrak{Q}_{r(R)+1}\cdots\mathfrak{Q}_l)^C\text{ for }1\leq i\leq r(R)\text{ and}\\
&y_i\text{ with }y_iR=(\mathfrak{P}_1\cdots\mathfrak{P}_{r(R)}\cdot\mathfrak{Q}_{r(R)+1}\cdots\hat{\mathfrak{Q}_i}\cdots\mathfrak{Q}_l)^C\text{ for }r(R)+1\leq i\leq l.
\end{align*}
Here the hat denotes the omission of the corresponding prime ideal. Then choose $\phi$ as a short root in $\Phi$ and consider $S:=\{\varepsilon_{\phi}(y_i)\mid 1\leq i\leq l\}.$ We leave it as an exercise to the reader to check using Corollary~\ref{lower_bounds_rank2} that $S$ normally generates $G(\Phi,R)$ and that $\|G(\Phi,R)\|_S\geq l$ holds as the proofs are now virtually identical to the proof of \cite[Theorem~6.3]{General_strong_bound}. These two claims together imply $\Delta_l(G(\Phi,R))\geq\|G(\Phi,R)\|_S\geq l$ and so finish the proof. 

	
\section{Closing remarks}

A couple of remarks are in order. First, concerning Theorem~\ref{main_thm}: Its proof can be easily generalized to also encompass the situation where the ring $\C O_K$ is replaced by one of its localizations. Next, the bounds on bounded elementary generation appearing in Theorem~\ref{main_thm} depend on $q$ and $k=[K:\mathbb{F}_q(T)].$ The dependence on $q$ is almost certainly an artifact of the proof strategy chosen though; for example the partial result by Nica, Theorem~\ref{Nica_thm}, mentioned in the introduction provides a bounded generation result for ${\rm SL}_n(\mathbb{F}_q[T])$ and $n\geq 3$ that does not depend on $q.$ As mentioned though Nicas result are essentially an adaption of arguments by Carter and Keller \cite{MR704220} which are based on power residue symbols. Further, the Carter-Keller-argument depends on the ring being a ring of algebraic integers and especially containing the integers $\mathbb{Z}$ at various places and an adaptation of said argument seems likely to be more involved. In contrast, Morris' argument from \cite{MR2357719} essentially has to be modified at just one place, namely for Proposition~\ref{positive_char_exp_prep}, and then will go through without much further modifications. All that being said, adapting the full argument by Carter and Keller would likely eliminate the dependence on $q$.

But a faithful adaption of the Carter-Keller arguments would probably not manage to remove the dependence on the field $K$ in question. However, this dependence is likely also an artifact: Namely, in the number field case, this dependence on $K$ was eliminated in all cases but the imaginary quadratic field-case by the previously mentioned result concerning bounded elementary generation for ${\rm SL}_2$ due to Rapinchuk, Morgan and Sury \cite{MR3892969} and then using Proposition~\ref{Tavgen_reduction} to generalize it to the higher rank cases. However, these arguments used crucially that the ring of algebraic integers in question had infinitely many units and this is simply not the case for any ring of integers $\C O_K$ in a global function field \cite[Chapter~14, Corollary~1]{MR1876657}. However, I still believe that bounded elementary generation should not depend on $K$ in the function field case: Namely, the study of the congruence kernel by Bass, Milnor and Serre gave the same description for the congruence kernel for both the function field case and the non-imaginary-quadratic number field case.


Second, concerning strong boundedness and Theorem~\ref{strong_bound_pos}: In case of $\Phi\neq C_2$ or $G_2,$ the shape of bounds on $\Delta_l$ is essentially identical to the corresponding results for number fields, Theorem~\ref{alg_integers_strong_bound}, Theorem~\ref{lower_bounds_number_fields} and \cite[Theorem~6.3]{General_strong_bound}, and assuming the dependency of bounded generation by root elements on the field $K$ in question can be eliminated then it will have precisely the same form. More interesting is the case of $\Phi=C_2$ or $G_2$ and $K$ of characteristic $2:$ As we have seen in the proofs of Proposition~\ref{char2_uniform_boundedness_tech_prop} and \ref{char2_uniform_boundedness_tech_prop_g2}, we introduced a dependence on $K$ in the form of the number of generators of the booleanizing ideal ${\rm vn}_2(R)$ of the ring of integers in $K;$ that is a dependence on $k:=[K:\mathbb{F}_q(T)].$ However, while the dependency on $K$ can not be removed entirely, one can by assuming an improved version of Theorem~\ref{main_thm}, modify the proof of Proposition~\ref{char2_uniform_boundedness_tech_prop} and \ref{char2_uniform_boundedness_tech_prop_g2} slightly to make the dependence on $K$ asymptotically unimportant in the sense that $\limsup_{l\to\infty}\Delta_l(G(\Phi,R))/l$ would have an upper bound independent from $K.$ Again, we expect a similar behavior in the number field case.
 
Last, let us very briefly talk about a different application of Theorem~\ref{main_thm} we only learned about recently: A Theorem by Segal and Kent \cite[Theorem~1.1]{segal2021defining} states that for any commutative ring $R$ the group $G(\Phi,R)$ is bi-interpretable with the ring $R$ if the group $E(\Phi,R)$ is boundedly generated by root elements and some additional technical assumptions hold. Here \textit{bi-interpretability} means roughly that group $G(\Phi,R)$ can be described in the first order language of the ring $R$ and the ring $R$ can be described in the first order language of the group $G=G(\Phi,R)$ and said descriptions are in some sense compatible with one another. Thus our Theorem~\ref{main_thm} yields another possible application of Segal and Kent's Theorem.


\bibliography{bibliography}

\def\polhk#1{\setbox0=\hbox{#1}{\ooalign{\hidewidth
  \lower1.5ex\hbox{`}\hidewidth\crcr\unhbox0}}}
  \def\polhk#1{\setbox0=\hbox{#1}{\ooalign{\hidewidth
  \lower1.5ex\hbox{`}\hidewidth\crcr\unhbox0}}}
  \def\polhk#1{\setbox0=\hbox{#1}{\ooalign{\hidewidth
  \lower1.5ex\hbox{`}\hidewidth\crcr\unhbox0}}}
  \def\polhk#1{\setbox0=\hbox{#1}{\ooalign{\hidewidth
  \lower1.5ex\hbox{`}\hidewidth\crcr\unhbox0}}}
  \def\polhk#1{\setbox0=\hbox{#1}{\ooalign{\hidewidth
  \lower1.5ex\hbox{`}\hidewidth\crcr\unhbox0}}}
  \def\polhk#1{\setbox0=\hbox{#1}{\ooalign{\hidewidth
  \lower1.5ex\hbox{`}\hidewidth\crcr\unhbox0}}} \def\cprime{$'$}
\begin{thebibliography}{10}

\bibitem{MR244257}
H.~Bass, J.~Milnor, and J.-P. Serre.
\newblock Solution of the congruence subgroup problem for {${\rm
  SL}_{n}\,(n\geq 3)$} and {${\rm Sp}_{2n}\,(n\geq 2)$}.
\newblock {\em Inst. Hautes \'{E}tudes Sci. Publ. Math.}, (33):59--137, 1967.

\bibitem{MR704220}
David Carter and Gordon Keller.
\newblock Bounded elementary generation of {${\rm SL}_{n}({\mathcal O})$}.
\newblock {\em Amer. J. Math.}, 105(3):673--687, 1983.

\bibitem{MR1611814}
Claude Chevalley.
\newblock Certains sch\'{e}mas de groupes semi-simples.
\newblock In {\em S\'{e}minaire {B}ourbaki, {V}ol. 6}, pages Exp. No. 219,
  219--234. Soc. Math. France, Paris, 1995.

\bibitem{corvaja2021nonvirtually}
Pietro Corvaja, Andrei Rapinchuk, Jinbo Ren, and Umberto Zannier.
\newblock Non-virtually abelian anisotropic linear groups are not boundedly
  generated, 2021.

\bibitem{MR1162432}
Douglas~L. Costa and Gordon~E. Keller.
\newblock Radix redux: normal subgroups of symplectic groups.
\newblock {\em J. Reine Angew. Math.}, 427:51--105, 1992.

\bibitem{MR1487611}
Douglas~L. Costa and Gordon~E. Keller.
\newblock On the normal subgroups of {$G_2(A)$}.
\newblock {\em Trans. Amer. Math. Soc.}, 351(12):5051--5088, 1999.

\bibitem{MR2228948}
Igor~V. Erovenko and Andrei~S. Rapinchuk.
\newblock Bounded generation of {$S$}-arithmetic subgroups of isotropic
  orthogonal groups over number fields.
\newblock {\em J. Number Theory}, 119(1):28--48, 2006.

\bibitem{MR0396773}
James~E. Humphreys.
\newblock {\em Linear algebraic groups, corrected fifth printing}.
\newblock Springer-Verlag, New York-Heidelberg, 1975.
\newblock Graduate Texts in Mathematics, No. 21.

\bibitem{KLM}
Jarek K{\k e}dra, Assaf Libman, and Benjamin Martin.
\newblock On boundedness properties of groups.
\newblock {\em To appear in the Journal of Topology and Analysis}, 2018.

\bibitem{MR0240214}
Hideya Matsumoto.
\newblock Sur les sous-groupes arithm\'etiques des groupes semi-simples
  d\'eploy\'es.
\newblock {\em Ann. Sci. \'Ecole Norm. Sup. (4)}, 2:1--62, 1969.

\bibitem{MR3892969}
Aleksander~V. Morgan, Andrei~S. Rapinchuk, and Balasubramanian Sury.
\newblock Bounded generation of {$\rm SL_2$} over rings of {$S$}-integers with
  infinitely many units.
\newblock {\em Algebra Number Theory}, 12(8):1949--1974, 2018.

\bibitem{MR2357719}
Dave~Witte Morris.
\newblock Bounded generation of {${\rm SL}(n,A)$} (after {D}. {C}arter, {G}.
  {K}eller, and {E}. {P}aige).
\newblock {\em New York J. Math.}, 13:383--421, 2007.

\bibitem{Nica}
B.~Nica.
\newblock On bounded elementary generation for {${\rm SL}_n$} over polynomial
  rings.
\newblock {\em Isr. J. Math.}, pages 403--410, 2018.

\bibitem{MR379441}
Clifford Queen.
\newblock Some arithmetic properties of subrings of function fields over finite
  fields.
\newblock {\em Arch. Math. (Basel)}, 26:51--56, 1975.

\bibitem{MR1876657}
Michael Rosen.
\newblock {\em Number theory in function fields}, volume 210 of {\em Graduate
  Texts in Mathematics}.
\newblock Springer-Verlag, New York, 2002.

\bibitem{segal2021defining}
Dan Segal and Katrin Tent.
\newblock Defining {R} and {G(R)}.
\newblock 2021.

\bibitem{MR3616493}
Robert Steinberg.
\newblock {\em Lectures on {C}hevalley groups}, volume~66 of {\em University
  Lecture Series}.
\newblock American Mathematical Society, Providence, RI, 2016.

\bibitem{MR1044049}
O.~I. Tavgen.
\newblock Bounded generability of {C}hevalley groups over rings of
  {$S$}-integer algebraic numbers.
\newblock {\em Izv. Akad. Nauk SSSR Ser. Mat.}, 54(1):97--122, 221--222, 1990.

\bibitem{General_strong_bound}
Alexander Trost.
\newblock Strong boundedness of {s}plit {C}hevalley groups.
\newblock {\em To appear in the Israel Journal of Mathematics}, 2021.

\bibitem{MR843808}
Leonid~N. Vaserstein.
\newblock On normal subgroups of {C}hevalley groups over commutative rings.
\newblock {\em Tohoku Math. J. (2)}, 38(2):219--230, 1986.

\end{thebibliography}
\bibliographystyle{plain}

\end{document}